\numberwithin{equation}{section}
\theoremstyle{plain}
\newtheorem{Theorem}[equation]{Theorem}
\newtheorem{Corollary}[equation]{Corollary}
\newtheorem{Lemma}[equation]{Lemma}
\theoremstyle{definition}
\newtheorem{Remark}[equation]{Remark}
\theoremstyle{definition}
\newtheorem{Definition}[equation]{Definition}
\def\Xint#1{\mathchoice
{\XXint\displaystyle\textstyle{#1}}%
{\XXint\textstyle\scriptstyle{#1}}%
{\XXint\scriptstyle\scriptscriptstyle{#1}}%
{\XXint\scriptscriptstyle\scriptscriptstyle{#1}}%
\!\int}
\def\XXint#1#2#3{{\setbox0=\hbox{$#1{#2#3}{\int}$}
\vcenter{\hbox{$#2#3$}}\kern-.5\wd0}}
\def\dashint{\Xint-}
\newcommand{\R}{\mathbb{R}}
\newcommand{\C}{\mathbb{C}}
\newcommand{\N}{\mathbb{N}}
\newcommand{\Z}{\mathbb{Z}}
\newcommand{\E}{\mathbb{E}}
\newcommand{\calS}{\mathcal{S}}
\newcommand{\bla}{\big \langle}
\newcommand{\bra}{\big \rangle}
\numberwithin{equation}{section}
\newcommand{\esssup}[0]{\operatornamewithlimits{ess\,sup}}
\newcommand{\supp}[0]{\operatorname{supp}}
\newcommand{\loc}[0]{\operatorname{loc}}
\newcommand{\ch}[0]{\operatorname{ch}}
\newcommand{\calD}[0]{\mathcal{D}}
\newcommand{\re}{\mathbb{R}}
\DeclareMathOperator*{\essinf}{ess\,inf}
\begin{document}

\title[End-point extrapolation and applications]{End-point estimates, extrapolation for multilinear Muckenhoupt classes, and applications}

\author[K. Li]{Kangwei Li}
\address{Kangwei Li, \textit{Current address:} Center for Applied Mathematics, Tianjin University, Weijin Road 92, 300072 Tianjin, China}
\address{and}
\address{BCAM, Basque Center for Applied Mathematics
	\\
	Mazarredo 14
	\\
	E-48009 Bilbao, Spain}
\email{kli@tju.edu.cn}

\author[J.M. Martell]{Jos\'e Mar{\'\i}a Martell}
\address{Jos\'e Mar{\'\i}a Martell
	\\
	Instituto de Ciencias Matem\'aticas CSIC-UAM-UC3M-UCM
	\\
	Consejo Superior de Investigaciones Cient\'ificas
	\\
	C/ Nicol\'as Cabrera, 13-15
	\\
	E-28049 Madrid, Spain}

\email{chema.martell@icmat.es}

\author[H. Martikainen]{Henri Martikainen}

\address{Henri Martikainen
\\
Department of Mathematics and Statistics
\\
University of Helsinki
\\
P.O.B. 68, FI-00014 University of Helsinki, Finland}
\email{henri.martikainen@helsinki.fi}

\author[S. Ombrosi]{Sheldy Ombrosi}
\address{Sheldy Ombrosi
	\\
	Department of Mathematics
	\\
	Universidad Nacional del Sur
	\\
	Bah\'ia Blanca, Argentina}
\email{sombrosi@uns.edu.ar}

\author[E. Vuorinen]{Emil Vuorinen}
\address{Emil Vuorinen
	\\
	Centre for Mathematical Sciences
	\\
	University of Lund
	\\
	P.O.B. 118, 22100 Lund, Sweden}
\email{j.e.vuorin@gmail.com}
\date{February 12, 2019. \textit{Revised:} \today}

\subjclass[2010]{42B20, 42B25, 42B35}

\keywords{Multilinear Muckenhoupt weights, Rubio de Francia extrapolation, multilinear Calder\'on-Zygmund operators, bilinear Hilbert transform, vector-valued inequalities, mixed-norm estimates}

\begin{abstract}
In this paper we present the results announced in the recent work by the first, second, and fourth authors of the current paper concerning Rubio de Francia extrapolation for the so-called multilinear Muckenhoupt classes. Here we consider the situations where some of the exponents of the Lebesgue spaces appearing in the hypotheses and/or in the conclusion can be possibly infinity. The scheme we follow is similar, but, in doing so, we need to develop a one-variable end-point off-diagonal extrapolation result. This complements the corresponding ``finite'' case obtained by Duoandikoetxea,  which was one of the main tools in the aforementioned paper. The second goal of this paper is to present some applications.   For example, we obtain the full range of mixed-norm estimates for tensor products of bilinear Calder\'on-Zygmund operators with a proof based on extrapolation and on some estimates with weights in some mixed-norm classes. The same occurs with the multilinear Calder\'on-Zygmund operators, the bilinear Hilbert transform, and the corresponding commutators with BMO functions. Extrapolation along with the already established weighted norm inequalities easily give scalar and vector-valued inequalities with multilinear weights and these include the end-point cases.
\end{abstract}

\maketitle

\section{Introduction}
Recently, the first, second, and fourth authors of the present paper solved in \cite{LMO} a long standing problem about the extrapolation theorem for multilinear  Muckenhoupt classes of weights. A particular and simplified version of the general result established there is the following: suppose that a multivariable operator $T$  satisfies
\[
\|T(f_1,\dots,f_m) w\|_{L^p}
\lesssim
\|f_1 w_1\|_{L^{p_1}}\dots \|f_m w_m\|_{L^{p_m}}
\]
for some fixed $1\le p_1,\dots, p_m<\infty$, and for all $\vec{w}=(w_1,\dots, w_m)\in A_{\vec{p}}$ (see Section \ref{sec:multi}), where $\frac1p=\frac1{p_1}+\dots+\frac1{p_m}$ and $w=\prod_{i=1}^m w_i$. Then, one gets the same kind of estimates for all $1< p_1,\dots,  p_m<\infty$, and for all $\vec{w}=(w_1,\dots,w_m)\in A_{\vec{p}}$. The one-variable case (i.e., $m=1$) is the well-known Rubio de Francia extrapolation theorem \cite{RdF} of which  one can find a great deal of extensions and refinements which are adapted to various settings and situations  (see \cite{CMP}).

The first result in the multivariable case \cite{GM}, due to the second author of this paper and Grafakos, was obtained under the assumption that  every individual weight is in the corresponding Muckenhoupt class. This collection of weights  is however too  small and the paper \cite{LOPTT}, coauthored by the fourth author of this paper together with Lerner, P\'erez, Torres, and Trujillo-Gonz\'alez, introduced and studied the classes $A_{\vec{p}}$. In these, rather than looking at the weights in the $m$-tuple individually, it is assumed that they collectively satisfy a Muckenhoupt-type condition. These are in turn the natural classes for the multilinear Calder\'on-Zygmund operators in the same way that the classical $A_p$ classes are the natural ones for the linear Calder\'on-Zygmund operators.

Since the appearance of \cite{LOPTT}, an associated Rubio de Francia extrapolation theory for the classes $A_{\vec{p}}$ has been sought until recently when the aforementioned paper \cite{LMO} solved that problem, and even more, obtained extrapolation results for the more general classes $A_{\vec{p},\vec{r}}$ (see Section \ref{sec:multi}). That paper, however, considered only the cases in which all the exponents are finite and it was announced that one could consider situations in which some of the exponents $p_i$'s and/or the $q_i$'s are infinity. While these end-point scenarios are not natural in the one-variable extrapolation theory, it turns out that for some relevant multilinear applications it is of interest to extend the extrapolation theory so that it includes the end-point cases. The main goal of this paper is to present in a rigorous way these end-point extrapolation results originally announced in \cite{LMO}. Since the statement of our main extrapolation result for the classes $A_{\vec{p},\vec{r}}$ (see Theorem \ref{theor:extrapol-general}) requires some notation, we postpone that until Section \ref{sec:multi}. However, we would like to single out the particular case of the original classes $A_{\vec{p}}$, which is of particular interest and also gives an-easy-to-digest illustration of the general case.

\begin{Theorem}\label{theor:extrapol-general:CZO}
	Let $m\ge 2$ and $T$ be an $m$-variable operator. Given $\vec p=(p_1,\dots, p_m)$ with $1\le p_1,\dots, p_m\le \infty$, let $\frac1p=\frac1{p_1}+\dots+\frac1{p_m}\in[0,m]$. Assume that given any $\vec w=(w_1,\dots, w_m) \in A_{\vec p}$ the inequality
	\begin{equation}\label{extrapol:H*}
	\|T(f_1,\dots, f_m)w\|_{L^{p}} \lesssim  \prod_{i=1}^m\|f_iw_i\|_{L^{p_i} }
	\end{equation}
	holds for every $(f_1,\dots, f_m)\in \mathbb{F}$ (a fixed collection of $m$-tuples of measurable functions), where $w:=\prod_{i=1}^m w_i $. Then for all exponents $\vec q=(q_1,\dots,q_m)$, with $1< q_1,\dots, q_m\le \infty$ and $\frac1q=\frac1{q_1}+\dots+\frac1{q_m}>0$, and for all weights $\vec v=(v_1,\dots, v_m) \in A_{\vec q}$ the inequality
	\begin{equation}\label{extrapol:C*}
	\|T(f_1,\dots, f_m) v\|_{L^{q} } \lesssim \prod_{i=1}^m \|f_iv_i\|_{L^{q_i} }
	\end{equation}
	holds for every $(f_1,\dots,f_m)\in \mathbb F$, where $v:=\prod_{i=1}^m v_i $.
	
	Moreover, for the same family of exponents and
	weights, and for all exponents $\vec{s}=(s_1,\dots, s_m)$ with $1< s_1,\dots, s_m\le \infty$ and $\frac1s=\frac1{s_1}+\dots+\frac1{s_m}>0$ the inequality
	\begin{equation}\label{extrapol:vv*}
	\Big\|\big\{T(f_1^j,\dots, f_m^j) v\big\}_j\Big\|_{L^{q}_{\ell^s}}
	\lesssim
	\prod_{i=1}^m\Big\|\big\{f_i^j v_i\big\}_j\Big\|_{L^{q_i}_{\ell^{s_i}} }
	\end{equation}
	holds for all sequences $\{(f_1^j, \dots, f_m^j)\}_j\subset\mathbb{F}$.
\end{Theorem}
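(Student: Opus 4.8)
The plan is to reduce both the scalar inequality \eqref{extrapol:C*} and the vector-valued inequality \eqref{extrapol:vv*} to the general multilinear extrapolation machinery for the classes $A_{\vec p, \vec r}$ developed in \cite{LMO}, once the latter is extended to allow end-point exponents via the one-variable end-point off-diagonal extrapolation result announced in the introduction. Concretely, I would first observe that $A_{\vec p}$ is the special case $\vec r = (1,\dots,1)$ of $A_{\vec p, \vec r}$, so Theorem \ref{theor:extrapol-general:CZO} is the $\vec r = (1,\dots,1)$ instance of the forthcoming Theorem \ref{theor:extrapol-general}; thus the real content is to verify that the hypotheses of that general theorem are met and that the general theorem itself accommodates the possibility $p_i = \infty$ or $q_i = \infty$. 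The key point is that the pair of exponent tuples $(\vec p, \vec q)$ with $1 \le p_i \le \infty$, $1 < q_i \le \infty$, $\tfrac1p > 0$, $\tfrac1q > 0$ is admissible in the sense required by the extrapolation scheme: when some $p_i = \infty$ the corresponding ``weight'' component plays the role of an $A_\infty$-type weight and the local testing is against $L^\infty$ rather than $L^{p_i}$, which is exactly the scenario handled by the end-point one-variable off-diagonal extrapolation complementing Duoandikoetxea's finite case.

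Next I would carry out the deduction of \eqref{extrapol:C*} from \eqref{extrapol:H*}. Fix $\vec q$ and $\vec v \in A_{\vec q}$ as in the statement. Following the Rubio de Francia algorithm adapted to the multilinear setting (as in \cite{LMO}), one builds, for each $i$, an iterated maximal-function sum producing a weight $W_i$ comparable to $v_i$, with controlled $A_\infty$ constant, so that the tuple $(W_1,\dots,W_m)$ lies in $A_{\vec p}$; applying \eqref{extrapol:H*} to this tuple and then using Hölder's inequality with the change of exponents $\tfrac{1}{p_i} \to \tfrac{1}{q_i}$ yields \eqref{extrapol:C*}. The genuinely new step relative to \cite{LMO} is the bookkeeping when $p_i = \infty$ and/or $q_i = \infty$: in those coordinates the Rubio de Francia iteration degenerates (one simply keeps $W_i = v_i$, since no gain in the $L^\infty$ slot is needed), and one must check that this does not disturb the $A_{\vec p}$ membership of the full tuple — this is where the end-point version of the one-variable off-diagonal extrapolation, applied coordinate-wise after freezing the other variables, does the work. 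The duality/freezing argument that converts the $m$-linear statement into $m$ separate one-variable statements is identical to the one in \cite{LMO}; only the one-variable input changes.

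For the vector-valued inequality \eqref{extrapol:vv*} I would use the standard trick of encoding $\ell^s$-valued functions as scalar functions on a product measure space: replace $\mathbb{R}^n$ by $\mathbb{R}^n \times \mathbb{N}$ with $\mu \otimes (\text{counting measure})^{?}$ — more precisely, one passes to mixed-norm Lebesgue spaces and notes that the whole extrapolation scheme, being built only from the Rubio de Francia algorithm, Hölder's inequality, and the boundedness of the (iterated) Hardy–Littlewood maximal operator, transfers verbatim to the $\ell^s$-valued setting once one has the vector-valued maximal inequalities (Fefferman–Stein) in the relevant weighted mixed-norm form. Alternatively, and more in the spirit of \cite{CMP}, \eqref{extrapol:vv*} follows formally from the scalar conclusion \eqref{extrapol:C*} by applying it to the operator $\vec f \mapsto \big\|\{T(f_1^j,\dots,f_m^j)\}_j\big\|_{\ell^s}$ and to the collection $\mathbb{F}$ enlarged to include all $\ell^{s_i}$-valued tuples — the key being that once \eqref{extrapol:C*} holds for \emph{all} admissible $\vec q$ and all $\vec v \in A_{\vec q}$, a second application of the extrapolation principle (now with exponents $s_i$) upgrades it to the vector-valued statement; this is precisely how vector-valued bounds are extracted from extrapolation in the linear theory, and the multilinear analogue is carried out in \cite{LMO}.

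The main obstacle, in my view, is not the structure of the argument — which parallels \cite{LMO} closely — but the careful treatment of the end-point coordinates: one must make sure that the class $A_{\vec p}$ is correctly interpreted when some $p_i = \infty$ (so that the $i$-th averaging is an essential supremum and the corresponding Muckenhoupt exponent collapses), that the Rubio de Francia iteration still yields a tuple in $A_{\vec p}$ with constant depending only on the $A_{\vec q}$ constant of $\vec v$, and that the one-variable end-point off-diagonal extrapolation is applicable in each frozen variable with uniform constants. This is exactly the gap that Duoandikoetxea's finite off-diagonal result left open and that the present paper's one-variable end-point theorem is designed to fill; once that lemma is in hand, the multilinear end-point extrapolation — and hence \eqref{extrapol:C*} and \eqref{extrapol:vv*} — follows by the same scheme as in the finite case.
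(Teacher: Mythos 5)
Your first paragraph captures exactly what the paper does: Theorem~\ref{theor:extrapol-general:CZO} is simply the $\vec r=(1,\dots,1)$ instance of Theorem~\ref{theor:extrapol-general}, and the genuinely new ingredient is the one-variable end-point off-diagonal extrapolation (Theorem~\ref{thm:offdiagonal}). So the overall approach matches the paper.

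One caution about the sketch in your second paragraph. You describe building, for each $i$ simultaneously, an iterated maximal-function weight $W_i$ ``comparable to $v_i$'' so that $(W_1,\dots,W_m)\in A_{\vec p}$, and then applying the hypothesis once. That is not how the multilinear extrapolation actually runs, and the one-shot version would not close: the class $A_{\vec p}$ is a joint condition on the whole tuple, so manufacturing the $W_i$ independently does not keep the tuple in $A_{\vec p}$. What the paper (following \cite{LMO}) does is change \emph{one exponent at a time}: Lemma~\ref{lemma:main} factors the $m$-linear inequality into a genuine one-variable off-diagonal inequality with respect to the auxiliary measure $\widehat w$, Theorem~\ref{thm:offdiagonal} is applied to change that single exponent, and one then iterates through the coordinates, tracking at each step that the intermediate vector $\vec t$ still satisfies $\vec r\preceq\vec t$ (this tracking is precisely what needs extra care when $p=r_{m+1}'$, and is the subtle point addressed in Section~\ref{section:proof-main}). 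Also, when $p_i=\infty$ the weight condition is $\big(\dashint_Q w_i^{-1}\,dx\big)<\infty$-type (an $A_1$ condition for $w_i^{-1}$), not an $A_\infty$-type condition; the end-point theorem handles this directly via the classes $A_{p,r}$ rather than by ``keeping $W_i=v_i$''. Your third paragraph (re-running extrapolation to upgrade the scalar conclusion to the $\ell^s$-valued one) is the approach the paper takes.
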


As mentioned, this result is a particular case of Theorem \ref{theor:extrapol-general} and extends \cite[Corollary 1.5]{LMO} where all the $p_i$'s, $q_i$'s, $s_i$'s and $p$ are assumed to be finite. Note that Theorem \ref{theor:extrapol-general:CZO} allows us to extrapolate starting with some (or even all) $p_i$'s being infinity. Not only that, even if we start with all the $p_i$'s being finite, we can derive estimates with some (but not all) of the $q_i$'s being infinity.

The statement of Theorem \ref{theor:extrapol-general} and its proof are given in Section \ref{sec:multi}. For the latter we follow the blueprint established in \cite{LMO} with very minor changes. However, a key ingredient in the proof is Theorem \ref{thm:offdiagonal}, which is a one-variable off-diagonal result of independent interest. This is indeed an extension of \cite[Theorem 5.1]{Duo} (and also of \cite{HMS}) allowing us to start with or to obtain end-point estimates. Our proof is, however, slightly different and follows the scheme used thoroughly in \cite{CMP}.

Our second goal of this paper is to present some applications of our extrapolation result. In Section \ref{sec:apps} we briefly consider some of the examples already treated in \cite{LMO} and these include multilinear Calder\'on-Zygmund operators, multilinear sparse forms, bilinear rough singular integral operators, the bilinear Hilbert transform and their commutators with BMO functions. In Section \ref{section:tensor}, we present a new application which  concerns mixed-norm estimates  (a topical subject, see e.g. \cite{BM1, BM2} and \cite{DO}). Here we work with tensor products $T_n \otimes T_m$, where $T_n$ and $T_m$ are \emph{bilinear} Calder\'on-Zygmund operators in $\R^n$ and $\R^m$. The tensor product is initially defined via
$$
(T_n \otimes T_m)(f_1 \otimes f_2, g_1 \otimes g_2)(x) := T_n(f_1, g_1)(x_1)T_m(f_2, g_2)(x_2),
$$
where $f_1, g_1 \colon \R^n \to \C$, $f_2, g_2 \colon \R^m \to \C$ and $x = (x_1, x_2) \in \R^{n+m}$.
Notice that if $T_n$ and $T_m$ are linear, then
$T_n \otimes T_m = T_n^1T_m^2$, where $T_n^1 f(x) = T_n(f(\cdot, x_2))(x_1)$ and $T_m^2 f(x) = T_m(f(x_1,\cdot))(x_2)$. One can then easily obtain the desired estimates by using iterative arguments and Fubini's theorem, and therefore, in the linear case, bi-parameter singular integrals are only interesting if they are not of tensor product type. Unlike the linear case, the theory of tensor products of bilinear operators is already non-trivial. Indeed, Journ\'e in \cite{Jo3} showed that a tensor product of general bilinear operators, both bounded from $L^{\infty} \times L^2$ to $L^2$, needs not be bounded from $L^{\infty} \times L^2$ to $L^2$. On the other hand, he obtained positive results for tensor products of  some multilinear singular integral forms from Christ-Journ\'e \cite{CJ}.

As an application of our extrapolation results we are able to obtain (see Corollary \ref{cor:TensorEndpoint}) that
$$
\| T_n \otimes T_m (f_1,f_2) \|_{L^{p}(\R^n; L^{q}(\R^m))}
\lesssim \|f_1 \|_{L^{p_1}(\R^n; L^{q_1}(\R^m))}
\| f_2 \|_{L^{p_2}(\R^n; L^{q_2}(\R^m))}
$$
whenever $p_1,p_2, q_1,q_2 \in (1, \infty]$ are such that
$\frac1p=\frac1{p_1}+\frac1{p_2}>0$ and $\frac1{q}=\frac1{q_1}+\frac1{q_2}>0$. This should be compared with \cite{LMV1} where the first, third, and last authors of the present paper considered general bilinear bi-parameter Calder\'on-Zygmund operators (with no tensor form). Some mixed-norm estimates were proved as quick corollaries from  the obtained weighted estimates (and operator-valued analysis). However, since the classes of weights considered were of product type $A_{p_1} \times A_{p_2}$ (in place of the corresponding bilinear class $A_{(p_1,p_2)})$, in the mixed-norm estimates one needed to assume that for the case $q < 1$ one has $p_1$, $p_2<\infty$. Here we are able to obtain more sophisticated weighted estimates and this restriction can be removed. It should be noticed that here we only work with tensor products of bilinear singular integrals, but we can show somewhat stronger weighted estimates compared to those  in \cite{LMV1}: we essentially use weights which are in $A_{(p_1,p_2)}(\R^n)$ and in $A_{p_1}(\R^m) \times A_{p_2}(\R^m)$ and, by extrapolation, this is enough for the mixed-norm inequalities in question.

To conclude with this introduction we would like to mention that some interesting related work has recently appeared while this manuscript was in preparation. Nieraeth in \cite{Nier} has obtained a result similar to ours with a proof which uses an independent different method. In a nutshell, Theorem \ref{theor:extrapol-general} ---which was announced in \cite{LMO} before \cite{Nier} was posted--- is proved by following \cite[Proof of Theorem 1.1]{LMO} with some appropriate changes both in the argument and in the notation. Having said that, the main tool that we need here is Theorem \ref{thm:offdiagonal}, which extends \cite[Theorem 5.1]{Duo} (and also \cite{HMS}) to allow for end-point estimates. We also note that our proof of Theorem \ref{thm:offdiagonal} is somewhat different, and of independent interest, than that in \cite{Duo} and follows the scheme thoroughly employed in \cite[Chapter 3]{CMP}. On the other hand Duoandikoetxea and Oruetxebarria in \cite{duo-orue} have obtained mixed-norm estimates of radial-angular type by developing an extrapolation theory for radial weights.

\subsection*{Acknowledgments}
K. Li was supported by Juan de la Cierva - Formaci\'on 2015 FJCI-2015-24547, by the Basque Government through the BERC
2018-2021 program and by Spanish Ministry of Economy and Competitiveness
MINECO through BCAM Severo Ochoa excellence accreditation SEV-2017-0718
and through project MTM2017-82160-C2-1-P funded by (AEI/FEDER, UE) and
acronym ``HAQMEC''.

J.M. Martell was supported by the Spanish Ministry of Economy and Competitiveness,
through the ``Severo Ochoa Programme for Centres of Excellence in
R\&D'' (SEV-2015-0554) and the European
Research Council through the European Union's Seventh Framework
Programme (FP7/2007-2013)/ ERC agreement no. 615112 HAPDEGMT.

H. Martikainen was supported by the Academy of Finland through the grants 294840 and 306901, and by the three-year research grant 75160010 of the University of Helsinki.
He is a member of the Finnish Centre of Excellence in Analysis and Dynamics Research.

S. Ombrosi was supported by CONICET PIP 11220130100329CO, Argentina.

E. Vuorinen was supported by the Jenny and Antti Wihuri Foundation.

The authors would like to thank the anonymous referee for the useful suggestions that  helped us to improve the presentation of the paper.

\section{End-point extrapolation for multilinear Muckenhoupt classes}\label{sec:multi}

Our main result in this section contains the extension to the end-point cases announced in our previous paper  \cite{LMO}.  Before we state that result, let us recall some notations and make some conventions. Given a  cube $Q$, its side-length will be denoted by $\ell(Q)$ and for  any $\lambda>0$ we let  $\lambda Q$ be
the cube concentric with $Q$ whose side-length is $\lambda \ell
(Q)$. Let $\mu$ be a doubling measure on $\re^n$, that is, $\mu$ is a non-negative Borel regular measure such that $\mu(2Q)\le C_\mu \mu(Q)<\infty$ for every cube $Q\subset \re^n$. Given a Borel set $E\subset\re^n$ with $0<\mu(E)<\infty$ we use the notation
\[
\dashint_E f d\mu=\frac1{\mu(E)}\int_E fd\mu.
\]

This section is devoted to applying off-diagonal extrapolation to get the multivariable extrapolation in the end-point cases.

Hereafter, $m\ge 2$.  Given  $\vec p=(p_1,\dots, p_m)$ with $1\le
p_1,\dots, p_m\le \infty$ and $\vec{r}=(r_1,\dots,r_{m+1})$ with $1\le r_1,\dots,r_{m+1}<\infty$,
we say that $\vec{r}\preceq_\star \vec{p}$ whenever
\[
r_i\le p_i,
\quad
i=1,\dots, m;
\quad\mbox{and}
\quad r_{m+1}'\ge p,
\quad
\mbox{where}
\quad
\frac1p:=\frac1{p_1}+\dots+\frac1{p_{m}}
.
\]
This should be compared with the notation $\vec{r}\preceq\vec{p}$ from \cite{LMO} where one adds the restriction $r_{m+1}'> p$. Analogously, we say that $\vec{r}\prec \vec{p}$ if $\vec{r}\preceq_\star \vec{p}$ and moreover $r_i<p_i$ for every $i=1,\dots, m$ and $r_{m+1}'> p$.
Notice that the fact that $\vec{r}\preceq_\star\vec{p}$ forces that $\sum_{i=1}^{m+1}\frac1{r_i}\ge 1$ and also $\frac1 p\le \sum_{i=1}^m \frac1{r_i}$. Hence, if $\sum_{i=1}^m \frac1{r_i}>1$ then we allow $p$ to be smaller than one.

We can now introduce the classes of multilinear Muckenhoupt weights that we consider in the present paper. Given  $\vec p=(p_1,\dots, p_m)$ with $1\le p_1,\dots,p_m\le \infty$ and $\vec{r}=(r_1,\dots,r_{m+1})$ with $1\le r_1,\dots,r_{m+1}<\infty$ so that $\vec{r}\preceq_\star\vec{p}$ we say that   $\vec{w}=(w_1,\dots, w_m)\in A_{\vec p, \vec r}$, provided $0<w_i<\infty$ a.e. for every $i=1,\dots,m$ and
\[
[\vec{w}]_{A_{\vec p, \vec r}}
=\sup_Q\Big(\dashint_Q w^{\frac {r_{m+1}'p}{r_{m+1}'- p}} dx\Big)^{\frac 1{p}-\frac
	1{r_{m+1}'}}
\prod_{i=1}^m \Big(\dashint_Q w_i^{\frac{r_i p_i}{r_i-p_i}} d x\Big)^{\frac 1{r_i}-\frac
	1{p_i}}<\infty,
\]
where $w=\prod_{i=1}^m w_i$. When $p=r_{m+1}'$ the term corresponding to $w$ needs to be replaced by $\esssup_Q w$ and, analogously, when $p_i=r_i$, the term
corresponding to $w_i$ should be $\esssup_Q w_i^{-1}$. Also, if $p_i=\infty$ the term corresponding to $w_i$ becomes $\Big(\dashint_Q w_i^{-r_i} d x\Big)^{\frac
	1{r_i}}$. If $p=\infty$, one necessarily have $r_{m+1}=1$ and $p_1=\dots=p_m=\infty$, hence the term corresponding to $w$ must be $\esssup_Q w$ while the terms corresponding to $w_i$ become $\Big(\dashint_Q w_i^{-r_i} d x\Big)^{\frac
	1{r_i}}$. When $r_{m+1}=1$ and $p<\infty$ the term corresponding to $w$ needs to be replaced by $\Big(\dashint_Q w^p d x\Big)^{\frac
	1{p}}$. We remark that similar classes of weights, to be precise when $r_1=\cdots=r_m=r\ge 1$, were introduced earlier in \cite{CTW}.

Note that with the previous definition $\vec{w}\in A_{\vec p, (1,\dots,1)}$  means that
\[
[\vec{w}]_{A_{\vec p, (1,\dots,1)}}
=\sup_Q\Big(\dashint_Q w^{p} dx\Big)^{\frac 1{p}}
\prod_{i=1}^m \Big(\dashint_Q w_i^{-p_i'}d x\Big)^{\frac 1{p_i'}}<\infty,
\]
where $w=\prod_{i=1}^m w_i$ and with the appropriate changes when some $p_i=1$ or $p=\infty$. In the sequel we will just simply denote $A_{\vec p, (1,\dots,1)}$ by $A_{\vec p}$. We would like to observe our definition  of the classes ${A_{\vec p, \vec r}}$ is slightly different to that in \cite{LOPTT} or \cite{LMO} (notice that e.g. $[w_1^{p_1}, \cdots, w_m^{p_m}]_{A_{\vec p}}^{\frac 1p}$ with $A_{\vec p}$ defined in \cite{LOPTT} agrees with our $[\vec w]_{A_{\vec p}}$ when $p_i$'s are finite). This change is just cosmetic but it turns out to be useful for understanding the end-point estimates.

It is convenient to write the condition $A_{\vec p, \vec r}$ in a different form. With that goal in mind, given  $\vec p=(p_1,\dots, p_m)$ with $1\le p_1,\dots,p_m\le \infty$ and $\vec{r}=(r_1,\dots,r_{m+1})$
with $1\le r_1,\dots,r_{m+1}<\infty$ so that $\vec{r}\preceq_\star\vec{p}$ we set
\[
\frac1r:=\sum_{i=1}^{m+1} \frac1{r_i},
\qquad
\frac1{p_{m+1}}:=1-\frac1p,
\qquad\mbox{and}\qquad
\frac1{\delta_i}:=\frac1{r_i}-\frac1{p_i},
\quad i=1,\dots, m+1.
\]
Notice that as observed above we have that $0<r\le 1$ and formally $\frac1{p_{m+1}}=\frac1{p'}$ which  could be negative or zero if $p\le 1$. Note that in this way
\[
\sum_{i=1}^{m+1}\frac1{p_i}=1
\qquad\mbox{and}\qquad
\sum_{i=1}^{m+1}\frac1{\delta_i}=\frac1{r}-1=\frac{1-r}{r}.
\]
Also, $\vec{r}\preceq_\star\vec{p}$ means that $\delta_i^{-1}\ge 0$ for every $1\le i\le m+1$. On the other hand, $\vec{r}\prec\vec{p}$ means that $\delta_i^{-1}>0$ for every $1\le i\le m+1$.
Notice that with this notation $\vec w=(w_1, .., w_m)\in A_{\vec{p},\vec{r}}$  can be written as
\[
[\vec{w}]_{A_{\vec p, \vec r}}
=
\sup_Q\Big(\dashint_Q w^{  {\delta_{m+1}} }d x\Big)^{\frac 1{\delta_{m+1}}}
\prod_{i=1}^m \Big(\dashint_Q w_i^{- {\delta_i} }d x\Big)^{\frac1{\delta_i}}
<\infty,
\]
where $w=\prod_{i=1}^m w_i$.

Again we shall use the abstract formalism of extrapolation families. Hereafter $\mathcal{F}$ will denote a family of $(m+1)$-tuples $(f,f_1,\ldots,f_m)$ of non-negative
measurable functions.

Our main result is the following:

\begin{Theorem}\label{theor:extrapol-general}
	Let $ \mathcal F$ be a collection of $(m+1)$-tuples of non-negative functions. 	Consider a vector $\vec{r}=(r_1,\dots,r_{m+1})$, with $1\le r_1,\dots,r_{m+1}<\infty$, and
	exponents $\vec p=(p_1,\dots, p_m)$ with $1\le p_1,\dots,p_{m}\le\infty$, so that $\vec{r}\preceq_\star \vec{p}$. Assume that given any $\vec w=(w_1,\dots, w_m) \in A_{\vec p,\vec{r}}$ the inequality
	\begin{equation}\label{extrapol:H}
	\|fw\|_{L^{p}} \le \varphi([\vec w]_{A_{\vec p, \vec r}}) \prod_{i=1}^m\|f_iw_i\|_{L^{p_i} }
	\end{equation}
	holds for every $(f,f_1,\dots, f_m)\in \mathcal F$, where $\frac1p:=\frac1{p_1}+\dots+\frac1{p_m}$, $w:=\prod_{i=1}^m w_i $, and $\varphi\ge 0$ is a non-decreasing function.  	Then for all exponents $\vec q=(q_1,\dots,q_m)$, with $1<q_1,\dots, q_m\le \infty$, so that $\vec{r}\prec\vec{q}$, and for all weights $\vec v=(v_1,\dots, v_m) \in A_{\vec q,\vec{r}}$ the inequality
	\begin{equation}\label{extrapol:C}
	\|fv\|_{L^{q} } \le \widetilde{\varphi}([\vec v]_{A_{\vec q, \vec r}})\prod_{i=1}^m \|f_iv_i\|_{L^{q_i} }
	\end{equation}
	holds for every $(f,f_1,\dots,f_m)\in \mathcal F$, where $\frac1q:=\frac1{q_1}+\dots+\frac1{q_m}>0$, $v:=\prod_{i=1}^m v_i $, and $\widetilde{\varphi}\ge 0$ is a non-decreasing function.
	
	Moreover, for the same family of exponents and
	weights, and for all exponents $\vec{s}=(s_1,\dots, s_m)$ with  $1<
	s_1,\dots, s_m\le \infty$, so that $\vec{r}\prec\vec{s}$
	\begin{equation}\label{extrapol:vv}
	\Big\|\big\{f^jv\}_j\Big\|_{L^{q}_{\ell^s}}
	\le
	\widetilde{\varphi}([\vec v]_{A_{\vec q, \vec r}})
	\prod_{i=1}^m \Big\|\big\{f_i^j v_i\}_j\Big\|_{L^{q_i}_{\ell^{s_i}} }
	\end{equation}
	for all $\{(f^j, f_1^j, \dots, f_m^j)\}_j\subset\mathcal{F}$ and where $\frac1s:=\frac1{s_1}+\dots+\frac1{s_m}>0$.
\end{Theorem}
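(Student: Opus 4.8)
The plan is to follow \cite[Proof of Theorem~1.1]{LMO} essentially line by line, the only substantial change being that every appeal to the finite one-variable off-diagonal extrapolation theorem \cite[Theorem~5.1]{Duo} is replaced by its end-point extension, Theorem~\ref{thm:offdiagonal}; the vector-valued conclusion \eqref{extrapol:vv} is then a formal consequence of the scalar one \eqref{extrapol:C}. As in every abstract extrapolation argument, one first performs the standard reduction: it suffices to prove \eqref{extrapol:C} and \eqref{extrapol:vv} for those tuples, respectively sequences of tuples, in $\mathcal F$ for which all the norms occurring on both sides are finite and positive, the remaining cases being trivial. This step is routine and identical to the one in \cite{LMO}.

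For the scalar conclusion \eqref{extrapol:C} I would argue exactly as in \cite{LMO}, passing from $\vec p$ to $\vec q$ by changing one exponent at a time. It is convenient to use the $\delta_i$-form of the conditions $A_{\vec p,\vec r}$ and $A_{\vec q,\vec r}$ recalled before the statement, so that the budget identity $\sum_{i=1}^{m+1}\delta_i^{-1}=r^{-1}-1$ and the sign conditions $\delta_i^{-1}\ge 0$ (and $\delta_i^{-1}>0$ in the target) are visible; since changing a single exponent $p_k\rightsquigarrow q_k$, with the remaining exponents kept fixed, does not alter $\sum_i\delta_i^{-1}$ (the budget is merely shifted between $\delta_k$ and $\delta_{m+1}$), one can arrange the $m$ changes in an order for which every intermediate exponent vector is admissible (perform first the coordinates with $q_k\le p_k$, then those with $q_k\ge p_k$). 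To carry out one change, one isolates the $k$-th coordinate: freezing the functions $f_i$ and weights $w_i$ with $i\neq k$ and setting $F:=f\prod_{i\neq k}w_i$, the hypothesis \eqref{extrapol:H} becomes a one-variable off-diagonal weighted inequality
\[
\|F\,w_k\|_{L^{p}}\ \le\ \varphi\big([\vec w]_{A_{\vec p,\vec r}}\big)\Big(\prod_{i\neq k}\|f_i w_i\|_{L^{p_i}}\Big)\,\|f_k\,w_k\|_{L^{p_k}},
\]
whose off-diagonal gap $p^{-1}-p_k^{-1}=\sum_{i\neq k}p_i^{-1}$ depends only on the frozen exponents and in which $w_k$ ranges over the one-variable off-diagonal Muckenhoupt class---relative to the fixed background weight $\big(\prod_{i\neq k}w_i\big)^{\delta_{m+1}}$---obtained by specializing the condition $A_{\vec p,\vec r}$. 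One then applies one-variable off-diagonal extrapolation to replace $p_k$ by $q_k$ (and $p$ by the corresponding output exponent). In \cite{LMO} all exponents were finite and \cite[Theorem~5.1]{Duo} sufficed; here the input sits at $L^{\infty}$ when $p_k=\infty$, and when $p=\infty$---which by the structure of $A_{\vec p,\vec r}$ forces $r_{m+1}=1$ and $p_1=\dots=p_m=\infty$---both the input and the output are at $L^{\infty}$ and the $w$-factor of the characteristic becomes $\esssup_Q w$, so Theorem~\ref{thm:offdiagonal} is invoked in these cases instead. Running the $m$ steps and collecting the characteristics produces the non-decreasing $\widetilde\varphi$ in \eqref{extrapol:C}.

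The vector-valued conclusion \eqref{extrapol:vv} then follows from \eqref{extrapol:C} by the usual $\ell^s$-ization. Fix $\vec q$ and $\vec s$ as in the statement and $\vec v\in A_{\vec q,\vec r}$. Given $\{(f^j,f_1^j,\dots,f_m^j)\}_j\subset\mathcal F$, set $g:=\big(\sum_j|f^j|^{s}\big)^{1/s}$ and $g_i:=\big(\sum_j|f_i^j|^{s_i}\big)^{1/s_i}$, and let $\widetilde{\mathcal F}$ be the family of all $(m+1)$-tuples $(g,g_1,\dots,g_m)$ so obtained. For any $\vec w\in A_{\vec s,\vec r}$, applying \eqref{extrapol:C} with exponent vector $\vec s$ (legitimate since $\vec r\prec\vec s$ and $1<s_i\le\infty$) to each index $j$, and then the generalized Hölder inequality for sequences with exponents $\{s_i/s\}_{i=1}^m$ (valid because $\sum_{i=1}^m s/s_i=1$), gives
\[
\|g\,w\|_{L^{s}}=\Big(\sum_j\|f^j w\|_{L^{s}}^{s}\Big)^{1/s}
\le\widetilde\varphi\big([\vec w]_{A_{\vec s,\vec r}}\big)\prod_{i=1}^m\Big(\sum_j\|f_i^j w_i\|_{L^{s_i}}^{s_i}\Big)^{1/s_i}
=\widetilde\varphi\big([\vec w]_{A_{\vec s,\vec r}}\big)\prod_{i=1}^m\|g_i\,w_i\|_{L^{s_i}}.
\]
Thus $\widetilde{\mathcal F}$ satisfies the hypothesis \eqref{extrapol:H} with exponent vector $\vec s$ (and the same $\vec r$; note $\vec r\prec\vec s$ implies $\vec r\preceq_\star\vec s$), so the scalar conclusion already proven, applied to $\widetilde{\mathcal F}$ with target exponent vector $\vec q$ and weight $\vec v$, yields $\|g\,v\|_{L^{q}}\le\widetilde\varphi([\vec v]_{A_{\vec q,\vec r}})\prod_{i=1}^m\|g_i\,v_i\|_{L^{q_i}}$ (after enlarging $\widetilde\varphi$ if necessary), which is precisely \eqref{extrapol:vv} once $g$ and $g_i$ are written out.

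The genuinely new step, and the one I expect to be the main obstacle, is the single-coordinate move at an end-point: one must check that each end-point instance ($p_k=\infty$, or $p=\infty$) falls within the hypotheses of Theorem~\ref{thm:offdiagonal}, and that the one-variable off-diagonal class inherited by $w_k$ from $A_{\vec p,\vec r}$---together with the correct $\esssup$ replacements in its characteristic---coincides with the class that Theorem~\ref{thm:offdiagonal} produces after the move. Once this dictionary between the end-point terms of $[\vec w]_{A_{\vec p,\vec r}}$ and the one-variable off-diagonal Muckenhoupt classes is in place, the remainder is exactly the bookkeeping of \cite{LMO}.
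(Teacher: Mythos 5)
Your overall strategy is the same as the paper's: redo the LMO coordinate-by-coordinate argument, replace Duoandikoetxea's one-variable off-diagonal extrapolation by Theorem~\ref{thm:offdiagonal}, and deduce the vector-valued conclusion by the standard $\ell^s$-ization (including the $\ell^\infty$ factor-out when some $s_i=\infty$). However, two of your intermediate claims are mis-stated compared to what the paper (and LMO) actually does, and a careful writer would need to fix them.

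First, the one-variable reduction is not the naive freezing you describe. Fixing $w_i$ for $i\neq k$ and letting $w_k$ ``range over'' a one-variable class does not reproduce $A_{\vec p,\vec r}$ as $w_k$ varies; $A_{\vec p,\vec r}$ is a genuinely joint condition. The paper's mechanism is Lemma~\ref{lemma:main} (the end-point extension of \cite[Lemma~3.2]{LMO}): after reordering so that the $m$-th coordinate is the one being moved, one reparametrizes $\vec w\in A_{\vec p,\vec r}$ by $(w_1,\dots,w_{m-1},W)$ with $\widehat{w}=\big(\prod_{i=1}^{m-1}w_i\big)^{\varrho}$, $\varrho^{-1}=\delta_m^{-1}+\delta_{m+1}^{-1}$, and $W=w_m^{r_m}\widehat{w}^{r_m/\delta_m}\in A_{p_m/r_m,\,\delta_{m+1}/r_m}(\widehat{w})$, and then applies Theorem~\ref{thm:offdiagonal} to $W$ with respect to the measure $d\widehat{w}$ and with the data rewritten as in \eqref{LHS-rew}--\eqref{RHS-rew}. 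In particular your ``background weight'' exponent $\delta_{m+1}$ is wrong: it should be $\varrho$, and the extrapolated variable is $W$, not $w_k$.

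Second, the paper explicitly singles out a genuine new subtlety relative to LMO that your sketch leaves implicit: Lemma~\ref{lemma:main} requires $\varrho^{-1}>0$, which in LMO was automatic from $\delta_{m+1}^{-1}>0$ (i.e.\ $p<r_{m+1}'$), but here can fail at the start because $\vec{r}\preceq_\star\vec p$ allows $p=r_{m+1}'$. The fix is exactly in the spirit of your ordering heuristic, but must be made precise: when $p=r_{m+1}'$ one shows $\mathcal I=\{i:p_i>q_i\}$ is nonempty (else $q\le p=r_{m+1}'$ contradicts $\vec r\prec\vec q$), reorders so that $p_m>q_m>r_m$ (hence $\delta_m^{-1}>0$ and thus $\varrho^{-1}>0$), performs that coordinate change first, and checks that the intermediate vector $\vec t$ satisfies $\vec r\preceq\vec t$, i.e.\ $t<r_{m+1}'$, so all subsequent steps fall into the LMO regime. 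With these two corrections your proposal becomes the paper's proof.
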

We would like to remark that this result was announced in \cite{LMO} and the main difference with \cite[Theorem 1.1]{LMO} is that here we allow the $p_i$'s and/or the $q_i$'s to take the value  infinity. Also, in the current result we can start with $p=r_{m+1}'$ (including the case $p=\infty$ if $r_{m+1}=1$) while in the conclusion we obtain $q<\infty$, since $\vec{r}\prec\vec{q}$. We also remark that if we start with $p_{i_0}=r_{i_0}$ for some given $i_0$ then in the conclusion we can relax $q_{i_0}>r_{i_0}$ to $q_{i_0}\ge r_{i_0}$, see \cite[Remark 1.8]{LMO}.

\subsection{End-point off-diagonal extrapolation theorem}
In this section we pre\-sent an end-point off-diagonal extrapolation result which is going to play an crucial role in the proof of  Theorem \ref{theor:extrapol-general}.
Next we give the basic properties of weights that we will
need below.  For proofs and further information,
see~\cite{duoandikoetxea01, grafakos08b}.   By a weight we mean a
measurable  function $v$ such that
$0<v<\infty$ $\mu$-a.e.   For $1<p<\infty$, we say that $v\in
A_p(\mu)$ if
\[
[v]_{A_p(\mu)} = \sup_Q \dashint_Q v\,d\mu
\left(\dashint_Q v^{1-p'}\,d\mu\right)^{p-1} < \infty, \]
where the supremum is taken over all cubes $Q\subset \re^n$. The quantity $[v]_{A_p(\mu)}$ is called
the $A_p(\mu)$ constant of $v$.  Notice that it follows at once from this
definition that if $v\in A_p(\mu)$, then $v^{1-p'}\in A_{p'}(\mu)$.  When $p=1$
we say that $v\in A_1(\mu)$ if
\[  [v]_{A_1(\mu)} = \sup_Q\left( \dashint_Q v\,d\mu\right) \esssup_{Q} v^{-1}<
\infty, \]
where the essential supremum is taken with respect to the underlying doubling measure $\mu$.
The $A_p(\mu)$ classes are properly nested:  for $1<p<q$, $A_1(\mu)\subsetneq
A_p(\mu) \subsetneq A_q(\mu)$.
We denote the union of all the $A_p(\mu)$ classes, $1\leq p<\infty$, by
$A_\infty(\mu)$.

Given $1\le p\le \infty$ and $0<r\le \infty$  we say that $v\in A_{p,r}(\mu)$ if
\[
[v]_{A_{p,r}(\mu)}
=
\sup_Q  \left(\dashint_Q v^{r}\,d\mu\right)^\frac1r
\left( \dashint_Q v^{-p'}\,d\mu\right)^{\frac{1}{p'}}<\infty, \]
where one has to replace the first term by $\esssup_Q v$ when $r=\infty$ and the second term by $\esssup_{Q} v^{-1}$ when $p=1$, and these essential suprema are taking with respect to $\mu$. The case $p=1$ and $r=\infty$ gives a trivial class of weights since $v\in A_{1,\infty}$ amounts to say that $v\approx 1$ $\mu$-a.e.. Assuming that we are not in that case we always have
\begin{equation}\label{gamma:rp}
\gamma_{p,r}:= \frac 1{r}+ \frac 1{p'}>0
\end{equation}
and one can easily see that $v\in A_{p,r}(\mu)$ if and only if $v^r\in A_{r\gamma_{p,r}}(\mu)$ if and only if $v^{-p'}\in A_{p'\gamma_{p,r}}$ with
\[
[v]_{A_{p,r}(\mu)}
=
[v^r]_{A_{r\gamma_{p,r}}(\mu)}^{\frac1r}
=
[v^{-p'}]_{A_{p'\gamma_{p,r}}(\mu)}^{\frac1{p'}}
\]
when $1<p\le\infty$ and $0<r<\infty$. If $p=1$ and $0<r<\infty$ then $\gamma_{p,r}:=\frac1r$ and $v\in A_{p,r}(\mu)$ if and only if $v^r\in A_{1}(\mu)$ with
$[v]_{A_{p,r}(\mu)}
=
[v^r]_{A_{1}(\mu)}^{\frac1r}$. Also, if $1<p\le\infty$  and $r=\infty$ then $\gamma_{p,r}:=\frac1{p'}$ and $v\in A_{p,r}(\mu)$ if and only if $v^{-p'}\in A_{1}(\mu)$ with
$[v]_{A_{p,r}(\mu)}
=
[v^{-p'}]_{A_{1}(\mu)}^{\frac1{p'}}$. We remark that the current definition of $A_{p,r}$ constant is slightly different with the one that was used in \cite{Duo} and \cite{LMO},  because we have to take care of the case $r=\infty$.

When $\mu$ is the Lebesgue measure we will simply write $A_p$, $A_{p,r}$, \dots. It is well-known that if $w\in A_\infty$ then $dw=w(x)dx$ is a doubling measure. Besides, since $0<w<\infty$ a.e., then the Lebesgue measure and  $w$ have the same null measure sets hence the essential suprema and infima with respect to the Lebesgue measure and $w$ agree.

We shall use the abstract formalism of extrapolation families.    Hereafter $\mathscr{F}$ will
denote a family of  pairs $(f,g)$ of non-negative
measurable functions.   This approach to extrapolation has the
advantage that, for instance, vector-valued inequalities are an
immediate consequence of our extrapolation results.  We will discuss
applying this formalism to prove norm inequalities for specific operators
below.  For complete discussion of this
approach to extrapolation in the linear setting, see~\cite{CMP}.

The main result of this section is the following off-diagonal Rubio de Francia extrapolation result which extends \cite[Theorem 5.1]{Duo} (and also \cite{HMS}) allowing us to start with or to obtain end-point results:

\begin{Theorem}\label{thm:offdiagonal}
Let $ \mathscr F$ be a collection of pairs of non-negative functions. Let $1\le p_0\le \infty$, $0<r_0\le\infty$, and $0<q_0\le \infty$.
Assume that for all $w\in A_{p_0, r_0}$ we have
\begin{equation}\label{off-extrapol:hyp}
\| f w\|_{L^{q_0}}\le \varphi([w]_{A_{p_0, r_0}})\|gw\|_{L^{p_0}},
\end{equation}
for all $(f,g)\in\mathscr{F}$ and where $\varphi\ge 0$ is a non-decreasing function.
Then for all $1<p\le\infty$, $0<r<\infty$, and $0<q\le \infty$ such that
\begin{equation}\label{off-extrapol:compat}
\frac 1q-\frac 1{q_0}= \frac 1r- \frac 1{r_0}= \frac 1p-\frac 1{p_0},
\end{equation}
and all $w\in A_{p, r}$ we have
\begin{equation}\label{off-extrapol:conc}
\| f w\|_{L^{q}}\le \widetilde{\varphi}([w]_{A_{p, r}})\|gw\|_{L^{p}},
\end{equation}
for all $(f,g)\in\mathscr{F}$ and where $\widetilde{\varphi}\ge 0$ is a non-decreasing function.
\end{Theorem}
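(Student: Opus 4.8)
The plan is to run the Rubio de Francia algorithm in off-diagonal form, following the scheme of \cite[Chapter 3]{CMP} and extending \cite[Theorem 5.1]{Duo}; the genuinely new point is keeping the end-point exponents under control. First, the reductions. Fix $p,r,q$ satisfying \eqref{off-extrapol:compat} and a weight $w\in A_{p,r}(\mu)$, and set $\gamma:=\tfrac1r+\tfrac1{p'}$ (with $\tfrac1{p'}:=0$ if $p=\infty$). By \eqref{off-extrapol:compat} one has $\gamma=\tfrac1{r_0}+\tfrac1{p_0'}=\gamma_{p_0,r_0}$, and we may assume $\gamma>0$, otherwise the classes are trivial (cf.\ \eqref{gamma:rp}). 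I will constantly use the equivalences recalled after \eqref{gamma:rp}: $w\in A_{p,r}(\mu)\iff w^{r}\in A_{r\gamma}(\mu)\iff w^{-p'}\in A_{p'\gamma}(\mu)$, and likewise for $(p_0,r_0)$; the structural point is that the $A_\infty$-indices entering both the hypothesis and the conclusion are governed by the \emph{same} $\gamma$. It suffices to prove \eqref{off-extrapol:conc} for those $(f,g)\in\mathscr F$ with $0<\|gw\|_{L^p(\mu)}<\infty$, and after scaling we may take $\|gw\|_{L^p(\mu)}=1$ (finiteness of $\|fw\|_{L^q(\mu)}$ is then obtained along the way). Put $\kappa:=\tfrac1q-\tfrac1{q_0}=\tfrac1r-\tfrac1{r_0}=\tfrac1p-\tfrac1{p_0}$; if $\kappa=0$ there is nothing to do, so the argument splits into $\kappa<0$ (equivalently $p_0<p$, $q_0<q$, $r_0<r$) and $\kappa>0$. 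I will also use the Rubio de Francia iteration operators: for $a\in(1,\infty)$ and $u\in A_\infty(\mu)$, with $M$ the $\mu$-Hardy--Littlewood maximal operator (bounded on $L^a(u\,d\mu)$ since $\mu$ is doubling and $u\in A_\infty$),
\[
\calR_{a}^{u}\phi:=\sum_{k=0}^{\infty}\frac{M^{k}\phi}{2^{k}\,\|M\|_{L^{a}(u\,d\mu)\to L^{a}(u\,d\mu)}^{k}},
\]
so that $\phi\le\calR_{a}^{u}\phi$, $\|\calR_{a}^{u}\phi\|_{L^{a}(u\,d\mu)}\le2\|\phi\|_{L^{a}(u\,d\mu)}$ and $\calR_{a}^{u}\phi\in A_1(u\,d\mu)$ with constant at most $2\|M\|_{L^{a}(u\,d\mu)\to L^{a}(u\,d\mu)}$; write $\calR_a=\calR_a^{1}$.

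The core argument has the same shape in both cases. One dualises $\|fw\|_{L^q(\mu)}$ — after raising it to a suitable power and/or passing to a weighted Lebesgue space, the precise manoeuvre being dictated by the sign of $\kappa$ — thereby reducing to estimating $\int f^{t}\omega\,d\mu$ for an appropriate exponent $t$ and density $\omega$. Replacing the dual function $H$ by $\calR H\ge H$ dominates this by $\|fW\|_{L^{q_0}(\mu)}^{t}$ for a weight $W$ obtained from $w$ and one (in the case $\kappa<0$) or two (in the case $\kappa>0$) of the iterates above; the hypothesis \eqref{off-extrapol:hyp} applied to $W$ turns this into $\|gW\|_{L^{p_0}(\mu)}^{t}$, and Hölder's inequality together with $\|gw\|_{L^p(\mu)}=1$ and $\|\calR H\|\lesssim\|H\|$ closes the estimate, yielding \eqref{off-extrapol:conc} with $\widetilde\varphi$ an explicit non-decreasing function of $[w]_{A_{p,r}(\mu)}$. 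The role of \eqref{off-extrapol:compat} is precisely to make all the exponents match: for instance, in the case $\kappa<0$ one has $q/q_0>1$, so $\|fw\|_{L^q(\mu)}^{q_0}=\sup\{\int(fw)^{q_0}H\,d\mu:0\le H,\ \|H\|_{L^{(q/q_0)'}(\mu)}\le1\}$, and \eqref{off-extrapol:compat} is exactly the identity $(p_0/q_0)(p/p_0)'=(q/q_0)'$ that makes the $g$-side Hölder step produce the desired right-hand side $\|gw\|_{L^p(\mu)}$. In the case $\kappa>0$ one dualises $\|fw\|_{L^q(\mu)}$ directly (in $L^q(\mu)$, or, when $q\le1$, after first raising to a power to reach an exponent exceeding $1$); now the Hölder inequality on the $g$-side runs the ``wrong'' way, which forces $W$ to encode $gw$ as well — through a second Rubio de Francia iterate applied to a power of $gw$ with respect to a suitably weighted measure — and this is why that case is heavier.

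The decisive point — and the main obstacle — is the weight bookkeeping underlying the previous paragraph: the powers of $w$ in $W$, and the reference measures for the iteration(s), must be chosen so that $W\in A_{p_0,r_0}(\mu)$ with $[W]_{A_{p_0,r_0}(\mu)}$ bounded by a fixed power of $[w]_{A_{p,r}(\mu)}$, \emph{and} so that both Hölder estimates (the ``$f$-side'' lower bound and the ``$g$-side'' upper bound) close under the normalisation $\|gw\|_{L^p(\mu)}=1$. The membership $W\in A_{p_0,r_0}(\mu)$ is verified through the equivalences of the first paragraph together with the standard weight lemmas (Jones factorisation, stability of the $A_s$ classes under multiplication by $A_1$ weights and under fractional powers, and the interaction of $A_1(u\,d\mu)$ with $u\in A_\infty(\mu)$), and it is exactly here that $\gamma_{p_0,r_0}=\gamma_{p,r}$ enters. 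Finally, each extreme value of the data — $p=\infty$, $p_0=1$, $q=\infty$, $q_0=\infty$, $q\le1$, or $r_0=\infty$ — collapses one of the dualities or Hölder steps above (for example, $q$ or $q_0$ equal to $\infty$ kills the relevant duality, while $p_0=1$ makes $p_0'=\infty$ and changes which of the two equivalences for $A_{p_0,r_0}$ must be used), so each such configuration has to be treated by a minor modification of, or a limiting argument from, the generic case; assembling these end-point cases is precisely the content that is new relative to \cite{Duo} and \cite{HMS}.
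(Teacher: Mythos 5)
Your sketch follows the same overall strategy as the paper: reduce to $\gamma_{p,r}=\gamma_{p_0,r_0}=\gamma>0$, split by the sign of $\kappa=\tfrac1p-\tfrac1{p_0}$, run Rubio de Francia iteration(s) to build an auxiliary weight $W$ from $w$ and (a normalized power of) $gw$, verify $W\in A_{p_0,r_0}$ with constant controlled by $[w]_{A_{p,r}}$, apply \eqref{off-extrapol:hyp} to $W$, and close with H\"older. You also correctly isolate the structural point that both hypothesis and conclusion live under the same $\gamma$, and that the $A_{p_0,r_0}$-membership of $W$ is where the real work is. That much matches.

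Two places where your outline diverges from, or under-describes, what the paper actually does. First, in the case $\kappa>0$ (so $p<p_0$, $q<q_0$, $r<r_0$) the paper does \emph{not} dualize $\|fw\|_{L^q}$ nor does it need two iterates. Because $\tfrac1q=\tfrac1{q_0}+\tfrac1s$ here, one writes $\|fw\|_{L^q}=\|(fW)(H^{p/s}w^{-p'/s})\|_{L^q}\le\|fW\|_{L^{q_0}}\|H^{p/s}w^{-p'/s}\|_{L^s}$ directly by H\"older, with a \emph{single} iteration $H=\mathcal R'(h^{p/(r\gamma)})^{r\gamma/p}$ applied to the normalized $h=gw^{p'}/\|gw\|_{L^p}$, and the $g$-side is handled by the pointwise bound $g\le Hw^{-p'}\|gw\|_{L^p}$. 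Your proposal of duality plus a second iterate for this direction is more than is needed and would complicate the endpoint bookkeeping (e.g.\ when $q\le1$) for no gain. Second, you fold the case $q=\infty$ into the phrase ``minor modification/limiting argument.'' That case (the paper's Case 3, which occurs only when $\kappa<0$ and $\tfrac1{q_0}=\tfrac1s$) is handled by a genuinely separate argument: with $h$ the normalized indicator of a ball $B(x_0,\tau_0)$ one proves the uniform estimate $\big(\dashint_{B(x_0,\tau_0)}(fw)^{q_0}\big)^{1/q_0}\lesssim\|gw\|_{L^p}$ and then lets $\tau_0\to0$ at Lebesgue points. It is in the same spirit as Case 2 but the role of $h$ and the passage to the $L^\infty$ conclusion are different enough that it is not a corollary of, nor a literal limit of, the finite-$q$ case; your summary would need to be expanded to an actual argument there.

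Finally, the step you yourself flag as decisive — the verification that $W\in A_{p_0,r_0}$ with a quantitative bound — is not carried out. Your appeal to ``Jones factorisation, stability of $A_s$ under products with $A_1$ weights and fractional powers, and interaction of $A_1(u\,d\mu)$ with $u\in A_\infty$'' is the right family of tools in spirit, but the paper instead performs an explicit H\"older-on-cube-averages computation: it shows, e.g., that $0<\tfrac{p_0'r\gamma}{s}<1$, applies H\"older with that index to $\dashint_Q W^{-p_0'}$, and pairs the result with $[H^{p/(r\gamma)}w^{-p'}]_{A_1}\essinf_Q(\cdots)$ to arrive at $[W]_{A_{p_0,r_0}}\lesssim[H^{p/(r\gamma)}w^{-p'}]_{A_1}^{r\gamma/s}[w]_{A_{p,r}}^{r/r_0}$ (and analogously in the other cases). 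This exponent arithmetic — in particular checking that the auxiliary H\"older indices lie strictly in $(0,1)$ under \eqref{off-extrapol:compat}, and that each endpoint $p_0=1$, $p_0=\infty$, $r_0=\infty$ collapses to an $A_1$-statement for the right power of $W$ — is the bulk of the proof and cannot be waved away by citing general weight lemmas. As a blueprint your proposal is essentially correct and points at the right obstacles, but as a proof it is missing the computations that constitute the theorem's actual content; filling them in along your lines would likely reproduce the paper's argument after simplifying the $\kappa>0$ organization.
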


We observe that this result is on the nature of best possible in the sense that one cannot expect to reach the end-points $p=1$ and/or $r=\infty$. The fact that one cannot obtain $p=1$ is well-known. 
To see that we cannot extrapolate to $r=\infty$ we let $p_0=q_0=r_0=2$ (again $\gamma_{r_0,p_0}=1>0$) and consider the pairs $(|Hf|,|f|)$ with $f\in L^\infty_c(\re)$ and where $H$ is the Hilbert transform.  Note that \eqref{off-extrapol:hyp} follows since the Hilbert transform is bounded on $L^2(w^2)$ for every $w^2\in A_2$. If we could extrapolate to
$r=\infty$,  picking $p=q=r=\infty$ we would obtain $\|H f w\|_{L^\infty}\lesssim \|f w\|_{L^\infty(w)}$ for every $w\in A_{\infty,\infty}$, that is, for any $w^{-1}\in A_1$. Taking in particular $w\equiv 1$  we would obtain that $Hf\in L^\infty(\re^n)$ for every $f\in L^\infty_c(\re)$ leading contradiction.

Even more we can see that we cannot extrapolate to $r=\infty$ with $0<q<\infty$. This requires some extra work. For any $f\in L^\infty_c(\re)$,  let $E_f$ be the set of Lebesgue points for the function $|Hf|^2\in L^1(\re)$ so that $|\re\setminus E_f|=0$. Define, for every $x\in\re$ and $0<\tau<\infty$, the non-negative function $g_{x,\tau}=\tau^{-\frac12}\mathbf{1}_{(x-\tau/2, x+\tau/2)}$ so that $\|g_{x,\tau}\|_{L^2(\re)}=1$ and consider the family
\[
\mathscr{F}=\big\{(|Hf|\,g_{x,\tau}, |f|):\ f\in L^\infty_c(\re),\ x\in E_f,\ 0<\tau<\infty \big\}.
\]
Let $q_0=1$ and $p_0=r_0=2$, and note that $\gamma_{r_0,p_0}=1>0$. For every $w\in A_{2,2}$ (that is, $w^2\in A_2$) one has for every pair $(F,G)=(|Hf|\,g_{x,\tau}, f)\in\mathscr{F}$
\begin{multline*}
\|F w\|_{L^1(\re)}
=
\big\||Hf|\,g_{x,\tau} w\big\|_{L^1(\re)}
\le
\|Hf\|_{L^2(w^2)}\|g_{x,\tau}\|_{L^2(\re)}
\\
=
\|Hf\|_{L^2(w^2)}
\le
C_w \|f\|_{L^2(w^2)}
=
C_w \|G\|_{L^2(w^2)}.
\end{multline*}
If we could extrapolate to $r=\infty$ then we would pick $p=\infty$, $q=2$, and $r=\infty$ so that \eqref{off-extrapol:compat} holds to obtain that
$\|F w\|_{L^2(\re)}\lesssim \|Gw\|_{L^\infty(\re)}$ for all $(F,G)\in \mathscr{F}$ and $w\in A_{\infty,\infty}$, that is, $w^{-1}\in A_1$. We could take again $w\equiv 1$ to see that for every $(F,G)=(|Hf|\,g_{x,\tau}, f)\in\mathscr{F}$, that is, for every $f\in L^\infty_c$, $x\in E_f$ and $0<\tau<\infty$
\[
\left(\dashint_{(x_0-\tau/2,x_0+\tau/2)} |Hf(y)|^2\right)^{\frac12}
=
\|F\|_{L^2(\re)}
\le
C_0
\|G\|_{L^\infty(\re)}
=
\|f\|_{L^\infty(\re)}.
\]
Since $C_0$ does not depend on $f$, $x$ and $\tau$ (note that $C$ is the same for all the pairs in the family $\mathscr{F}$), and $x\in E_f$ we could let $\tau\to 0^+$ and conclude that
\[
|Hf(x)|\le C_0 \|f\|_{L^\infty},
\qquad\forall\, x\in E_f.
\]
Consequently, we would obtain that $Hf\in L^\infty$ for every $f\in L^\infty_c$ which is again a contradiction.

\begin{proof}[Proof of Theorem \ref{thm:offdiagonal}]
The case $1\le p_0<\infty$, $0<r_0<\infty$, and $1<p<\infty$ was proved in \cite[Theorem 5.1]{Duo}. Here we provide a somehow alternative argument which gives that case as well as the desired end-point estimates. Our first observation is that if $\gamma_{p_0,r_0}=0$ there is nothing to prove. Indeed this only happens when $p_0=1$, $r_0=\infty$ and hence \eqref{off-extrapol:compat} gives that $\frac1{p}=1+\frac1r$ which contradicts the facts that $p>1$ and $r<\infty$. Thus, from now on we assume that $\gamma_{p_0,r_0}>0$ and observe that  \eqref{off-extrapol:compat} allows us to easily write $\gamma:=\gamma_{p,r}=\gamma_{p_0,r_0}$.

\medskip

\textbf{Case 1:} $\frac1s:=\frac1p-\frac1{p_0}=\frac1q-\frac1{q_0}=\frac1r-\frac1{r_0}>0$, hence
$1<p<p_0\le\infty$, $0<q<q_0\le\infty$ and $0<r<r_0\le\infty$ (note that $p_0$ and/or $r_0$ and/or $q_0$ could be infinity).

Fix $(f,g)\in\mathscr{F}$ and $w\in A_{p,r}$. As observed above, one has $w^{-p'}\in A_{p'\gamma}$. Also, $p'\gamma=1+\frac{p'}{r}>1$ since $r<r_0$, hence if we set $M'h:=M(hw^{-p'})\,w^{p'}$ where $M$ is the Hardy-Littlewood maximal function one has that $M'$ is bounded on $L^{(p'\gamma)'}(w^{-p'})$, write $\|M'\|$ to denote its operator norm and introduce the Rubio de Francia algorithm
\[
\mathcal{R}'h=\sum_{k=0}^\infty \frac{(M')^{(k)} h}{2^k\,\|M'\|^k},
\]
where for $k\ge 1$, we write $(M')^{(k)}= M'\circ\dots\circ M'$ to denote $k$ iterations of $M'$ and for $k=0$ is the identity operator .
Based on this definition (see \cite{CMP} for more details) one can readily see that if $0\le h\in L^{(p'\gamma)'}(w^{-p'})$ then
\begin{equation}\label{R':1}
h\le \mathcal{R}'h,
\quad
\|\mathcal{R}'h\|_{L^{(p'\gamma)'}(w^{-p'})}\le 2\|h\|_{L^{(p'\gamma)'}(w^{-p'})},
\quad
[\mathcal{R}'h\,w^{-p'}]_{A_1}\le 2\|M'\|.
\end{equation}
Notice that in particular $0<\mathcal{R}'h<\infty$ a.e. if $h$ is non-trivial.

Next let us  observe that without loss of generality we may assume that
$0<\|gw\|_{L^{p}}<\infty$: if $\|gw\|_{L^{p}}=\infty$ there would be nothing to prove and if $\|gw\|_{L^{p}}=0$ then $g=0$ a.e. and by \eqref{off-extrapol:hyp} we would get that $f=0$ a.e. which would trivially yield the desired estimate. This implies that the auxiliary function $0\le h=gw^{p'}/\|gw\|_{L^p}$ clearly satisfies $\|h\|_{L^p(w^{-p'})}=1$. Set $H=\mathcal{R}'(h^{\frac{p}{(p'\gamma)'}})^{\frac{(p'\gamma)'}{p}}= \mathcal{R}'(h^{\frac{p}{r\gamma}})^{\frac{r\gamma}{p}} $ which satisfies that $0<H<\infty$ a.e. and by \eqref{R':1}
\begin{equation}\label{R':2}
h\le H,
\qquad
\|H\|_{L^{p}(w^{-p'})}\lesssim \|h\|_{L^{p}(w^{-p'})}=1,
\qquad
[H^{\frac{p}{r\gamma}}w^{-p'}]_{A_1}\le 2\|M'\|.
\end{equation}
Set $W=H^{-\frac{p}{s}}w^{1+\frac{p'}{s}}$ and we claim that $W\in A_{p_0,r_0}$. Assuming this momentarily we note that \eqref{off-extrapol:compat} and the definition of $s$ gives $\frac1q=\frac1{q_0}+\frac1s$. Hence we can use H\"older's inequality, \eqref{off-extrapol:hyp}, and \eqref{R':2} to obtain
\begin{multline*}
\|fw\|_{L^q}
=
\|(fW)\,(H^{\frac{p}{s}}w^{-\frac{p'}{s}})\|_{L^q}
\le
\|fW\|_{L^{q_0}}\,\|H^{\frac{p}{s}}w^{-\frac{p'}{s}}\|_{L^s}
\lesssim
\|g W\|_{L^{p_0}}
\\
\lesssim
\|gw\|_{L^p} \|Hw^{-p'}W\|_{L^{p_0}}
=
\|gw\|_{L^p} \|H^{1-\frac{p}{s}}w^{-p'(\frac1p-\frac1s)}\|_{L^{p_0}}
\lesssim
\|gw\|_{L^p},
\end{multline*}
where the last estimate is trivial when $p_0=\infty$ since $s=p$, and otherwise follows from \eqref{R':2} and the fact that $1-\frac{p}{s}=\frac{p}{p_0}$.

This gives the desired estimate and therefore it remains to show that $W=H^{-\frac{p}{s}}w^{1+\frac{p'}{s}}\in A_{p_0,r_0}$. If $r_0=\infty$, one can easily see that \eqref{off-extrapol:conc} and the definition of $s$ give that $s=r$ and
\[
\gamma
=
1+\frac1r-\frac1p
=
1-\frac1{p_0}=
\frac1{p_0'},
\qquad
1+\frac{p'}{s}
=
p'\left(\frac1{p'}+\frac1s\right)=
p'\left(1-\frac1{p_0}\right)=
\frac{p'}{p_0'}.
\]
Hence $W^{-p_0'}=H^{\frac{p}{r\gamma}}w^{-p'}$ and the last condition  in \eqref{R':2} readily yields $W^{-p_0'}\in A_1$. That is, $W\in A_{p_0,\infty}$ as desired.

Consider next the case $0<r_0<\infty$ and note that definition of $s$ implies that
\[
1+\frac{p'}s-\frac{p'r\gamma}{s}
=
1+\frac{p'}s-\frac{p'+r}{s}
=
\frac1{r}\left(\frac1r-\frac1s\right)
=\frac{r}{r_0}.
\]
On the other hand,  \eqref{off-extrapol:conc}  and the definition of $s$ also yield $\gamma=\frac1{r_0}+\frac1{p_0'}$ and
\begin{align*}
1-\frac{p_0'r\gamma}{s}
=
r\left(\frac1r-\frac{p_0'\gamma}{s}\right)
=
r\left(\frac1{r_0}-\frac{p_0'\gamma-1}{s}\right)
=
\frac{rp_0'}{r_0}\left(\frac1{p_0'}-\frac{1}{s}\right)
=
\frac{rp_0'}{r_0p'}.
\end{align*}
Note that the right-hand side  is positive and strictly smaller than $1$ since in this case $0<r<r_0<\infty$ and $1<p<p_0$. As a consequence, $0<\frac{p_0'r\gamma}{s}<1$ and $(\frac{s}{p_0'r\gamma})'= \frac{r_0p'}{rp_0'}$. We can then use H\"older's inequality and the previous calculations to conclude that
\begin{multline*}
\left(\dashint_{Q} W^{-p_0'}\,dx\right)^{\frac1{p_0'}}
=
\left(\dashint_{Q} \big(H^{\frac{p}{r\gamma}}w^{-p'}\big)^{\frac{p_0'r\gamma}{s}}\,w^{-\frac{p_0'r}{r_0}}dx\right)^{\frac1{p_0'}}
\\
\le
\left(\dashint_{Q} H^{\frac{p}{r\gamma}} w^{-p'}\,dx\right)^{\frac{r\gamma}{s}}
\left(\dashint_{Q} w^{-p'}\,dx\right)^{\frac{r}{r_0 p'}}
\\
\le
[H^{\frac{p}{r\gamma}} w^{-p'}]_{A_1}^{\frac{r\gamma}{s}}\essinf_{Q}(H^{\frac{p}{r\gamma}} w^{-p'})^{\frac{r\gamma}{s}}
\left(\dashint_{Q} w^{-p'}\,dx\right)^{\frac{r}{r_0 p'}}
.
\end{multline*}
This yields
\begin{align*}
&\left(\dashint_{Q} W^{r_0}\,dx\right)^{\frac1{r_0}}\left(\dashint_{Q} W^{-p_0'}\,dx\right)^{\frac1{p_0'}}
\\
&\qquad\le
[H^{\frac{p}{r\gamma}} w^{-p'}]_{A_1}^{\frac{r\gamma}{s}}
\left(\dashint_{Q} H^{\frac{r_0p}{s}} w^{-\frac{p'r_0 r\gamma}{s} }W^{r_0}\,dx\right)^{\frac1{r_0}}
\left(\dashint_{Q} w^{-p'}\,dx\right)^{\frac{r}{r_0 p'}}
\\
&\qquad=
[H^{\frac{p}{r\gamma}} w^{-p'}]_{A_1}^{\frac{r\gamma}{s}}
\left(\dashint_{Q} w^{r}\,dx\right)^{\frac1{r_0}}
\left(\dashint_{Q} w^{-p'}\,dx\right)^{\frac{r}{r_0p'}}
\\
&\qquad\le
[H^{\frac{p}{r\gamma}} w^{-p'}]_{A_1}^{\frac{r\gamma}{s}}\,[w]_{A_{p,r}}^{\frac{r}{r_0}}.
\end{align*}
Using \eqref{R':2} we conclude as desired that $W\in A_{p_0,r_0}$ and the proof of the present case is complete.

\medskip

\textbf{Case 2:} $0<q<\infty$  and $\frac1s:=\frac1{p_0}-\frac1p=\frac1{q_0}-\frac1q=\frac1{r_0}-\frac1r>0$, hence
$1\le p_0<p\le\infty$, $0<q_0<q<\infty$ and $0<r_0<r<\infty$ (note that $p$ could be infinity).

Fix $(f,g)\in\mathscr{F}$ and $w\in A_{p,r}$.  As observed above, one has $w^{-p'}\in A_{p'\gamma}$. Additionally, $p'\gamma=1+\frac{p'}r>1$ since $r<\infty$, hence $M$ is bounded on $L^{p'\gamma}(w^{-p'})$ and writing $\|M\|$ for its operator norm one can then introduce the Rubio de Francia algorithm
\[
\mathcal{R}h=\sum_{k=0}^\infty \frac{M^{(k)} h}{2^k\,\|M\|^k}.
\]
This (see \cite{CMP} for more details) readily gives that if $0\le h\in L^{p'\gamma}(w^{-p'})$ then
\begin{equation}\label{R:1}
h\le \mathcal{R}h,
\qquad
\|\mathcal{R}h\|_{L^{p'\gamma}(w^{-p'})}\le 2\|h\|_{L^{p'\gamma}(w^{-p'})},
\qquad
[\mathcal{R}h]_{A_1}\le 2\|M\|.
\end{equation}

On the other hand, notice that the definition of $s$ and \eqref{off-extrapol:compat} give $\frac1{q_0}=\frac1q+\frac1s$, hence $\frac{q}{q_0}=(\frac{s}{q_0})'$. Then  by duality there exists $0\le h\in L^{\frac{s}{q_0}}(w^q)$ with $\|h\|_{L^{\frac{s}{q_0}}(w^q)}=1$ such that
\[
\|fw\|_{L^q}=\|f^{q_0}\|_{L^{\frac{q}{q_0}}(w^q)}^{\frac1{q_0}}
=
\left(\int f^{q_0} h w^q\,dx\right)^{\frac1{q_0}}.
\]
Set $H=\mathcal{R}\Big(h^{\frac{s}{p'q_0\gamma}} w^{\frac{p'+q}{p'\gamma}}\Big)^{\frac{p'q_0\gamma}{s}}\,w^{-\frac{(p'+q)q_0}{s}}$ which satisfies by \eqref{R:1}
\begin{equation}\label{R:2}
h\le H,
\qquad
\|H\|_{L^{\frac{s}{q_0}}(w^{q})}\lesssim \|h\|_{L^{\frac{s}{q_0}}(w^{q})}=1,
\qquad
[H^{\frac{s}{p'q_0\gamma}} w^{\frac{p'+q}{p'\gamma}}]_{A_1}\le 2\|M\|.
\end{equation}
Let $W=H^{\frac1{q_0}}w^{\frac{q}{q_0}}$ and claim that $W\in A_{p_0,r_0}$. Assuming this momentarily we conclude that
\begin{multline*}
\|fw\|_{L^q}
=
\Big(\int f^{q_0} h w^q\,dx\Big)^{\frac1{q_0}}
=
\|fW\|_{L^{q_0}}
\lesssim
\|gW\|_{L^{p_0}}
=
\|(gw) (H^{\frac1{q_0}}w^{\frac{q}{q_0}-1})\|_{L^{p_0}}
\\
\le
\|gw\|_{L^{p}} \|H^{\frac1{q_0}}w^{\frac{q}{q_0}-1}\|_{L^{s}}
=
\|gw\|_{L^p}\|H\|_{L^{\frac{s}{q_0}}(w^q)}^{\frac1{q_0}}
\lesssim
\|gw\|_{L^p},
\end{multline*}
where we have used \eqref{R:2}, that $\frac1{p_0}=\frac1p+\frac1s$, H\"older's inequality and the fact that $s(\frac{q}{q_0}-1)=q$ by \eqref{off-extrapol:compat} and our choice of $s$.

To complete this case we just need to see that $W=H^{\frac1{q_0}}w^{\frac{q}{q_0}} \in A_{p_0,r_0}$. When $p_0=1$ one has $\gamma=r_0^{-1}$,  $s=p'$. Hence $\frac{s}{p' q_0 \gamma}=\frac{r_0}{q_0}$, and \eqref{off-extrapol:compat} yield
\[
\frac{p'+q}{p'\gamma}=r_0q\left(\frac1q+\frac1{p'}\right)
=
\frac{p'+q}{p'\gamma}=\frac{r_0q}{q_0},
\]
As a result,
$W^{r_0}=H^{\frac{s}{p'q_0\gamma}}w^{\frac{p'+q}{p'\gamma}}$ and \eqref{R:2} readily gives that $W^{r_0}\in A_1$, that is, $W\in A_{1,r_0}$ as desired.

On the other hand, if $p_0>1$ we observe that by \eqref{off-extrapol:compat} and the definition of $s$
\[
\frac{p'}{p_0'}=p'\left(1-\frac1{p_0}\right)
=p'\left(\frac1{p'}-\frac1{s}\right)
=
\frac{q}{q_0}-\frac{q}{s}-\frac{p'}{s}
=
\frac{q}{q_0}-\frac{p'+q}{s}
\]
and also that
\begin{multline*}
1-\frac{r_0p' \gamma}{s}
=
r_0\left(\frac1{r_0}-\frac{p'\gamma}{s}\right)
=
r_0\left(\frac1{r}+\frac1{s}-\frac{p'\gamma}{s}\right)
=
r_0\left(\frac1{r}-\frac{p'\gamma-1}{s}\right)
\\
=
r_0\left(\frac1{r}-\frac{p'}{rs}\right)
=
\frac{r_0p'}{r}\left(\frac1{p'}-\frac1s\right)
=
\frac{r_0p'}{rp_0'}.
\end{multline*}
Note that the right-hand side  of the last displayed equation is positive and  strictly smaller than $1$ since  $r_0<r<\infty$ and $1<p_0<p$. As a consequence, $0<\frac{r_0p' \gamma}{s}<1$ and $(\frac{s}{r_0 p'\gamma})'= \frac{rp_0'}{r_0p'}$. We can then use H\"older's inequality and the previous calculations to conclude that
\begin{multline*}
\left(\dashint_{Q} W^{r_0}\,dx\right)^{\frac1{r_0}}
=
\left(\dashint_{Q} H^{\frac{r_0}{q_0}}\,w^{\frac{r_0 q}{q_0}}\,dx\right)^{\frac1{r_0}}
=
\left(\dashint_{Q} \big(H^{\frac{s}{p'q_0\gamma}}w^{\frac{p'+q}{p'\gamma}}\big)^{\frac{r_0p'\gamma}{s}} \,w^{\frac{r_0p'}{p_0'}}\,dx\right)^{\frac1{r_0}}
\\
\le
\left(\dashint_{Q} H^{\frac{s}{p'q_0\gamma}}w^{\frac{p'+q}{p'\gamma}}\,dx\right)^{\frac{p'\gamma}{s}}
\left(\dashint_{Q} w^{r}\,dx\right)^{\frac{p'}{rp_0'}}
\\
\le
[H^{\frac{s}{p'q_0\gamma}}w^{\frac{p'+q}{p'\gamma}}]_{A_1}^{\frac{p'\gamma}{s}} \essinf_{Q} (H^{\frac{s}{p'q_0\gamma}}w^{\frac{p'+q}{p'\gamma}})^{\frac{p'\gamma}{s}} \left(\dashint_{Q} w^{r}\,dx\right)^{\frac{p'}{rp_0'}}.
\end{multline*}
This and \eqref{off-extrapol:compat} imply
\begin{align*}
&\left(\dashint_{Q} W^{r_0}\,dx\right)^{\frac1{r_0}} \left(\dashint_{Q} W^{-p_0'}\,dx\right)^{\frac1{p_0'}}
\\
&\qquad\le
[H^{\frac{s}{p'q_0\gamma}}w^{\frac{p'+q}{p'\gamma}}]_{A_1}^{\frac{p'\gamma}{s}}
\left(\dashint_{Q} w^{r}\,dx\right)^{\frac{p'}{rp_0'}}
\left(\dashint_{Q} H^{\frac{p_0'}{q_0}} w^{\frac{(p'+q)p_0'}{s} }W^{-p_0'}\,dx\right)^{\frac1{p_0'}}
\\
&\qquad=
[H^{\frac{s}{p'q_0\gamma}}w^{\frac{p'+q}{p'\gamma}}]_{A_1}^{\frac{p'\gamma}{s}}
\left(\dashint_{Q} w^{r}\,dx\right)^{\frac{p'}{rp_0'}}
\left(\dashint_{Q} w^{-p'}\,dx\right)^{\frac{1}{p_0'}}
\\
&\qquad\le
[H^{\frac{s}{p'q_0\gamma}}w^{\frac{p'+q}{p'\gamma}}]_{A_1}^{\frac{p'\gamma}{s}}\,[w]_{A_{\infty,r}}^{\frac{p'}{p_0'}}.
\end{align*}
Taking the supremum over all cubes and using \eqref{R:2} we obtain that $W\in A_{p_0,r_0}$.

\medskip

\textbf{Case 3:} $q=\infty$ and $\frac1s:=\frac1{p_0}-\frac1p=\frac1{q_0}=\frac1{r_0}-\frac1r>0$, hence $1\le p_0<p\le\infty$,
$0<q_0<\infty$, and $0<r_0<r<\infty$  (note that $p$ could also be infinity).

Again, fix $(f,g)\in\mathscr{F}$ and $w\in A_{p,r}$. We may assume that $\|gw\|_{L^p}<\infty$, otherwise there is nothing to prove.  Note that as above one has $w^{-p'}\in A_{p'\gamma}$. Additionally, $p'\gamma=1+\frac{p'}r>1$ since $r<\infty$, hence $M$ is bounded on $L^{p'\gamma}(w^{-p'})$ and writing $\|M\|$ for its operator norm one can then introduce the Rubio de Francia algorithm
\[
\mathcal{R}h=\sum_{k=0}^\infty \frac{M^{(k)} h}{2^k\,\|M\|^k}.
\]
This (see \cite{CMP} for more details) readily gives that if $0\le h\in L^{p'\gamma}(w^{-p'})$ then
\begin{equation}\label{R:3}
h\le \mathcal{R}h,
\qquad
\|\mathcal{R}h\|_{L^{p'\gamma}(w^{-p'})}\le 2\|h\|_{L^{p'\gamma}(w^{-p'})},
\qquad
[\mathcal{R}h]_{A_1}\le 2\|M\|.
\end{equation}

We fix $x_0\in \re^n$ and $0<\tau_0<\infty$ and let $h:=h_{x_0,\tau_0}:=|B(x_0,\tau_0)|^{-1}\mathbf{1}_{B(x_0, r_0)}$ which is a non-negative function satisfying $\|h\|_{L^1}=1$. Set $H:=\mathcal{R}\Big(h^{\frac{1}{p'\gamma}} w^{\frac{1}{\gamma}}\Big)^{p'\gamma}\,w^{-p'}$ which satisfies by \eqref{R:3}
\begin{equation}\label{R:4}
h\le H,
\qquad
\|H\|_{L^{1}}\lesssim \|h\|_{L^1}=1,
\qquad
[H^{\frac{1}{p'\gamma}} w^{\frac{1}{\gamma}}]_{A_1}\le 2\|M\|.
\end{equation}
Set $W:=H^{\frac1{q_0}}w$ and claim that $W\in A_{p_0,r_0}$. Assuming this momentarily we conclude that
\begin{multline*}
\left(\dashint_{B(x_0,\tau_0)} (f(x)w(x))^{q_0}dx\right)^{\frac1{q_0}}
=\Big(\int f^{q_0} h w^{q_0}\,dx\Big)^{\frac1{q_0}}
\le
\|fW\|_{L^{q_0}}
\lesssim
\|gW\|_{L^{p_0}}
\\
=
\|(gw) H^{\frac1{q_0}}\|_{L^{p_0}}
\le
\|gw\|_{L^{p}} \|H^{\frac1{q_0}}\|_{L^{q_0}}
\lesssim
\|gw\|_{L^p},
\end{multline*}
where we have used \eqref{R:4}, that $\frac1{p_0}=\frac1p+\frac1{q_0}$ and H\"older's inequality. Note that the implicit constants are all independent of $x_0$ and $\tau$. Hence, since we have that $\|gw\|_{L^p}<\infty$, we conclude that $fw\in L^{q_0}_{\rm loc}(\re^n)$. Thus, if we write $E_{fw}$ for the Lebesgue points of $(fw)^{q_0}$ we have that $|\re^n\setminus E_{fw}|=0$. Moreover if we let $x_0\in E_{fw}$ we conclude that
\[
f(x_0)w(x_0)=\lim_{\tau\to 0^+}\left(\dashint_{B(x_0,\tau_0)} (f(x)w(x))^{q_0}dx\right)^{\frac1{q_0}}
\lesssim
\|gw\|_{L^p},
\]
This eventually shows that $fw\in L^\infty(\re^n)$ with $\|fw\|_{L^\infty}\lesssim\|gw\|_{L^p}$ which is our desired estimate.

We are left with showing that $W:=H^{\frac1{q_0}}w\in A_{p_0,r_0}$. Consider first the case $p_0=1$. In this case we have that \eqref{off-extrapol:compat} implies that $p'=q_0$, $\gamma=r_0^{-1}$ and $\frac{r_0}{q_0}=\frac1{p' \gamma}$.
Thus, $ W^{r_0}=H^{\frac{1}{p'\gamma}}w^{\frac{1}{\gamma}}$ and \eqref{R:4} readily implies that $W^{r_0}\in A_1$, that is, $W\in A_{1,r_0}$ as desired.

Consider next the case $p_0>1$ and observe that \eqref{off-extrapol:compat} implies  $\frac1{p'}-\frac1{q_0}=\frac1{p_0'}$ and
\begin{multline*}
1-\frac{r_0p' \gamma}{q_0}
=
r_0\left(\frac1{r_0}-\frac{p'\gamma}{q_0}\right)
=
r_0\left(\frac1{r}+\frac1{q_0}-\frac{p'\gamma}{q_0}\right)
\\
=
r_0\left(\frac1{r}-\frac{p'\gamma-1}{q_0}\right)
=
\frac{r_0p'}{r}\left(\frac1{p'}-\frac1{q_0}\right)
=
\frac{r_0p'}{rp_0'}.
\end{multline*}
Note that the right-hand side  is positive and strictly smaller than $1$ since in this case $r_0<r$ and $1< p_0<p$. As a consequence, $0<\frac{r_0p' \gamma}{q_0}\le 1$ and $(\frac{q_0}{r_0 p'\gamma})'= \frac{rp_0'}{r_0p'}$. Using this and H\"older's inequality we obtain
\begin{multline*}
\left(\dashint_{Q} W^{r_0}\,dx\right)^{\frac1{r_0}}
=
\left(\dashint_{Q} H^{\frac{r_0}{q_0}}\,w^{r_0}\,dx\right)^{\frac1{r_0}}
=
\left(\dashint_{Q} \big(H^{\frac{1}{p'\gamma}}w^{\frac{1}{\gamma}}\big)^{\frac{r_0p'\gamma}{q_0}} \,w^{\frac{r_0p'}{p_0'}}\,dx\right)^{\frac1{r_0}}
\\
\le
\left(\dashint_{Q} H^{\frac{1}{p'\gamma}}w^{\frac{1}{\gamma}}\,dx\right)^{\frac{p'\gamma}{q_0}}
\left(\dashint_{Q} w^{r}\,dx\right)^{\frac{p'}{rp_0'}}
\\
\le
[H^{\frac{1}{p'\gamma}}w^{\frac{1}{\gamma}}]_{A_1}^{\frac{p'\gamma}{q_0}} \essinf_{Q} (H^{\frac{1}{p'\gamma}}w^{\frac{1}{\gamma}})^{\frac{p'\gamma}{q_0}} \left(\dashint_{Q} w^{r}\,dx\right)^{\frac{p'}{rp_0'}}.
\end{multline*}
This and \eqref{off-extrapol:compat} imply
\begin{align*}
&\left(\dashint_{Q} W^{r_0}\,dx\right)^{\frac1{r_0}} \left(\dashint_{Q} W^{-p_0'}\,dx\right)^{\frac1{p_0'}}
\\
&\qquad\le
[H^{\frac{1}{p'\gamma}}w^{\frac{1}{\gamma}}]_{A_1}^{\frac{p'\gamma}{q_0}}
\left(\dashint_{Q} w^{r}\,dx\right)^{\frac{p'}{rp_0'}}
\left(\dashint_{Q} H^{\frac{p_0'}{q_0}} w^{\frac{p'p_0'}{q_0} }W^{-p_0'}\,dx\right)^{\frac1{p_0'}}
\\
&\qquad=
[H^{\frac{1}{p'\gamma}}w^{\frac{1}{\gamma}}]_{A_1}^{\frac{p'\gamma}{q_0}}
\left(\dashint_{Q} w^{r}\,dx\right)^{\frac{p'}{rp_0'}}
\left(\dashint_{Q} w^{-p'}\,dx\right)^{\frac{1}{p_0'}}
\\
&\qquad\le
[H^{\frac{1}{p'\gamma}}w^{\frac{1}{\gamma}}]_{A_1}^{\frac{p'\gamma}{q_0}}\,[w]_{A_{p,r}}^{\frac{p'}{p_0'}}.
\end{align*}
Taking the supremum over all cubes and using \eqref{R:4} we obtain that $W\in A_{p_0,r_0}$. This completes the proof.
\end{proof}

\begin{Remark}
A careful read of the above argument reveals it works in the setting of spaces of homogeneous type, as there we have a theory of Muckenhoupt weights much as in the case of the Euclidean setting. Also, the argument can be easily extended to Muckenhoupt bases (see for instance \cite[Chapter 3]{CMP}). The proof can be easily adapted to that setting. Further details are left to the interested reader.
\end{Remark}

\subsection{Proof of Theorem \ref{theor:extrapol-general}}\label{section:proof-main}

To prove our main result we need some characterization of the $A_{\vec p, \vec r}$ classes. The following result extends \cite[Lemma 3.2]{LMO} to the end-point cases.
The proof can be carried out  \textit{mutatis mutandis} (with the appropriate changes of notation) and we leave the details to the interested reader.

\begin{Lemma}\label{lemma:main}
Consider $\vec p=(p_1,\dots, p_m)$ with $1\le p_1,\dots,p_m\le \infty$ and $\vec{r}=(r_1,\dots,r_{m+1})$
	with $1\le r_1,\dots,r_{m+1}< \infty$ such that $\vec{r}\preceq_\star\vec{p}$. Using the previous notation we assume that
	\begin{equation}\label{def:varrho}
	\frac1{\varrho}:=\frac 1{r_m}-\frac 1{r_{m+1}'}+\sum_{i=1}^{m-1}\frac1{p_i}
	=
	\frac1{\delta_m}+\frac1{\delta_{m+1}}
	>0,
	\end{equation}
	and for every $1\le i\le m-1$
	\begin{equation}\label{def:theta}
	\frac1{\theta_i}
	:=
	\frac{1-r}r-\frac1{\delta_i}
	=
	\left(\sum_{j=1}^{m+1} \frac1{\delta_j}\right)-\frac1{\delta_i}
	>0.
	\end{equation}
	Then the following hold:
	\begin{list}{$(\theenumi)$}{\usecounter{enumi}\leftmargin=.8cm
			\labelwidth=.8cm\itemsep=0.2cm\topsep=.1cm
			\renewcommand{\theenumi}{\roman{enumi}}}
		
		\item Given  $\vec{w}=(w_1,\dots,w_m)\in A_{\vec p, \vec r}$, write $w:=\prod_{i=1}^mw_i $ and
		set
		\begin{equation}\label{formula-1}
		\widehat{w}:=\Big(\prod_{i=1}^{m-1}w_i \Big)^{\varrho}
		\qquad
		\mbox{and}
		\qquad
		W
		:=
		w^{ r_m } \widehat{w}^{- \frac{r_m}{\delta_{m+1}}}
		=
		w_m^{ r_m }\widehat{w}^{\frac{r_m}{\delta_m}}
		\end{equation}
		Then,
		\begin{list}{$(\theenumi.\theenumii)$}{\usecounter{enumii}\leftmargin=.4cm
				\labelwidth=.8cm\itemsep=0.2cm\topsep=.1cm
				\renewcommand{\theenumii}{\arabic{enumii}}}

			\item $w_i^{ \theta_i  }\in A_{\frac{1-r}{r}\theta_i }$ with $\Big[w_i^{ \theta_i  }\Big]_{ A_{\frac{1-r}{r}\theta_i }} \le [\vec w]_{A_{\vec p, \vec r}}^{\theta_i }$, for every $1\le i\le m-1$.
			
			\item $\widehat{w}\in A_{\frac{1-r}{r}\varrho}$ with
			$[\widehat{w}]_{A_{\frac{1-r}{r}\varrho}}\le [\vec w]_{A_{\vec p, \vec r}}^{\varrho}$.
			
			\item $W\in A_{\frac{p_m}{r_m},\frac{\delta_{m+1}}{r_m}}(\widehat{w})$
			with $[W]_{A_{\frac{p_m}{r_m},\frac{\delta_{m+1}}{r_m}} (\widehat{w})}
			\le
			[\vec w]_{A_{\vec p, \vec r}}^{r_m}$.
		\end{list}

		\item Given $w_i^{ \theta_i }\in A_{\frac{1-r}{r}\theta_i}$,  $1\le i\le m-1$,
		such  that
		\begin{equation}\label{formula2-1}
		\widehat{w}=
		\Big(\prod_{i=1}^{m-1}w_i \Big)^{\varrho}
		\in A_{\frac{1-r}{r}\varrho}
		\end{equation}
		and
		$W\in A_{\frac{p_m}{r_m},\frac{\delta_{m+1}}{r_m}} (\widehat{w})$, let us set
		\begin{equation}\label{formula-2}
		w_m
		:=
		W^{\frac{1}{r_m}} \widehat{w}^{-\frac{1}{\delta_m}}.
		\end{equation}
		Then $\vec{w}=(w_1,\dots, w_m)\in A_{\vec p, \vec r}$ and, moreover,
		\[
		[\vec w]_{A_{\vec p, \vec r}}
		\le
		[W]_{A_{\frac{p_m}{r_m},\frac{\delta_{m+1}}{r_m}} (\widehat{w})}^{\frac1{r_m}}
		[\widehat{w}]_{A_{\frac{1-r}{r}\varrho}}^{\frac1 {\varrho}}
		\prod_{i=1}^{m-1}\Big[w_i^{ \theta_i }\Big]_{A_{\frac{1-r}{r}\theta_i}}^{\frac1{\theta_i}}.
		\]

		\item For any measurable function $f\ge0 $ and in the context of $(i)$ or $(ii)$ there hold
		\begin{equation}\label{LHS-rew}
		\|fw\|_{L^p }
		=
		\Big\|\Big(f\widehat{w}^{-\frac1{r_{m+1}'}}\Big)^{r_m} W\Big\|_{L^\frac{p}{r_m}(d\widehat{w})}^{\frac1{r_m}}
		\end{equation}
		and
		\begin{equation}\label{RHS-rew}
		\|fw_m\|_{L^{p_m} }
		=
		\Big\|\Big(f\widehat{w}^{-\frac1{r_m}}\Big)^{r_m}W\Big\|_{L^\frac{p_m}{r_m}(d\widehat{w})}^{\frac1{r_m}}.
		\end{equation}
	\end{list}
\end{Lemma}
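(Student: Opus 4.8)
The plan is to follow the proof of \cite[Lemma~3.2]{LMO} \textit{mutatis mutandis}; the only novelty is the end-point convention that an average $\dashint_Q u^{\delta}$ is read as $\esssup_Q u$ whenever the corresponding exponent is $\infty$ (that is, $p_i=r_i$ or $p=r_{m+1}'$, the latter including $p=\infty$ when $r_{m+1}=1$). Everything rests on the exponent identities encoded in \eqref{def:varrho}--\eqref{def:theta}: one checks immediately that $\frac{1-r}{r}\theta_i-1=\frac{\theta_i}{\delta_i}$, that $\frac{1-r}{r}\varrho-1=\varrho\sum_{j=1}^{m-1}\frac1{\delta_j}$, and that $\big(\frac{p_m}{r_m}\big)'=\frac{\delta_m}{r_m}$, while the standing hypotheses $\varrho^{-1}>0$ and $\theta_i^{-1}>0$ are exactly what guarantee that all the H\"older exponents appearing below are finite and nonnegative. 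Throughout I abbreviate $\frac1\sigma:=\sum_{j=1}^{m-1}\frac1{\delta_j}$, so that $\frac1\varrho+\frac1\sigma=\frac{1-r}{r}$.

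For part (i) the tool is iterated H\"older. By the first identity, $[w_i^{\theta_i}]_{A_{\frac{1-r}{r}\theta_i}}=\sup_Q\big[(\dashint_Q w_i^{\theta_i})^{1/\theta_i}(\dashint_Q w_i^{-\delta_i})^{1/\delta_i}\big]^{\theta_i}$; to bound the new average I would write $w_i=w\prod_{j\le m,\,j\ne i}w_j^{-1}$ and apply H\"older with $w^{\delta_{m+1}}$ and the $w_j^{-\delta_j}$ $(j\ne i)$, which is legitimate since $\frac1{\theta_i}=\frac1{\delta_{m+1}}+\sum_{j\le m,\,j\ne i}\frac1{\delta_j}$; multiplying the result by $(\dashint_Q w_i^{-\delta_i})^{1/\delta_i}$ reproduces exactly the $A_{\vec p,\vec r}$ functional, giving (i.1). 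For (i.2) one has $[\widehat w]_{A_{\frac{1-r}{r}\varrho}}=\sup_Q(\dashint_Q\widehat w)(\dashint_Q(\prod_{j\le m-1}w_j)^{-\sigma})^{\varrho/\sigma}$; the splitting $\widehat w=w^\varrho w_m^{-\varrho}$ bounds $(\dashint_Q\widehat w)^{1/\varrho}$ (H\"older with $w^{\delta_{m+1}},w_m^{-\delta_m}$) and $(\prod_{j\le m-1}w_j)^{-\sigma}=\prod_{j\le m-1}w_j^{-\sigma}$ bounds $(\dashint_Q(\prod w_j)^{-\sigma})^{1/\sigma}$, again recovering the $A_{\vec p,\vec r}$ functional. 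For (i.3), \eqref{formula-1} gives $W^{\delta_{m+1}/r_m}\widehat w=w^{\delta_{m+1}}$ and $W^{-\delta_m/r_m}\widehat w=w_m^{-\delta_m}$, hence $\dashint_Q W^{\delta_{m+1}/r_m}\,d\widehat w=(\dashint_Q w^{\delta_{m+1}})/(\dashint_Q\widehat w)$ and likewise for the companion term, so that $[W]_{A_{\frac{p_m}{r_m},\frac{\delta_{m+1}}{r_m}}(\widehat w)}^{1/r_m}=\sup_Q(\dashint_Q\widehat w)^{-1/\varrho}(\dashint_Q w^{\delta_{m+1}})^{1/\delta_{m+1}}(\dashint_Q w_m^{-\delta_m})^{1/\delta_m}$; the offending factor $(\dashint_Q\widehat w)^{-1/\varrho}$ is absorbed using the pointwise-in-$Q$ inequality $(\dashint_Q\widehat w)^{1/\varrho}\prod_{j\le m-1}(\dashint_Q w_j^{-\delta_j})^{1/\delta_j}\ge1$, which is just H\"older applied to $1=\dashint_Q 1$: the integrand $\widehat w^{1/\varrho}\prod_{j\le m-1}w_j^{-1}$ is the constant $1$, so one may rescale the exponents $\{1/\varrho\}\cup\{1/\delta_j\}_{j\le m-1}$ (whose sum is $\frac{1-r}{r}$) to sum to $1$, apply H\"older, and then raise back to the power $\frac{1-r}{r}$.

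Part (ii) runs the same computation in reverse. From \eqref{formula2-1}--\eqref{formula-2} one has $w=W^{1/r_m}\widehat w^{1/\delta_{m+1}}$ and $w_m=W^{1/r_m}\widehat w^{-1/\delta_m}$, so, rewriting averages over $d\widehat w$ as above, for every cube $Q$
\[
\Big(\dashint_Q w^{\delta_{m+1}}\Big)^{\frac1{\delta_{m+1}}}\prod_{i=1}^m\Big(\dashint_Q w_i^{-\delta_i}\Big)^{\frac1{\delta_i}}\le [W]_{A_{\frac{p_m}{r_m},\frac{\delta_{m+1}}{r_m}}(\widehat w)}^{\frac1{r_m}}\Big(\dashint_Q\widehat w\Big)^{\frac1\varrho}\prod_{i=1}^{m-1}\Big(\dashint_Q w_i^{-\delta_i}\Big)^{\frac1{\delta_i}} .
\]
It then remains to bound the trailing factor by $[\widehat w]_{A_{\frac{1-r}{r}\varrho}}^{1/\varrho}\prod_{i=1}^{m-1}[w_i^{\theta_i}]_{A_{\frac{1-r}{r}\theta_i}}^{1/\theta_i}$: one uses the $A$-conditions as one-sided bounds, $(\dashint_Q\widehat w)^{1/\varrho}\le[\widehat w]_{A_{\frac{1-r}{r}\varrho}}^{1/\varrho}(\dashint_Q(\prod_{j\le m-1}w_j)^{-\sigma})^{-1/\sigma}$ and $(\dashint_Q w_i^{-\delta_i})^{1/\delta_i}\le[w_i^{\theta_i}]_{A_{\frac{1-r}{r}\theta_i}}^{1/\theta_i}(\dashint_Q w_i^{\theta_i})^{-1/\theta_i}$, after which the leftover factor $(\dashint_Q(\prod_{j\le m-1}w_j)^{-\sigma})^{-1/\sigma}\prod_{i\le m-1}(\dashint_Q w_i^{\theta_i})^{-1/\theta_i}$ is $\le1$ by the same ``H\"older applied to $1=\dashint_Q 1$'' device (now the relevant exponents sum to $(m-1)\frac{1-r}{r}$, but this is harmless precisely because the integrand is constant). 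Taking the supremum over $Q$ and invoking \eqref{formula-1} gives the asserted bound on $[\vec w]_{A_{\vec p,\vec r}}$. Part (iii) is then a direct substitution of the two identities for $w$ and $w_m$ into $\|fw\|_{L^p}$ and $\|fw_m\|_{L^{p_m}}$ together with $\|F\|_{L^t(d\widehat w)}^t=\int F^t\,d\widehat w$; the powers match because $\frac1p-\frac1{r_{m+1}'}=\frac1{\delta_{m+1}}$ and $\frac1{r_m}-\frac1{p_m}=\frac1{\delta_m}$, and the borderline cases $p_m=\infty$ and $p=r_{m+1}'$, where the norms on the right become $L^\infty$-norms, are checked using that $d\widehat w$ and Lebesgue measure have the same null sets.

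The individual steps above are each a one- or two-line H\"older estimate, so I expect the only genuinely delicate point to be the uniform treatment of the several end-point regimes ($p_i=r_i$, $p=r_{m+1}'$, $r_{m+1}=1$, $p\le1$), in which certain averages collapse to essential suprema and each H\"older step must be replaced by its $\esssup$ analogue. The definitions of the classes $A_{\vec p,\vec r}$ and $A_{p,r}(\mu)$ were arranged in Section~\ref{sec:multi} exactly so that this causes no trouble, and the argument is then word-for-word that of \cite[Lemma~3.2]{LMO}.
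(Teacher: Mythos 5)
Your proposal is correct and takes essentially the same approach as the paper, which in fact gives no explicit proof of Lemma~\ref{lemma:main} at all: it simply states that the argument is that of \cite[Lemma~3.2]{LMO} \textit{mutatis mutandis} and leaves the details to the reader. You have filled in exactly those details — the exponent identities, the iterated H\"older estimates with the "rescale to sum to $1$" device, the transfer to $d\widehat w$-averages, and the $\esssup$ reinterpretations at the end-points — and they all check out.
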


\medskip

In the previous result we are assuming that $\delta_{m+1}^{-1}$ and $\delta_{m}^{-1}$ cannot be simultaneously zero, but they could be zero individually, in which case some of the statements require appropriate interpretations which are left to the interest reader.  We also note that the assumption $\varrho^{-1}>0$, leads to the fact that the class $A_{\frac{p_m}{r_m},\frac{\delta_{m+1}}{r_m}} (\widehat{w})$ is non-trivial ---recall that, as observed above, $W\in A_{1,\infty}(\widehat{w})$ means that $W\approx 1$ almost everywhere.

\medskip

\begin{proof}[Proof of Theorem \ref{theor:extrapol-general}]
	The argument is almost identical to that in \cite[Proof of Theorem 1.1]{LMO} with the difference that we allow the $p_i$'s or $q_i$'s to be infinity,
	and this requires us to use Theorem \ref{thm:offdiagonal} in place of \cite[Theorem 3.1]{LMO} (which is really \cite[Theorem 5.1]{Duo}) and Lemma \ref{lemma:main} in place of \cite[Lemma 3.2]{LMO}. There is however a subtle point that we would like to address here. In the present scenario, to use Lemma \ref{lemma:main} we need to assume that $\varrho^{-1}>0$, while in \cite[Lemma 3.2]{LMO} this condition follows automatically from the assumption that $\delta_{m+1}^{-1}>0$ since there it was required that $p>r_{m+1}'$ (using the notation there $\vec{r}\preceq\vec{p}$ ). Thus we consider two possible scenarios $r_{m+1}'>p$ or $r_{m+1}'=p$.
	
	Assume first that  $r_{m+1}'>p$ (that is, $\vec{r}\preceq\vec{p}$), hence $\delta_{m+1}^{-1}>0$.  We just follow \cite[Proof of Theorem 1.1]{LMO}, an argument which changes one exponent at a time, with the alterations pointed out above after noting that implicit in that scheme it is shown that the intermediate exponents $\vec{t}=(t_1,\dots, t_m)$ satisfy $\vec{r}\preceq\vec{t}$, that is,  $r_{m+1}'>t=(\sum_{i=1}^m\frac1{t_i})^{-1}$ since in the present case
	$\vec{r}\preceq\vec{p}$ and $\vec{r}\prec\vec{q}$. This means that in the successive uses of Lemma \ref{lemma:main} we always have that the corresponding $\varrho^{-1}>0$ and in the iteration argument we never reach the end-point $r_{m+1}'$ in the target space.
	
	The second scenario is that on which $p=r_{m+1}'$, that is,  $\delta_{m+1}^{-1}=0$. Let $\vec q=(q_1,\dots,q_m)$ satisfy $\vec{r}\prec\vec{q}$ so that $p=r_{m+1}'>q$. Let $\mathcal{I}=\{i: 1\le i\le m, p_i>q_i\}$ and observe that $\mathcal{I}\neq\emptyset$ since otherwise we get a contradiction:
	\[
	\frac1{r_{m+1}'}=\frac1{p}=\sum_{i=1}^m \frac1{p_i}\ge \sum_{i=1}^m \frac1{q_i}=\frac1{q}>\frac1{r_{m+1}'}.
	\]
	Thus rearranging the $f_i$'s if needed we may assume that $p_m>q_m>r_m$ ---since $\vec{r}\prec\vec{q}$. In particular $\delta_m^{-1}>0$ and Lemma \ref{lemma:main} applies. This allows us to follow \textbf{Step 1} in  \cite[Proof of Theorem 1.1]{LMO} \textit{mutatis mutandis} with the modifications pointed out above to obtain the desired estimate for the exponent $\vec{t}=(t_1,\dots, t_m)$ with $t_i=p_i$ for $1\le i\le m$ and $t_m=q_m$ and the associated class of weights $A_{\vec{t},\vec{r}}$. Moreover,
	\[
	\frac1t
	=
	\sum_{i=1}^{m} \frac1{t_i}
	=
	\sum_{i=1}^{m-1} \frac1{p_i} +\frac1{q_m}
	>
	\sum_{i=1}^{m} \frac1{p_i}
	=
	\frac1{p}
	=
	\frac1{r_{m+1}'}.
	\]
	Hence $r_{m+1}'>t$, that is, $\vec{r}\preceq\vec{t}$  and we can extrapolate from this exponent to change the other entries in $\vec{t}$ much as in the first scenario above. Let us observe that, again, at any step in the iteration we will never reach the end-point $r_{m+1}'$ in the target space. Further details are left to the interested reader.

	To complete the proof we sketch how to establish \eqref{extrapol:vv}. The proof is almost identical to \cite[Proof of Theorem 1.1]{LMO}, the only difference is that here some of the $s_i$'s could be infinity. If that is the case we just need to observe that by assumption not all the $s_i$ can be infinity (otherwise $s=0$), hence
\begin{multline*}
\Big(\sum_j \prod_{i=1}^{m} \|f_i^j v_i\|_{L^{s_i}}^s\Big)^{\frac1s}
\le
\prod_{i:s_i=\infty}  \Big\|\{f_i^j v_i\}_j\Big\|_{L^{\infty}_{\ell^\infty}}
\Big(\sum_j \prod_{i: s_i\neq \infty} \|f_i^j v_i\|_{L^{s_i}}^s\Big)^{\frac1s}
\\
\le
\prod_{i:s_i=\infty}  \Big\|\{f_i^j v_i\}_j\Big\|_{L^{\infty}_{\ell^\infty}}
\prod_{i: s_i\neq \infty} \Big(\sum_j \|f_i^j v_i\|_{L^{s_i}}^{s_i}\Big)^{\frac1{s_i}}
=
\prod_{i=1}^m  \Big\|\{f_i^j v_i\}_j\Big\|_{L^{s_i}_{\ell^{s_i}}}.
\end{multline*}
With this estimate in hand the proof can be completed in exactly the same manner and we omit the details.

\end{proof}

\section{Calder\'on-Zygmund operators, the bilinear Hilbert transform, and sparse forms}\label{sec:apps}

In \cite{LMO} several applications were given to show that extrapolation can be used to provide almost trivial proofs of known results and also to obtain new estimates. With our main result in this paper we can easily complete the picture and obtain the end-point cases. Here we will just indicate the resulting estimates, see \cite{LMO} and the references therein for more details and the precise definitions.

We start with $T$ being a multilinear Calder\'on-Zygmund operator. Applying Theorem \ref{theor:extrapol-general:CZO} together with the weighted estimates from \cite{LOPTT} one immediately obtains \eqref{extrapol:vv*}. This should be compared with \cite[Section 2.1]{LMO} where all the $q_i$'s and $s_i$'s are assumed to be finite.

Our second example is as follows. Fix $\vec{r}=(r_1,\dots,r_{m+1})$, with  $r_i\ge 1$ for $1\le i\le m+1$, and $\sum_{i=1}^{m+1}\frac 1{r_i}>1$, and a sparsity constant $\zeta\in(0,1)$.
Let $T$ be an operator 	so that for every $f_1,\dots ,f_m,h\in C_c^\infty(\re^n)$
\begin{equation}\label{sparse-domination}
\left|\int_{\re^n} h\,T (f_1, \dots, f_m)dx\right|
\lesssim
\sup_{\mathcal{S}}\Lambda_{\mathcal{S},\vec{r}}(f_1,\dots,f_m,h),
\end{equation}
where the sup runs over all sparse families with sparsity constant $\zeta$. In \cite[Corollary 2.15]{LMO} it was shown that
for all exponents $\vec q=(q_1,\dots,q_m)$, with $1<q_1,\dots,q_m<\infty$ and $\vec{r}\prec\vec{q}$, for all weights $\vec v=(v_1,\dots,v_m) \in A_{\vec q,\vec{r}}$,
and for all $f_1,\dots, f_m\in C_c^\infty(\re^n)$ one has
\begin{equation}\label{extrapol:C:CZO}
\|T(f_1,\dots, f_m)v\|_{L^{q}} \lesssim \prod_{i=1}^m \|f_i v_i\|_{L^{q_i}},
\end{equation}
where $\frac1q:=\frac1{q_1}+\dots+\frac1{q_m}$ and $v:=\prod_{i=1}^mv_i$. From this and Theorem \ref{theor:extrapol-general} we can immediately allow the $q_i$'s to be infinity (provided that $q<\infty$). Moreover, we obtain \eqref{extrapol:vv*} for all exponents $\vec q=(q_1,\dots,q_m)$, with $1<q_1,\dots,q_m\le \infty$, $\vec{r}\prec\vec{q}$, and $\frac1q:=\frac1{q_1}+\dots+\frac1{q_m}>0$;
$\vec s=(s_1,\dots,s_m)$, with $1<s_1,\dots,s_m\le \infty$, $\vec{r}\prec\vec{q}$, and $\frac1s:=\frac1{s_1}+\dots+\frac1{s_m}>0$; and for all weights $\vec v=(v_1,\dots,v_m) \in A_{\vec q,\vec{r}}$. Again, comparing with \cite[Corollary 2.15]{LMO}, here we could have that some of the $q_i$'s (but not all) and/or some of the $s_i$'s (but not all) are infinity.

A particular case of interest is that of rough bilinear singular integrals introduced by Coifman and Meyer and further studied by \cite{GHH}. As explained in \cite[Section 2.4]{LMO}, either from the weighted estimates obtained in \cite{CHS} or from the sparse domination from \cite{Barron} one easily gets an extension of \cite[Corollary 2.17]{LMO} covering the end-point cases.

Our next example is the bilinear Hilbert transform, denoted by $BH$, which can be framed within the previous class of operators that are controlled by a sparse form as above (see \cite[Theorem 2]{CPO2016}). From that, repeating the argument in \cite[Section 2.5]{LMO} but invoking Theorem \ref{theor:extrapol-general} we obtain the following estimates. Let
$\vec{r}=(r_1,r_2,r_3)$ be such that $1<r_1,r_2,r_3<\infty$ and
\begin{equation}\label{cond-adm:corol}
\frac1{\min\{r_1,2\}}+\frac1{\min\{r_2,2\}}+\frac1{\min\{r_3,2\}}<2.
\end{equation}
Let $\vec{p}=(p_1,p_2)$, $\vec{s}=(s_1,s_2)$  with $1<p_1, p_2, s_1,s_2\le\infty$ be so that  $\frac1p:=\frac1{p_1}+\frac1{p_2}>0$,  $\frac1s:=\frac1{s_1}+\frac1{s_2}>0$.
If $\vec{r}\prec\vec{p}$ and $\vec{w}= (w_1, w_2)\in A_{\vec{p},\vec{r}}$  then
\[
\|BH(f,g) w_1w_2\|_{L^p}
\lesssim
\|f w_1\|_{L^{p_1}}\,\|g w_1\|_{L^{p_2}}.
\]
Moreover, if additionally  $\vec{r}\prec\vec{s}$ then
\[
\Big\|\big\{BH(f_j,g_j) w_1 w_2\big\}_j\Big\|_{L^{p}_{\ell^{s}}}
\lesssim
\Big\|\big\{f_j w_1\big\}_j\Big\|_{L^{p_1}_{\ell^{s_1}}} \Big\|\big\{g_j w_2\big\}_j\Big\|_{L^{p_2}_{\ell^{s_2}}}.
\]
Again, here we can allow $p_1$ or $p_2$ (but not both) and/or  $s_1$ or $s_2$ (but not both) to be infinity. The same kind of argument allows us to readily get iterated vector-valued inequalities in spaces of the form $L^{p}_{\ell^{s}_{\ell^t}}$. All these should be compared with the helicoidal method developed in \cite{BM1, BM2, BM3}, on which they prove all these estimates by some delicate discretization arguments. In a nutshell once we have the estimates from \cite[Theorem 2]{CPO2016} in hand, our powerful method based on extrapolation  easily gives all the desired estimates, including the vector-valued ones, and does not require to use any further fine analysis or decomposition of the operator. This occurs because extrapolation is not something related to operators, is a property about families o functions satisfying weighted norm inequalities.

Finally, our method allows us to deal with commutators of the previous operators with BMO functions. In \cite[Section 2.6]{LMO}, the method developed in \cite{BMMST}, was further pushed to obtain \cite[Theorem 2.22]{LMO}. The latter in conjunction with Theorem \ref{theor:extrapol-general} easily yields the following extension where both in the hypotheses and the conclusions one can include the end-points:

\begin{Corollary}\label{corol:multilinear:comm}
	Let $T$ be an $m$-linear operator and let $\vec{r}=(r_1,\dots,r_{m+1})$, with $1\le r_1,\dots,r_{m+1}<\infty$. Assume that there exists $\vec p=(p_1,\dots, p_m)$, with $1\le p_1,\dots, p_m\le \infty$ and $\vec{r}\preceq_\star\vec{p}$, such that for all $\vec{w} = (w_1, \dots, w_m)\in A_{\vec{p},\vec{r}}$, we have
	\begin{equation}
	\label{vector-weight}
	\|T(f_1, f_2,\dots, f_m) w\|_{L^p} \lesssim \prod_{i=1}^m \|f_i w_i\|_{L^{p_i}},
	\end{equation}
	where $\frac1p:=\frac1{p_1}+\dots+\frac1{p_m}$ and $w:=\prod_{i=1}^m w_i$.
	
	Then, 	for all exponents $\vec q=(q_1,\dots,q_m)$, with $1< q_1,\dots, q_m\le \infty$ so that $\vec{r}\prec\vec{q}$ and $\frac1q:=\frac1{q_1}+\dots+\frac1{q_m}>0$, for all weights $\vec v=(v_1,\dots, v_m) \in A_{\vec q,\vec{r}}$, for all $\textbf{b} = (b_1, \dots, b_m) \in  {\rm BMO}^m$, and for each multi-index $\alpha$, we have
	\begin{equation}
	\label{multi-commutator-II}
	\|[T, \textbf{b}]_\alpha (f_1, f_2,\dots, f_m) v\|_{L^q}
	\lesssim
	\prod_{i=1}^m \|b_i\|^{\alpha_i}_{{\rm BMO}} \|f_i v_i\|_{L^{q_i}},
	\end{equation}
	where $v:=\prod_{i=1}^m v_i$. Moreover if  $\vec s=(s_1,\dots,s_m)$, with $1< s_1,\dots, s_m\le \infty$ so that  $\vec{r}\prec\vec{s}$ and	$\frac1s:=\frac1{s_1}+\dots+\frac1{s_m}>0$,
	then
	\begin{equation}\label{multi-comm:VV}
	\Big\|\big\{ [T, \textbf{b}]_\alpha (f_1^j, f_2^j,\dots, f_m^j)  v\big\}_j\Big\|_{L^{p}_{\ell^{s}}}
	\lesssim
	\prod_{i=1}^m \|b_i\|^{\alpha_i}_{{\rm BMO}} \Big \|\big\{f_i^j v_i\big\}_j\Big\|_{L^{p_i}_{\ell^{s_i}}}.
	\end{equation}
\end{Corollary}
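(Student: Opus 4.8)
The plan is to deduce Corollary \ref{corol:multilinear:comm} as an immediate consequence of Theorem \ref{theor:extrapol-general} applied to a cleverly chosen extrapolation family, in exactly the spirit of \cite[Section 2.6]{LMO}. The starting point is \cite[Theorem 2.22]{LMO}, which, under hypothesis \eqref{vector-weight}, already gives the commutator bound \eqref{multi-commutator-II} for \emph{all} finite exponents $\vec q=(q_1,\dots,q_m)$ with $\vec r\prec\vec q$ and all $\vec v\in A_{\vec q,\vec r}$. So the only new content here is to (a) promote those finite-exponent commutator estimates to the end-point cases where some (but not all) $q_i=\infty$, and (b) upgrade the resulting scalar estimates to the vector-valued inequality \eqref{multi-comm:VV}. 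Both are handled by one application of Theorem \ref{theor:extrapol-general}.

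First I would fix $\textbf{b}=(b_1,\dots,b_m)\in\mathrm{BMO}^m$ and a multi-index $\alpha$, and introduce the extrapolation family
\[
\mathcal F
=
\Big\{\big(|[T,\textbf{b}]_\alpha(f_1,\dots,f_m)|,\ |f_1|,\dots,|f_m|\big):\ f_i\in C_c^\infty(\re^n)\Big\},
\]
or rather a suitably truncated version thereof (replacing the first entry by $\min\{|[T,\textbf{b}]_\alpha(f_1,\dots,f_m)|, N\}\mathbf 1_{B(0,N)}$ and letting $N\to\infty$ at the end via monotone convergence) so that all the norms appearing in \eqref{extrapol:H} are a priori finite — this is the standard device from \cite{CMP} needed to make the abstract extrapolation machinery literally applicable. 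With this family, \cite[Theorem 2.22]{LMO} says precisely that the hypothesis \eqref{extrapol:H} of Theorem \ref{theor:extrapol-general} holds, with the constant being $\varphi([\vec w]_{A_{\vec p,\vec r}})$ times the structural factor $\prod_{i=1}^m\|b_i\|_{\mathrm{BMO}}^{\alpha_i}$ (which, being independent of the weight, can be absorbed into $\varphi$ or simply carried along). Then Theorem \ref{theor:extrapol-general} immediately yields both the scalar conclusion \eqref{extrapol:C} — which after undoing the truncation is exactly \eqref{multi-commutator-II}, now including the end-point exponents — and the vector-valued conclusion \eqref{extrapol:vv}, which is \eqref{multi-comm:VV}.

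The one genuinely delicate point — and the step I expect to be the main obstacle — is verifying that \cite[Theorem 2.22]{LMO} can indeed be invoked as the hypothesis \eqref{extrapol:H} in the required generality: that result is stated for finite $q_i$'s, whereas here $\vec p$ may already have some $p_i=\infty$ and even $p=r_{m+1}'$. One must check that the BMO-commutator argument of \cite{BMMST,LMO} survives verbatim when some exponents are infinite, i.e. that the weighted commutator estimate \eqref{multi-commutator-II} holds at the level of the \emph{hypothesis} exponents $\vec p$ and not merely at finite exponents. In practice this is not an issue because the commutator estimate is itself derived from \eqref{vector-weight} by the Cauchy-integral / conjugation trick of Coifman–Rochberg–Weiss, perturbing the weights within $A_{\vec p,\vec r}$; that trick is insensitive to whether individual $p_i$ equal infinity, since what matters is the openness of the class $A_{\vec p,\vec r}$ in the relevant parameters, which holds here. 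Once this compatibility is noted, everything else is a direct citation of Theorem \ref{theor:extrapol-general}, and the remaining bookkeeping (the truncation argument, tracking the $\|b_i\|_{\mathrm{BMO}}^{\alpha_i}$ factors, and the trivial manipulation of $\ell^\infty$ norms already carried out at the end of the proof of Theorem \ref{theor:extrapol-general}) is routine and can be left to the reader.
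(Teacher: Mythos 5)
Your proposal has the right ingredients but a genuine gap at exactly the point you flag as ``the main obstacle,'' and your attempted resolution is not the one the paper uses (nor is it rigorous as written). You want to apply \cite[Theorem 2.22]{LMO} directly with the hypothesis exponent $\vec p$ of \eqref{vector-weight} to conclude that the commutator estimates hold at all finite $\vec q$ with $\vec r\prec\vec q$. But \cite[Theorem 2.22]{LMO} takes as \emph{input} a weighted estimate for $T$ at an exponent tuple with all entries finite, whereas in Corollary \ref{corol:multilinear:comm} the hypothesis $\vec p$ is allowed to have some $p_i=\infty$ (and even $p=r_{m+1}'$). You notice this, but then wave it away by asserting that the Coifman--Rochberg--Weiss conjugation argument underlying \cite{BMMST,LMO} is ``insensitive to whether individual $p_i$ equal infinity.'' That is precisely the thing that would need to be proved, and it is not what the paper does.

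The paper avoids having to reopen \cite[Theorem 2.22]{LMO} at all: it applies Theorem \ref{theor:extrapol-general} \emph{first}, to the non-commutator estimate \eqref{vector-weight} itself, to produce a version of that estimate at some auxiliary exponent $\vec t$ with all $t_i$ finite (and $\vec r\prec\vec t$). Only \emph{then} does it invoke \cite[Theorem 2.22]{LMO}, at the finite exponent $\vec t$ where that theorem literally applies, obtaining \eqref{multi-commutator-II}--\eqref{multi-comm:VV} at finite exponents. A second application of Theorem \ref{theor:extrapol-general} then removes the finiteness restriction and gives the vector-valued inequality. So the correct proof uses the new extrapolation theorem twice (before and after the cited commutator theorem), not once. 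Your single-application scheme would only be correct if you actually carried out the re-examination of the commutator argument at infinite exponents; absent that, it does not close.
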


\begin{proof}
	We first use \eqref{vector-weight} and Theorem \ref{theor:extrapol-general}	to see that for all exponents $\vec t=(t_1,\dots,t_m)$, with $1< t_1,\dots, t_m\le \infty$ so that $\vec{r}\prec\vec{t}$ and $\frac1t:=\frac1{t_1}+\dots+\frac1{t_m}>0$, and for all weights $\vec v=(v_1,\dots, v_m) \in A_{\vec t,\vec{r}}$ there holds
	\begin{equation}
	\label{vector-weight:cqafcr}
	\|T(f_1, f_2,\dots, f_m) v\|_{L^t} \lesssim \prod_{i=1}^m \|f_i v_i\|_{L^{t_i}},
	\end{equation}
	where $v:=\prod_{i=1}^m v_i$. In particular, this estimate is valid for a particular choice  of $\vec{t}$ which additionally satisfies $1<t_1,\dots, t_m<\infty$. This allows us to invoke \cite[Theorem 2.22]{LMO} to see that \eqref{multi-comm:VV} is valid for the same exponents as in the statement with the additionally assumption that
	$1<q_1,\dots, q_m<\infty$. Another use of Theorem \ref{theor:extrapol-general} immediately remove that restriction and also yields the vector-valued inequalities in \eqref{multi-comm:VV}. This completes the proof.
\end{proof}

As a consequence of this result and all the applications considered above we can obtain that \eqref{multi-commutator-II} and \eqref{multi-comm:VV} hold in the following scenarios:
\begin{list}{$\bullet$}{\leftmargin=.6cm\labelwidth=.6cm\itemsep=0.2cm\topsep=.1cm}
	
	\item $T$ is an $m$-linear Calder\'on-Zygmund operator, $\vec q=(q_1,\dots,q_m)$,  $\vec{s}=(s_1,\dots, s_m)$,  with $1<q_i,s_i\le \infty$, $1\le i\le m$, so that $\frac1q:=\frac1{q_1}+\dots+\frac1{q_m}>0$ and $\frac1s:=\frac1{s_1}+\dots+\frac1{s_m}>0$, and $\vec v \in A_{\vec q }$. The same applies to a bilinear rough singular integral as in \cite[Section 2.4]{LMO} in the case $m=2$.
	
	\item $T$ is any linear operators satisfying \eqref{sparse-domination}, $\vec q=(q_1,\dots,q_m)$, $\vec s=(s_1,\dots,s_m)$, with $\vec{r}\prec\vec{q}$ and $\vec{r}\prec\vec{s}$ so that $\frac1q:=\frac1{q_1}+\dots+\frac1{q_m}>0$ and $\frac1s:=\frac1{s_1}+\dots+\frac1{s_m}>0$, and $\vec v=(v_1,\dots, v_m)  \in A_{\vec q,\vec{r}}$.
\end{list}	
These are respectively extensions of \cite[Corollaries 2.25 and 2.26]{LMO} where we are now able to consider the end-point cases, and these estimates are new to the best of our knowledge. The same occurs in the case of the bilinear Hilbert transform for which we have the following extension \cite[Corollary 2.27]{LMO} which contain new estimates at the end-point cases:

\begin{Corollary}\label{corol:BH-1st:BMO}
	Assume that $\vec{r}=(r_1,r_2,r_3)$, $1<r_1,r_2,r_3<\infty$, verifies \eqref{cond-adm:corol}.
	Let $\vec{p}=(p_1,p_2)$, $\vec{s}=(s_1,s_2)$  with $1<p_1, p_2, s_1,s_2\le\infty$ be so that  $\frac1p:=\frac1{p_1}+\frac1{p_2}>0$,  $\frac1s:=\frac1{s_1}+\frac1{s_2}>0$. If $\vec{r}\prec\vec{p}$, for all weights $\vec{w}= (w_1, w_2)\in A_{\vec{p},\vec{r}}$,  for all $\textbf{b} = (b_1, b_2) \in  {\rm BMO}^2$, and for each multi-index $\alpha=(\alpha_1,\alpha_2)$ it follows that
	\[
	\|[BH, \textbf{b}]_\alpha (f,g) w_1w_2\|_{L^p}
	\lesssim
	\|b_1\|^{\alpha_1}_{{\rm BMO}} \|b_2\|^{\alpha_2}_{{\rm BMO}} \|f w_1\|_{L^{p_1}}\|g w_2\|_{L^{p_2}},
	\]
	and, if one further assumes that $\vec{r}\prec\vec{s}$,
	\[
	\Big\|\big\{ [BH, \textbf{b}]_\alpha (f_j, g_j) w_1 w_2\big\}_j\Big\|_{L^{p}_{\ell^{s}}}
	\lesssim
	\|b_1\|^{\alpha_1}_{{\rm BMO}} \|b_2\|^{\alpha_2}_{{\rm BMO}}
	\Big\|\big\{f_j w_1\big\}_j\Big\|_{L^{p_1}_{\ell^{s_1}}} \Big\|\big\{g_j w_2\big\}_j\Big\|_{L^{p_2}_{\ell^{s_2}}}.
	\]
\end{Corollary}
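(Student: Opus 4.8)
The plan is to obtain Corollary \ref{corol:BH-1st:BMO} as an immediate instance of Corollary \ref{corol:multilinear:comm} with $m=2$ and $T=BH$, feeding in as its hypothesis \eqref{vector-weight} the weighted bounds for the bilinear Hilbert transform recorded earlier in this section.

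First I would fix the data. Let $\vec{r}=(r_1,r_2,r_3)$, $1<r_1,r_2,r_3<\infty$, satisfy \eqref{cond-adm:corol}, and let $\vec{p}=(p_1,p_2)$ with $1<p_1,p_2\le\infty$, $\frac1p=\frac1{p_1}+\frac1{p_2}>0$ and $\vec{r}\prec\vec{p}$ be as in the statement (if there is no such $\vec{p}$ there is nothing to prove). Since $BH$ is controlled by the sparse form \eqref{sparse-domination} attached to this $\vec{r}$ --- which is where \eqref{cond-adm:corol} is used, via \cite[Theorem 2]{CPO2016} --- estimate \eqref{extrapol:C:CZO}, i.e. \cite[Corollary 2.15]{LMO}, applies and gives, as recorded above,
\[
\|BH(f,g)\,w_1 w_2\|_{L^{p}} \lesssim \|f w_1\|_{L^{p_1}}\,\|g w_2\|_{L^{p_2}}, \qquad \vec{w}=(w_1,w_2)\in A_{\vec{p},\vec{r}}.
\]
As $\vec{r}\prec\vec{p}$ forces $\vec{r}\preceq_\star\vec{p}$, this is precisely hypothesis \eqref{vector-weight} of Corollary \ref{corol:multilinear:comm} with $m=2$ and base exponent $\vec{p}$.

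I would then invoke Corollary \ref{corol:multilinear:comm}. Specializing its output exponents to $\vec{q}=\vec{p}$ --- admissible since $\vec{r}\prec\vec{p}$ --- conclusion \eqref{multi-commutator-II} yields, for every $\textbf{b}=(b_1,b_2)\in{\rm BMO}^2$, each multi-index $\alpha=(\alpha_1,\alpha_2)$, and all $\vec{w}=(w_1,w_2)\in A_{\vec{p},\vec{r}}$, the first displayed estimate of Corollary \ref{corol:BH-1st:BMO}; and, under the additional hypothesis that $\vec{s}=(s_1,s_2)$ satisfies $1<s_1,s_2\le\infty$, $\vec{r}\prec\vec{s}$ and $\frac1s=\frac1{s_1}+\frac1{s_2}>0$, conclusion \eqref{multi-comm:VV} (with its output exponents again taken to be $\vec{p}$ and $\vec{s}$) yields the second, vector-valued, estimate. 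This is exactly the assertion of Corollary \ref{corol:BH-1st:BMO}.

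There is no genuine obstacle; the argument amounts to matching notation, the one thing to check being that the weighted inequality for $BH$ displayed above is literally in the form \eqref{vector-weight} and that $\vec{r}\prec\vec{p}$ implies $\vec{r}\preceq_\star\vec{p}$. All the substantive work --- upgrading a single family of weighted estimates to the whole range of commutator estimates, including those where some $p_i$ or $s_i$ equals $\infty$ and including the vector-valued versions --- has already been done inside Corollary \ref{corol:multilinear:comm}, which in turn rests on \cite[Theorem 2.22]{LMO} and two applications of Theorem \ref{theor:extrapol-general}. The routine remaining details are left to the reader.
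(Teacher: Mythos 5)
Your proof is correct and is essentially the argument the paper intends: the paper derives Corollary \ref{corol:BH-1st:BMO} precisely by feeding the weighted estimates for $BH$ (coming from the sparse form domination of \cite[Theorem 2]{CPO2016} together with \cite[Corollary 2.15]{LMO} and Theorem \ref{theor:extrapol-general}) into Corollary \ref{corol:multilinear:comm} with $m=2$. Your verification that $\vec{r}\prec\vec{p}$ implies $\vec{r}\preceq_\star\vec{p}$ and that the recorded $BH$ bound is literally hypothesis \eqref{vector-weight} matches the intended reasoning.
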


\section{Mixed-norm estimates and tensor products of bilinear singular integrals}\label{section:tensor}
In \cite{LMV1}  the first, third and last authors of the present paper showed that if $T$ is a bilinear bi-parameter Calder\'on-Zygmund operator that is free of full paraproducts (see \cite{LMV1} for the definitions), then
\begin{equation}\label{eq:LMVRESULT}
\| T(f_1,f_2) w\|_{L^{p}} \lesssim \| f_1w_1 \|_{L^{p_1}}\| f_2 w_2\|_{L^{p_2}}
\end{equation}
holds for all $p_1,p_2 \in (1, \infty)$ and weights $w_1$ and $w_2$ such that $w_1^{p_1} \in A_{p_1}(\R^n \times \R^m)$ and
$w_2^{p_2} \in A_{p_2}(\R^n \times \R^m)$.  Here $p$ is defined by $\frac1p=\frac1{p_1}+\frac1{p_2}$, $w=w_1w_2$ and $A_p(\R^n \times \R^m)$
is the class of bi-parameter $A_p$ weights (obtained by replacing cubes by rectangles in the usual definition).
In addition, the same estimate were obtained for all tensor products
$T=T_n \otimes T_m$, where $T_n$ and $T_m$ are bilinear one-parameter Calder\'on-Zygmund operators in $\R^n$ and $\R^m$, respectively (we recall the definition below). Tensor products $T_n \otimes T_m$ are examples
of bilinear bi-parameter Calder\'on-Zygmund operators, which are not necessarily free of full paraproducts (if \emph{both} of them are \emph{not} free of paraproducts).
Recall that initially $T_n \otimes T_m$ is defined via
$$
(T_n \otimes T_m)(f_1 \otimes f_2, g_1 \otimes g_2)(x) := T_n(f_1, g_1)(x_1)T_m(f_2, g_2)(x_2),
$$
where $f_1, g_1 \colon \R^n \to \C$, $f_2, g_2 \colon \R^m \to \C$ and $x = (x_1, x_2) \in \R^{n+m}$.

The main goal of this section is to prove Theorem \ref{thm:WeightedTensor} where we establish weighted estimates for the previous tensor products of bilinear Calder\'on-Zygmund operators with the new class of weights introduced in Definition \ref{def:LargerClass}. In turn, we can extrapolate using Theorem \ref{theor:extrapol-general} to prove Corollary \ref{cor:TensorEndpoint}. The latter result contains some mixed-norm end-point estimates that were not proved in
\cite{LMV1}. The bottom line is that with our current extrapolation result we can consider the cases in which the outer exponents are infinity even when the inner exponents are in the quasi-Banach range.

\begin{Definition}\label{def:LargerClass}
Given $\vec{p}=(p_1,p_2)$ with $1<p_1,p_2<\infty$ let $\frac1p=\frac1{p_1}+\frac1{p_2}$. Let $\vec{w}=(w_1,w_2)$ be such that
$0<w_1(x_1,\cdot),w_2(x_1,\cdot)<\infty$ a.e. in $\R^m$ for a.e.~$x_1\in\R^n$ and $0<w_1(\cdot,x_2),w_2(\cdot,x_2)<\infty$ a.e. in $\R^n$ for a.e.~$x_2\in\R^m$. We say that $\vec{w}\in \mathbb{A}_{\vec{p}, n, m}$ if $w_1(x_1, \cdot)^{p_1}\in A_{p_1}(\R^m)$ and $w_2(x_1, \cdot)^{p_2}\in A_{p_2}(\R^m)$ for a.e. $x_1\in\R^n$; $(w_1(\cdot,x_2),w_2(\cdot,x_2))\in A_{\vec{p}}(\R^n)$ for a.e. $x_2\in\re^m$; and moreover
\begin{equation*}
\esssup_{x_2 \in \R^m} [(w_1(\cdot,x_2),w_2(\cdot,x_2)]_{A_{\vec{p}}(\R^n)}<\infty
\end{equation*}
and
\begin{equation*}
\esssup_{x_1 \in \R^n}[w_1(x_1, \cdot)^{p_1}]_{A_{p_1}(\R^m)}
+\esssup_{x_1 \in \R^n}[w_2(x_1, \cdot)^{p_2}]_{A_{p_2}(\R^m)}<\infty.
\end{equation*}
Here $A_{\vec{p}}(\R^n)$ is the class of bilinear $A_{\vec{p}}$ weights in $\R^n$ as defined in Section \ref{sec:multi} and
$A_{p_i}(\R^m)$ is the class of $A_{p_i}$ weights in $\R^m$.
\end{Definition}

A function $K \colon \R^{3d} \setminus \Delta \to \C$, where $\Delta=\{(x,x,x) \in \R^{3d}:\, x \in \R^d\}$,
is called a bilinear singular integral kernel if for some $\alpha \in (0,1]$ and a constant $C>0$
it satisfies the estimates
\[
|K(x,y,z)| \le C \frac{1}{(|x-y|+|x-z|)^{2d}}
\]
for  $(x,y,z) \in \R^{3d} \setminus \Delta$,
\begin{multline*}
|K(x',y,z)-K(x,y,z)|
+
|K(y,x',z)-K(y,x,z)|
\\
+
|K(y,z,x')-K(y,z,x)|
\le C \frac{|x-x'|^\alpha}{(|x-y|+|x-z|)^{2d+\alpha}}
\end{multline*}
for $(x',y,z),(x,y,z) \in \R^{3d} \setminus \Delta$ such that
$|x'-x| \le \max (|x-y|,|x-z|)/2$. We denote the smallest possible constant
$C$ in these inequalities by $\| K \|_{\operatorname{CZ}_\alpha}$.

We say that $T$ is a bilinear singular integral in $\R^d$ if there exists a bilinear singular integral kernel $K$ such that
\begin{equation}\label{eq:KernelRep}
\langle T(f_1,f_2),f_3 \rangle
= \iiint_{\R^{3d}} K(x,y,z) f_1(y)f_2(z)f_3(x) d x d y d z
\end{equation}
for all $f_1,f_2,f_3\in L^\infty_c(\re^d)$ whenever two of the functions are disjointly supported, that is,  $\supp f_i \cap \supp f_j=\emptyset$ for some $i\neq j$.
We say that $T$ is a bilinear Calder\'on-Zygmund operator in $\R^d$ if $T$ is a bilinear singular integral in $\R^d$ and moreover $T$ is bounded
from $L^{p_1}(\R^d) \times L^{p_2}(\R^d)$ into $L^p(\R^d)$ for some $1<p_1,p_2\le \infty$ such that $\frac1p=\frac1{p_1}+\frac1{p_2}>0$.
For more information about multilinear Calder\'on-Zygmund operators see \cite{GT} where it is shown, among other things, that
$T$ is bounded from $L^{q_1}(\R^d) \times L^{q_2}(\R^d)$ into $L^q(\R^d)$ for all $q_1,q_2 \in (1, \infty]$ and
$\frac1q=\frac1{q_1}+\frac1{q_2}>0$.

So-called $T1$ theorems give conditions under which singular integrals are indeed Calderón-Zygmund operators.  For bilinear $T1$ theorems see for example \cite{GT} (where multilinear singular integrals are also treated) or \cite{LMOV}.

The following theorem contains our weighted estimates for tensor products of bilinear Calder\'on-Zygmund operators involving the class of weights appearing in Definition \ref{def:LargerClass}.

\begin{Theorem}\label{thm:WeightedTensor}
Let $T_n$ and $T_m$ be bilinear Calder\'on-Zygmund operators in $\R^n$ and $\R^m$, respectively.
Let $\vec{p}=(p_1,p_2)$ be such that $1<p_1, p_2<\infty$ and set $\frac1p=\frac1{p_1}+\frac1{p_2}$.
Then
$$
\| T_n \otimes T_m (f_1,f_2) w_1w_2\|_{L^p}
\lesssim \| f_1 w_1\|_{L^{p_1}}\| f_2w_2 \|_{L^{p_2}}
$$
holds for all weights $\vec{w}=(w_1,w_2) \in \mathbb{A}_{\vec{p}, n, m}$  and for all $f_1w_1\in L^{p_1}(\R^{n+m})$ and $f_2w_2\in L^{p_2}(\R^{n+m})$.
\end{Theorem}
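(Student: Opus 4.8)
The plan is to exploit the tensor structure of $T_n\otimes T_m$ so as to split the estimate into the two ``one-parameter'' weighted theories that are already available: the genuinely bilinear weighted theory for $T_n$ on $\R^n$ with weights in $A_{\vec p}(\R^n)$ (this is \cite{LOPTT}, and also a particular case of Theorem~\ref{theor:extrapol-general}), and the weighted theory for $T_m$ on $\R^m$ with product weights $v^{p_i}\in A_{p_i}(\R^m)$ (such weights form a subclass of $A_{\vec p}(\R^m)$, so boundedness follows from the same references). First I would reduce to a dense class of inputs, taking $f_1=\sum_k a_k\otimes g_k$ and $f_2=\sum_l b_l\otimes h_l$ finite sums of tensor products with $a_k,b_l\in L^\infty_c(\R^n)$ and $g_k,h_l\in L^\infty_c(\R^m)$. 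On such inputs bilinearity gives $T_n\otimes T_m(f_1,f_2)=\sum_{k,l}T_n(a_k,b_l)\otimes T_m(g_k,h_l)$, so all the Fubini-type manipulations below are legitimate; the general case follows by a limiting argument once the quantitative bound is in hand (using $\sigma$-finiteness and outer regularity of the weighted measures, together with the uniform $A$-constants built into the definition of $\mathbb{A}_{\vec p,n,m}$, to see that such sums are dense in the relevant weighted mixed-norm spaces).

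The heart of the matter is to view $u:=T_n\otimes T_m(f_1,f_2)$ as $T_n$ acting in the $x_1$-variable on the pair $\big(f_1(\cdot,\star),f_2(\cdot,\star)\big)$ of functions valued in a function space over $\R^m$, where the bilinear pairing in the target is \emph{not} pointwise multiplication but the operator $T_m$ itself; that is, $u$ is an operator-valued bilinear Calder\'on-Zygmund operator in $\R^n$ with scalar kernel $K_n$. To bound it on the weighted mixed-norm space I would glue two layers by Fubini. In the base layer, for a.e.\ frozen $x_2\in\R^m$ the weight $(w_1(\cdot,x_2),w_2(\cdot,x_2))$ lies in $A_{\vec p}(\R^n)$ with uniformly bounded constant, and the weighted bilinear estimate for $T_n$ controls the $x_1$-slices of $u$ in terms of the corresponding slices of $f_1,f_2$. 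In the inner layer, for a.e.\ frozen $x_1\in\R^n$ one has $w_i(x_1,\cdot)^{p_i}\in A_{p_i}(\R^m)$ with uniformly bounded constant, so $T_m$ is bounded $L^{p_1}(w_1(x_1,\cdot)^{p_1})\times L^{p_2}(w_2(x_1,\cdot)^{p_2})\to L^p((w_1w_2)(x_1,\cdot)^p)$ uniformly in $x_1$; since this holds for \emph{all} such weights, extrapolation (or the known vector-valued bilinear Calder\'on-Zygmund theory) upgrades it to the $\ell^r$-valued, and more generally Banach-lattice-valued, extensions of $T_m$ that the operator-valued analysis in $\R^n$ requires. Assembling the two layers, combining the exponents via $\frac1p=\frac1{p_1}+\frac1{p_2}$ by H\"older in the mixed norm, and summing in $k,l$ gives the asserted inequality.

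The step I expect to be the main obstacle is precisely this operator-valued gluing. The function space over $\R^m$ in which $f_i(x_1,\cdot)$ takes its values carries a weight $w_i(x_1,\cdot)$ that depends on the base point $x_1$, so one cannot literally invoke a vector-valued bilinear $T1$-type theorem with a single fixed Banach space. The way around it is to carry the uniform $A_{p_i}(\R^m)$ and $A_{\vec p}(\R^n)$ constants through the whole argument and to run the operator-valued Calder\'on-Zygmund estimates with the $x_1$- (resp.\ $x_2$-) dependence kept explicit throughout---essentially reproducing, in this variably-weighted situation, the operator-valued analysis of \cite{LMV1}. Once that bookkeeping is in place, the remaining ingredients (Fubini, H\"older, and the density/limiting argument) are routine, and the point of the refined weight class $\mathbb{A}_{\vec p,n,m}$ is exactly that it decouples into the genuinely bilinear $A_{\vec p}(\R^n)$ condition in one parameter and the product $A_{p_i}(\R^m)$ conditions in the other, which is what makes the two-layer scheme close.
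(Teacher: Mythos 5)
Your proposal correctly identifies the main obstacle but does not resolve it, so there is a genuine gap. You frame $T_n\otimes T_m$ as an operator-valued bilinear Calder\'on--Zygmund operator in $\R^n$ whose target space over $\R^m$ carries an $x_1$-dependent weight, and then say the way around the lack of a fixed Banach space is to ``carry the uniform $A$-constants through'' and ``essentially reproduce'' the operator-valued analysis of \cite{LMV1}. That is precisely the step that needs an actual argument: the operator-valued bilinear CZ machinery requires a fixed Banach (or Banach-lattice) target with UMD-type structure, and the paper's point is that this does not apply off the shelf when the weight on the fiber varies with the base point. Indeed, \cite{LMV1} works with product-type weights $A_{p_1}\times A_{p_2}$ and does not give the genuinely multilinear class $\mathbb{A}_{\vec p,n,m}$; simply invoking it cannot close the argument. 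Your reduction also glosses over the quasi-Banach range $p<1$, which is allowed here, where one cannot pull the bilinear sum (or any averaging) out of the $L^p$ norm and where ``summing in $k,l$'' after expanding $f_1,f_2$ into tensor products does not reassemble the norm.

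The paper's proof takes a fundamentally different route: it uses the dyadic representation theorem of \cite{LMOV} to write $T_n\otimes T_m$ as an expectation over random grids of tensor products of bilinear dyadic shifts and paraproducts, with exponentially decaying coefficients in the complexity. The weighted estimate \eqref{eq:TensModWeighted} for each such model operator is then proved by hand, using (i) a lower square-function estimate under $A_\infty$ weights (Lemma \ref{lem:LowerSF}) to put the $\omega$-expectation \emph{inside} the norm, which is exactly how the $p<1$ case is made tractable; (ii) a weighted bilinear Marcinkiewicz--Zygmund inequality (Lemma \ref{lem:MZ}) to decouple the $\R^m$ Haar sum from the $\R^n$ model operator, using that $(w_1(\cdot,x_2),w_2(\cdot,x_2))\in A_{\vec p}(\R^n)$ uniformly in $x_2$; and (iii) weighted square-function and paraproduct bounds in $\R^m$ (Lemmas \ref{lem:block-SF}, \ref{lemma:paraprod}) for the remaining pieces, using that $w_i(x_1,\cdot)^{p_i}\in A_{p_i}(\R^m)$ uniformly in $x_1$. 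This dyadic decomposition is what replaces the operator-valued gluing you would need, and it is not a routine bookkeeping exercise: the Marcinkiewicz--Zygmund step and the lower square-function bound are the new ingredients that make the argument go through for the class $\mathbb{A}_{\vec p,n,m}$.
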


Given $0<p,q\le \infty $,  the space $L^{p}(\R^n; L^{q}(\R^m))$ consists of measurable functions $f \colon \R^{n+m} \to \C$
such that
\[
\|f\|_{L^{p}(\R^n; L^{q}(\R^m))}
:=
\big\| \|f(x_1, x_2)\|_{L_{x_2}^q(\R^m)} \big\|_{L_{x_1}^p(\R^n)} < \infty.
\]

Combining Theorem \ref{thm:WeightedTensor} with our main extrapolation result, Theorem \ref{theor:extrapol-general}, we will easily get the following consequence:

\begin{Corollary}\label{cor:TensorEndpoint}
	Let $T_n$ and $T_m$ be bilinear Calder\'on-Zygmund operators in $\R^n$ and $\R^m$, respectively.
	Let $1<p_1,p_2\le \infty$ and $1<q_1,q_2<\infty$ be such that
	$\frac1p=\frac1{p_1}+\frac1{p_2}>0$.
	Then
	\begin{equation}\label{qfwafaerfer}
	\| T_n \otimes T_m (f_1,f_2) \|_{L^{p}(\R^n; L^{q}(\R^m))}
	\lesssim \|f_1 \|_{L^{p_1}(\R^n; L^{q_1}(\R^m))}
	\| f_2 \|_{L^{p_2}(\R^n; L^{q_2}(\R^m))}.
	\end{equation}
	 for all $f_1, f_2\in {L_c^{\infty}(\R^{n+m})}$.
\end{Corollary}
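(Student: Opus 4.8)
The plan is to deduce Corollary~\ref{cor:TensorEndpoint} from the weighted estimate in Theorem~\ref{thm:WeightedTensor} together with the bilinear extrapolation result Theorem~\ref{theor:extrapol-general}, the point being to run the extrapolation \emph{in the outer variable} $\R^n$, with the inner Lebesgue norms over $\R^m$ frozen and playing the role of the functions to which extrapolation is applied. Throughout set $\frac1q:=\frac1{q_1}+\frac1{q_2}$, which is $>0$ since $q_1,q_2<\infty$, so $q<\infty$. Fix $f_1,f_2\in L^\infty_c(\R^{n+m})$ and write $F:=T_n\otimes T_m(f_1,f_2)$; applying Theorem~\ref{thm:WeightedTensor} with $\vec p=(q_1,q_2)$ and the trivial weight $\vec w=(1,1)\in\mathbb A_{(q_1,q_2),n,m}$ shows $F\in L^q(\R^{n+m})$, so that
\[
G(x_1):=\|F(x_1,\cdot)\|_{L^q(\R^m)},\qquad G_i(x_1):=\|f_i(x_1,\cdot)\|_{L^{q_i}(\R^m)}\quad(i=1,2)
\]
are well defined, non-negative and measurable on $\R^n$ (finite a.e.\ by Fubini). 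Let $\mathcal F$ be the collection of the triples $(G,G_1,G_2)$ obtained as $f_1,f_2$ range over $L^\infty_c(\R^{n+m})$.

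The key step is to verify the hypothesis~\eqref{extrapol:H} of Theorem~\ref{theor:extrapol-general} (applied on $\R^n$ with $m=2$ and $\vec r=(1,1,1)$, so that $A_{\vec p,\vec r}=A_{\vec p}$) for the family $\mathcal F$ at the starting exponents $(q_1,q_2)$; note $(1,1,1)\preceq_\star(q_1,q_2)$ since $1\le q_i$ and $r_3'=\infty\ge q$. Given $\vec u=(u_1,u_2)\in A_{(q_1,q_2)}(\R^n)$, define the lifted weights $\tilde u_i(x_1,x_2):=u_i(x_1)$ on $\R^{n+m}$. Since a constant weight lies in $A_{q_i}(\R^m)$ with constant $1$, and $(\tilde u_1(\cdot,x_2),\tilde u_2(\cdot,x_2))=(u_1,u_2)\in A_{(q_1,q_2)}(\R^n)$ for every $x_2$, one has $(\tilde u_1,\tilde u_2)\in\mathbb A_{(q_1,q_2),n,m}$ with $[(\tilde u_1,\tilde u_2)]_{\mathbb A_{(q_1,q_2),n,m}}\approx[\vec u]_{A_{(q_1,q_2)}(\R^n)}$. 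Now Tonelli's theorem gives, with $u:=u_1u_2$,
\[
\|Gu\|_{L^q(\R^n)}=\|F\,\tilde u_1\tilde u_2\|_{L^q(\R^{n+m})},\qquad
\|G_iu_i\|_{L^{q_i}(\R^n)}=\|f_i\,\tilde u_i\|_{L^{q_i}(\R^{n+m})},
\]
so Theorem~\ref{thm:WeightedTensor}, applied with $\vec p=(q_1,q_2)$ and the weights $(\tilde u_1,\tilde u_2)$, yields precisely $\|Gu\|_{L^q(\R^n)}\lesssim\|G_1u_1\|_{L^{q_1}(\R^n)}\|G_2u_2\|_{L^{q_2}(\R^n)}$, with an implicit constant depending in a non-decreasing way on $[\vec u]_{A_{(q_1,q_2)}(\R^n)}$. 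This is exactly~\eqref{extrapol:H}.

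With the hypothesis in hand, Theorem~\ref{theor:extrapol-general} applies to $\mathcal F$ and gives, for all exponents $(p_1,p_2)$ with $1<p_1,p_2\le\infty$ and $(1,1,1)\prec(p_1,p_2)$ — i.e.\ $1<p_i$ and $p<\infty$, both of which hold under the hypotheses of Corollary~\ref{cor:TensorEndpoint} since $\frac1p>0$ — and for all weights $\vec v=(v_1,v_2)\in A_{(p_1,p_2)}(\R^n)$, the estimate $\|Gv\|_{L^p(\R^n)}\lesssim\prod_{i=1}^2\|G_iv_i\|_{L^{p_i}(\R^n)}$ with $v=v_1v_2$. Taking the trivial weight $\vec v=(1,1)\in A_{(p_1,p_2)}(\R^n)$ and unravelling the definitions of $G$ and $G_i$ produces exactly~\eqref{qfwafaerfer}, the right-hand side being finite because $f_i\in L^\infty_c$. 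The only non-routine point is the setup: choosing to extrapolate in the $\R^n$-variable with the inner $L^{q_i}(\R^m)$-norms frozen, and taking the outer starting exponents to coincide with the inner exponents $(q_1,q_2)$, is precisely what makes the Tonelli identity above collapse the weighted outer estimate into the full weighted estimate of Theorem~\ref{thm:WeightedTensor}; everything else is bookkeeping about the weight classes, namely checking $(1,1,1)\preceq_\star(q_1,q_2)$, $(1,1,1)\prec(p_1,p_2)$, and $(\tilde u_1,\tilde u_2)\in\mathbb A_{(q_1,q_2),n,m}$, and no further analysis of the operator $T_n\otimes T_m$ is needed.
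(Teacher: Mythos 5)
Your argument is essentially identical to the paper's: you form the family of triples $(G,G_1,G_2)$ obtained by freezing the inner $L^{q_i}(\R^m)$-norms, lift a bilinear weight on $\R^n$ by constant extension to a weight in $\mathbb{A}_{(q_1,q_2),n,m}$, invoke Theorem~\ref{thm:WeightedTensor} at the starting exponents $(q_1,q_2)$ to verify hypothesis~\eqref{extrapol:H}, and then extrapolate via Theorem~\ref{theor:extrapol-general} with $\vec r=(1,1,1)$ before specializing to the trivial weight. This is exactly the proof given in the paper (your $G,G_i$ correspond to the paper's $F,F_i$), with only slightly more explicit bookkeeping of the weight-class checks and the Tonelli identity.
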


Our last goal is to extend the previous result in the following ways. First, we would like to consider the end-points  $q_1=\infty$ or $q_2=\infty$. Also, we wish  to show that \eqref{qfwafaerfer} holds for all  $f_1\in {L^{p_1}(\R^n; L^{q_1}(\R^m))}$ and $f_2\in {L^{p_2}(\R^n; L^{q_2}(\R^m))}$. This is straightforward if $p_1,p_2,q_1,q_2<\infty$ by a standard density argument. However, such approach fails when some of the exponents is infinity and in that scenario one even needs to make sense of $T_n \otimes T_m (f_1,f_2)$. All this is done in the main result of this section:

\begin{Theorem}\label{theor:TensorEndpoint}
Let $T_n$ and $T_m$ be bilinear Calder\'on-Zygmund operators in $\R^n$ and $\R^m$, respectively.
Let $1<p_1,p_2, q_1,q_2\le \infty$ be such that
$\frac1p=\frac1{p_1}+\frac1{p_2}>0$ and $\frac1q=\frac1{q_1}+\frac1{q_2}>0$.
Then we have
\begin{equation}\label{qfwafaerfer:theor}
\| T_n \otimes T_m (f_1,f_2) \|_{L^{p}(\R^n; L^{q}(\R^m))}
\lesssim \|f_1 \|_{L^{p_1}(\R^n; L^{q_1}(\R^m))}
\| f_2 \|_{L^{p_2}(\R^n; L^{q_2}(\R^m))}
\end{equation}
for all $f_1\in {L^{p_1}(\R^n; L^{q_1}(\R^m))}$ and $f_2\in {L^{p_2}(\R^n; L^{q_2}(\R^m))}$.
\end{Theorem}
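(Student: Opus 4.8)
The plan is to reduce the general (possibly infinite-exponent) mixed-norm estimate to the already-established finite-exponent Corollary \ref{cor:TensorEndpoint} together with the weighted bound Theorem \ref{thm:WeightedTensor}, using our extrapolation machinery and a truncation/density argument to make sense of $T_n\otimes T_m(f_1,f_2)$ when some exponents are infinite. First I would observe that the bilinear kernel representation \eqref{eq:KernelRep} for each factor, combined with the tensor definition, lets one define $(T_n\otimes T_m)(f_1,f_2)$ unambiguously as a locally integrable function on $\R^{n+m}$ for $f_1,f_2$ bounded with compact support; this is the class on which Corollary \ref{cor:TensorEndpoint} already gives \eqref{qfwafaerfer} for $1<q_1,q_2<\infty$. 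The task is therefore twofold: (a) allow $q_1$ or $q_2$ to be $\infty$, and (b) pass from $L^\infty_c$ data to general data in the mixed-norm spaces.

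For step (a) I would set up an extrapolation in the inner ($\R^m$) variable, freezing the outer structure. The key point is that Theorem \ref{thm:WeightedTensor} yields, via Definition \ref{def:LargerClass} and our main Theorem \ref{theor:extrapol-general} (applied in the form that also produces vector-valued and end-point estimates), the bound \eqref{qfwafaerfer} not only for $1<q_1,q_2<\infty$ but, by extrapolating the inner exponents from the bilinear class $A_{(q_1,q_2)}(\R^m)$, also when one of $q_1,q_2$ equals $\infty$ (as long as $\tfrac1q>0$); the weighted estimate in $\mathbb{A}_{\vec p,n,m}$ is precisely engineered so that this inner extrapolation and the outer extrapolation decouple. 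Concretely, I would apply Theorem \ref{theor:extrapol-general} to the family $\mathcal F$ of tuples built from $\big(|(T_n\otimes T_m)(f_1,f_2)|,\,|f_1|,\,|f_2|\big)$ with $f_1,f_2\in L^\infty_c(\R^{n+m})$, deducing \eqref{qfwafaerfer} for the desired range of $q_1,q_2\in(1,\infty]$, still for $L^\infty_c$ data.

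For step (b), the density/limiting argument, I would handle the finite-exponent sub-case by the standard density of $L^\infty_c$ in $L^{p_i}(\R^n;L^{q_i}(\R^m))$ and continuity of the sublinear estimate. When some exponent is infinite, density fails, so instead I would truncate: write $f_i=\lim_N f_i\mathbf 1_{B_N}$ with $f_i\mathbf 1_{B_N}\in L^{p_i}\cap L^\infty$ (still not compactly supported if the exponent is infinite, but now genuinely integrable against the kernel on bounded sets), apply the already-proven estimate for truncated data, and pass to the limit using the kernel decay in \eqref{eq:KernelRep} to control the tails and to show the truncated operators converge pointwise a.e.\ to a well-defined limit that we then \emph{define} to be $(T_n\otimes T_m)(f_1,f_2)$; the mixed-norm bound for this limit follows from Fatou's lemma in the iterated norm. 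I expect the main obstacle to be exactly this last point: giving a clean, consistent definition of $T_n\otimes T_m(f_1,f_2)$ for data that are merely in a mixed-norm space with an infinite exponent (so neither globally integrable nor compactly supported), and verifying that the truncation limit exists, is independent of the truncation, and agrees with the a priori definition on nicer data — this requires a careful use of the size and smoothness estimates of the two kernels and of Fubini in the product structure, rather than any new harmonic-analytic input.
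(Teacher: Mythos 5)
There is a genuine gap in your plan for the case $q_1=\infty$ or $q_2=\infty$. The extrapolation you invoke (Theorem \ref{theor:extrapol-general}, applied via the reduction to $F(x_1)=\|T_n\otimes T_m(f_1,f_2)(x_1,\cdot)\|_{L^q(\R^m)}$ and $F_i(x_1)=\|f_i(x_1,\cdot)\|_{L^{q_i}(\R^m)}$ as in the proof of Corollary \ref{cor:TensorEndpoint}) acts only on the \emph{outer} exponents in $\R^n$: it converts the pair $(q_1,q_2)$ of outer exponents in \eqref{22qffd} into an arbitrary $(p_1,p_2)$, while the inner exponents are frozen as whatever was fed into Theorem \ref{thm:WeightedTensor}. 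Since Definition \ref{def:LargerClass} and Theorem \ref{thm:WeightedTensor} are only available for $1<q_1,q_2<\infty$ (the inner weights $w_i(x_1,\cdot)^{q_i}$ must lie in $A_{q_i}(\R^m)$, which makes no sense for $q_i=\infty$), your step~(a) has no starting estimate to extrapolate from when an inner exponent is infinite. Your phrase ``extrapolating the inner exponents from the bilinear class $A_{(q_1,q_2)}(\R^m)$'' does not correspond to anything the machinery provides; there is no inner-variable extrapolation in this setup, and applying Theorem \ref{theor:extrapol-general} directly to the tuple $(|T_n\otimes T_m(f_1,f_2)|,|f_1|,|f_2|)$ on $\R^{n+m}$ only yields weighted $L^p(\R^{n+m})$ estimates, not mixed-norm estimates.

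The paper closes this gap by an entirely different mechanism: a \emph{vector-valued sparse domination} in the $x_1$ variable, $|\langle T_n\otimes T_m(f_1,f_2),f_3\rangle|\lesssim \Lambda_{\mathcal S}(\|f_1\|_{L^{q_1}(\R^m)},\|f_2\|_{L^{q_2}(\R^m)},\|f_3\|_{L^{q'}(\R^m)})$, obtained from the dyadic representation of the model operators $U^k_{\omega_1}\otimes U^v_{\omega_2}$. Testing this against $A_\infty$ weights on $\R^n$, dominating by the bilinear maximal function, and then using $A_\infty$-extrapolation (\cite[Theorem 2.1]{CUMP}) yields the case $q_1=\infty$ (or $q_2=\infty$) with $p_1,p_2<\infty$. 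The remaining cases with $p_i=\infty$ are handled not by the Fatou/truncation scheme you sketch, but by duality via the adjoints $(T_n\otimes T_m)^{1*}=T_n^{1*}\otimes T_m^{1*}$ and $(T_n\otimes T_m)^{2*}$ (your step (b) would need considerably more care here, since density of $L^\infty_c$ fails when an outer exponent is infinite, and a naive ball truncation does not converge in $L^{p_1}(\R^n;L^\infty(\R^m))$; the paper instead truncates by level sets of $x_1\mapsto\|f_1(x_1,\cdot)\|_{L^\infty(\R^m)}$). In short, your proposal correctly identifies the need to make sense of the operator on general data, but it is missing the sparse-domination ingredient that actually produces the end-point $q_i=\infty$, and it misidentifies extrapolation as being able to move the inner exponent to $\infty$.
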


To prove the previous results we need some preliminaries. Let $\Omega_d:= (\{0,1\}^d)^\Z$ be the set of $\{0,1\}^d$-valued sequences $(\omega_i)_{i \in \Z}$ equipped with the probability measure  such that the coordinates are independent and uniformly distributed over $\{0,1\}^d$.   We write $\mathbb{E}_\omega$ to denote the associated conditional expectation.

Let
$$
\calD_0^d:=\{2^{-k}([0,1)^d+m):\ k \in \Z,\ m \in \Z^d\}
$$
be the standard dyadic lattice of cubes in $\R^d$. For $\omega \in \Omega_d$ define the dyadic lattice
$$
\calD_\omega^d:= \{ I+\omega \colon I \in \calD_0^d\},
\qquad I+\omega := I+ \sum_{i \colon 2^{-i} < \ell(I)} 2^{-i} \omega_i.
$$
By a dyadic lattice $\calD^d$ we mean that $\calD^d=\calD^d_\omega$ for some $\omega\in \Omega_d$.

Let $\calD^d$ be a dyadic lattice and suppose $I \in \calD^d$. The sidelength of $I$  is denoted by $\ell(I)$.
If $k =0,1,2, \dots$, then $I^{(k)}$ denotes the $k$-ancestor, that is, $I^{(k)}$ is the unique cube in $\calD^d$ such that $I \subset I^{(k)}$ and
$\ell(I^{(k)})=2^k \ell(I)$.  We define $\ch(I)$ to be the collection of dyadic children of $I$, that is, those $I' \in \calD^d$ such that $(I')^{(1)}=I$.

Suppose $f \in L^1_{\loc}(\R^d)$ and $I \in \calD^d$.
The average $\langle f \rangle_I$ and the martingale difference $\Delta_I f$ are defined by
\[
\langle f \rangle_I:= \frac{1}{|I|} \int_I f dx, \qquad
 \Delta_I f:= \sum_{I' \in \ch (I)} \langle f \rangle_{I'}1_{I'}-\langle f \rangle_I1_I.
\]

Given $I \in \calD^d$ we denote by $h_I$ a cancellative $L^2$ normalized Haar function. That is, writing $I = I_1 \times \cdots \times I_d$ we can define the Haar function $h_I^{\eta}$, $\eta = (\eta_1, \ldots, \eta_d) \in \{0,1\}^d$, by setting
$
h_I^{\eta} = h_{I_1}^{\eta_1} \otimes \cdots \otimes h_{I_d}^{\eta_d},
$
where $h_{I_i}^0 = |I_i|^{-\frac12}1_{I_i}$ and $h_{I_i}^1 = |I_i|^{-\frac12}(1_{I_{i, l}} - 1_{I_{i, r}})$ for every $i = 1, \ldots, d$. Here $I_{i,l}$ and $I_{i,r}$ are the left and right
halves of the interval $I_i$ respectively. If $\eta \in \{0,1\}^d \setminus \{0\}^d$ the Haar function is cancellative, that is, $\int h_I^{\eta} dx= 0$. We usually suppress $\eta$ and simply write $h_I$ to mean that $h_I^{\eta}$, for some $\eta \in \{0,1\}^d \setminus \{0\}^d$ and we write $h_I^0$ to denote $h_I^{\eta}$ for $\eta=(0,\dots,0)$.  It is standard to see that for every $I\in\calD^d$ one has
\begin{equation}\label{Haar-vs-Martingale}
\Delta_I f=\sum_{\eta\in\{0,1\}^d\setminus\{0\}^d} \langle f, h_I^\eta\rangle h_I^\eta.
\end{equation}

Let $\calD=\calD^d$ be a dyadic lattice in $\R^d$ and let $k=(k_1,k_2,k_3)\in\N_0^3$ ---here and elsewhere $\N_0=\N\cup\{0\}$.
A bilinear dyadic shift $\mathcal{U}^k_{\calD}$ of complexity $k$ is an operator that
has three possible different forms. One of them is
\begin{equation}\label{dyadic-shift}
\mathcal{U}^k_\calD(f_1,f_2)
=\sum_{K \in \calD} \sum_{\substack{I_1, I_2, I_3 \in \calD \\ I_i^{(k_i)}=K}}
a_{K,(I_i)} \langle f_1,  h^0_{I_1}\rangle \langle f_2,  h_{I_2}\rangle h_{I_3},
\end{equation}
where every $a_{K,(I_i)}=a_{K,I_1,I_2,I_3}$ satisfies
$$
|a_{K,(I_i)}| \le \frac{|I_1|^{\frac12} |I_2|^{\frac12} |I_3|^{\frac12}}{|K|^2}.
$$
Another form of the shift is formed by replacing the Haar functions in the above formula by $h_{I_1}$, $h_{I_2}^0$
and $h_{I_3}$, and in the third form we have $h_{I_1}$, $h_{I_2}$ and $h_{I_3}^0$.

A bilinear dyadic paraproduct $\mathcal{U}_\calD$ has also three possible forms. One of them is
\begin{equation}\label{dyadic-para}
\mathcal{U}_\calD(f_1,f_2)
=\sum_{K \in \calD} a_K \langle f_1 \rangle_K\langle f_2 \rangle_K h_K,
\end{equation}
where the coefficients satisfy
$$
\sup_{K_0 \in \calD} \Big( \frac{1}{|K_0|} \sum_{\substack{K \in \calD \\ K \subset K_0}} |a_K|^2 \Big)^{\frac12}
\le 1.
$$
The second form is $ \langle f_1, h_K \rangle \langle f_2 \rangle_K \frac{1_K}{|K|}$ and third one is $ \langle f_1 \rangle_K  \langle f_2, h_K \rangle \frac{1_K}{|K|}$.

We will refer to bilinear dyadic shifts and paraproducts as bilinear dyadic   model operators.

Suppose $T$ is a bilinear Calder\'on-Zygmund operator in $\R^d$ related to a kernel $K$. We will repeatedly  use the following bilinear one-parameter representation theorem from \cite{LMOV}: if $f_1, f_2, f_3\in L^3(\R^d)$ then
\begin{equation}\label{eq:repthm}
\langle T(f_1,f_2), f_3 \rangle
=C_T \E_\omega \sum_{k\in \N_0^3}  \sum_{u=1}^{C_d} 2^{-\max_i k_i \frac{\alpha}{2}}
\langle U^k_{\calD_\omega,u} (f_1,f_2),f_3\rangle.
\end{equation}
Here $|C_T| \lesssim \| T \|_{L^3 \times L^3 \to L^{3/2}}+ \| K \|_{\operatorname{CZ}_\alpha}$ and
$\alpha$ is the parameter in the H\"older continuity assumptions of the kernel $K$ of $T$. Also, $C_d$ is a finite constant depending just on $d$.  For each $u$ and $k=(k_1,k_2,k_3)\in\N_0^3$, if $\max {k_i}>0$, then $U^k_{\calD_\omega,u}$ is a bilinear dyadic shift of complexity $k$ with respect to the lattice $\calD_{\omega}$,
that is, it has the form $\mathcal{U}^k_{\calD_\omega}$ (see \eqref{dyadic-shift}). On the other hand, if $\max k_i=0$, then $U^k_{\calD_\omega,u}$ is either a bilinear dyadic shift of complexity $(0,0,0)$ of the form $\mathcal{U}^{(0,0,0)}_{\calD_\omega}$ (see \eqref{dyadic-shift}), or a bilinear dyadic paraproduct of the form $\mathcal{U}_{\calD_\omega}$ (see \eqref{dyadic-para}).

Let $\vec{p}=(p_1,p_2)$ with $1<p_1, p_2\le \infty$ such that $1/p=1/p_1+1/p_2 > 0$.  Given $\vec{k}=(k_1,k_2,k_3)$, let $U^k_{\calD}$ be a bilinear dyadic shift of complexity $k$ if
$\max {k_i}>0$, or if $\max k_i=0$ assume that $U^k_\calD$ is either a bilinear dyadic shift of complexity $(0,0,0)$ or a bilinear  dyadic paraproduct. We claim that  for $f_1, f_2 \in L^{\infty}_c(\R^d)$ we have
\begin{equation}\label{eq:ModelWeighted}
\| U^k_\calD (f_1,f_2) w\|_{L^p}
\lesssim   (\max_i k_i+1) \| f_1w_1 \|_{L^{p_1} } \| f_2 w_2\|_{L^{p_2}},
\end{equation}
where $\vec{w}=(w_1,w_2) \in A_{\vec{p}}(\R^d)$, $w=w_1w_2$ and the implicit constant depends on the $A_{(p_1, p_2)}$ characteristic of $(w_1,w_2) $.
To obtain these estimates we first observe that in \cite[Section 5]{LMOV} it was shown that bilinear dyadic shifts and paraproducts satisfy the so-called sparse form domination. Second, in \cite[Theorem 3.2]{LMS} it was shown that sparse operators satisfy \eqref{eq:ModelWeighted}
under the further assumption  $p_1,p_2<\infty$. Finally we can apply Theorem \ref{theor:extrapol-general:CZO} to conclude as well the case on which either $p_1$ or $p_2$ are infinity.

\begin{Lemma}\label{lem:LowerSF}
Let $\Omega_0$ be a probability space and let $\{\mathcal{D}^d_\omega\}_{\omega\in\Omega_0}$ be a collection of dyadic grids in $\R^d$.
Given $0<p<\infty$ and $\nu \in A_\infty(\R^d)$, for every sequence of functions $\{g_\omega\}_{\omega\in \Omega_0}$ with $g_\omega \colon \R^d \to \C$ for every $\omega \in \Omega_0$ one has
$$
\| \E_\omega g_{\omega} \|_{L^p(\nu)}
\lesssim  \Big\| \E_\omega \Big( \sum_{J \in \calD^d_{\omega}} |\Delta_J g_\omega |^2 \Big)^{\frac12} \Big \|_{L^p(\nu)}.
$$
\end{Lemma}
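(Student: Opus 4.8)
The plan is to reduce, via Rubio de Francia extrapolation with $A_\infty$ weights, to the single case $p=2$, $\nu\in A_2(\R^d)$, which is then handled by duality. Throughout, for a dyadic grid $\calD$ in $\R^d$ write $S_\calD h:=\big(\sum_{J\in\calD}|\Delta_J h|^2\big)^{1/2}$, so that the assertion of the lemma reads $\|\E_\omega g_\omega\|_{L^p(\nu)}\lesssim\|\E_\omega S_{\calD^d_\omega}g_\omega\|_{L^p(\nu)}$, and let $S^*h:=\sup_\calD S_\calD h$, the supremum taken over all shifted dyadic grids in $\R^d$ (so in particular $S_{\calD^d_\omega}h\le S^*h$ for every $\omega\in\Omega_0$). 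A fact I will use is that $S^*$ is pointwise controlled by a classical continuous square function — for instance Wilson's intrinsic square function — and is therefore bounded on $L^q(u)$ for all $1<q<\infty$ and all $u\in A_q(\R^d)$, with norm depending only on $q$, $d$ and $[u]_{A_q}$.

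The case $p=2$, $w:=\nu\in A_2$ is proved by duality. Fix $\psi$ with $\|\psi\|_{L^2(w)}\le1$ and put $\Psi:=\psi w$, so $\|\Psi\|_{L^2(w^{-1})}\le1$ and $w^{-1}\in A_2$. Using the identity $\langle\Delta_J u_1,u_2\rangle=\langle\Delta_J u_1,\Delta_J u_2\rangle$ (valid for the martingale differences of any dyadic grid), the martingale expansion of $\Psi$ in $\calD^d_\omega$, and the pointwise Cauchy--Schwarz inequality in $J$, for each fixed $\omega$ one has
\[
\Big|\int g_\omega\,\overline{\Psi}\,dx\Big|
=\Big|\sum_{J\in\calD^d_\omega}\langle\Delta_J g_\omega,\Delta_J\Psi\rangle\Big|
\le\int\big(S_{\calD^d_\omega}g_\omega\big)\big(S_{\calD^d_\omega}\Psi\big)\,dx.
\]
Taking the average $\E_\omega$, using that $\E_\omega(AB)\le(\E_\omega A)\,\sup_\omega B$ whenever $A\ge0$, and then Hölder's inequality with the weight $w$,
\[
\Big|\int(\E_\omega g_\omega)\,\overline{\psi}\,w\,dx\Big|
\le\int\big(\E_\omega S_{\calD^d_\omega}g_\omega\big)\big(S^*\Psi\big)\,dx
\le\big\|\E_\omega S_{\calD^d_\omega}g_\omega\big\|_{L^2(w)}\,\big\|S^*\Psi\big\|_{L^2(w^{-1})}
\lesssim\big\|\E_\omega S_{\calD^d_\omega}g_\omega\big\|_{L^2(w)},
\]
because $S^*$ is bounded on $L^2(w^{-1})$ (as $w^{-1}\in A_2$) and $\|\Psi\|_{L^2(w^{-1})}\le1$. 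A supremum over admissible $\psi$ yields the lemma when $p=2$ and $\nu\in A_2$. The identical computation, with $L^2(w)$ and $L^2(w^{-1})$ replaced by $L^p(w)$ and $L^{p'}(w^{1-p'})$ and using that $w\in A_p$ implies $w^{1-p'}\in A_{p'}$, gives the estimate for every $1<p<\infty$ and every $w\in A_p(\R^d)$.

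To conclude, I apply Rubio de Francia extrapolation with $A_\infty$ weights, valid in the range $0<p<\infty$ (see \cite[Chapter~3]{CMP}), to the fixed pair $\big(\,|\E_\omega g_\omega|,\ \E_\omega S_{\calD^d_\omega}g_\omega\,\big)$: the previous paragraph supplies the required estimate at a fixed exponent for a suitable class of weights, with a bound depending only on the relevant Muckenhoupt characteristic (hence, after extrapolation, only on $[\nu]_{A_\infty}$), $p$ and $d$, and the extrapolation theorem upgrades it to the desired inequality for all $0<p<\infty$ and all $\nu\in A_\infty(\R^d)$.

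The crux is the duality step, and within it the replacement of the family $\{S_{\calD^d_\omega}\}_{\omega\in\Omega_0}$ — each member bounded on $L^2$ of an $A_2$ weight, but whose pointwise supremum is not automatically bounded — by the single universal square function $S^*$, which is genuinely bounded on such spaces by classical weighted Littlewood--Paley theory. The remaining technical points — convergence of the expansion $\Psi=\sum_J\Delta_J\Psi$ and absolute convergence of the resulting series of pairings, the measurability of $\omega\mapsto S_{\calD^d_\omega}\Psi(x)$ (reduced to a supremum over a countable collection of grids), and the reduction to the case where the right-hand side is finite — are routine and dealt with by standard approximations.
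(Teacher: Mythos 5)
Your argument diverges from the paper's in an interesting way, but the extrapolation step at the end has a genuine gap that the paper's route is specifically designed to avoid.

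Your duality computation establishes
\[
\| \E_\omega g_\omega \|_{L^p(\nu)} \lesssim \Big\| \E_\omega \Big(\sum_{J\in\calD^d_\omega}|\Delta_J g_\omega|^2\Big)^{1/2}\Big\|_{L^p(\nu)}
\qquad\text{for } 1<p<\infty \text{ and } \nu\in A_p,
\]
because the boundedness of $S^*$ on $L^{p'}(\nu^{1-p'})$ requires $\nu^{1-p'}\in A_{p'}$, i.e.\ $\nu\in A_p$. The lemma, however, asserts the estimate for \emph{every} $0<p<\infty$ and \emph{every} $\nu\in A_\infty$, which is strictly stronger: for instance with $p=3/2$ one may take $\nu\in A_\infty\setminus A_{3/2}$, and then your hypothesis gives nothing at that pair $(p,\nu)$. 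The $A_\infty$-extrapolation theorem you invoke ([CUMP, Theorem~2.1], or the Chapter~3 version in \cite{CMP}) has as hypothesis that the estimate holds at some \emph{fixed} $p_0$ for \emph{all} $\nu\in A_\infty$; it does not let you pass from ``$\nu\in A_{p_0}$ at $p_0$'' (nor from ``$\nu\in A_p$ for all $p\in(1,\infty)$'') to ``$\nu\in A_\infty$ at all $p\in(0,\infty)$''. The Hardy--Littlewood maximal operator is a standing counterexample to any such implication: it is bounded on $L^p(w)$ precisely for $w\in A_p$, not for $w\in A_\infty$. So the final sentence of your proof does not follow from what precedes it.

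The paper's argument circumvents this by starting from a lower square-function estimate that is \emph{already} known for $A_\infty$ weights at $t=2$ (Chang--Wilson--Wolff \cite{CWW}, or \cite[Theorem 3.4]{DIPTV}), not the $A_2$ version that duality produces. After an $A_\infty$-extrapolation it is applied at $t=1$, which is the crucial exponent: for nonnegative integrands one has the exact identity
\[
\E_\omega\|h_\omega\|_{L^1(\nu)} = \|\E_\omega h_\omega\|_{L^1(\nu)},
\]
so the average passes inside the norm without any loss and without ever invoking a maximal square function over grids. (For any $t\ne1$, Minkowski's inequality gives the inequality in the useless direction.) A second $A_\infty$-extrapolation then gives all $0<p<\infty$. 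Your replacement of $S_{\calD^d_\omega}\Psi$ by $S^*\Psi$ is a nice device for handling the $\omega$-dependence inside a dual pairing, but it ties the argument to the $A_p$ scale. As a secondary remark, the assertion that $S^*$ is pointwise controlled by Wilson's intrinsic square function is true but deserves a precise citation, since the Haar functions are not Hölder-continuous and the pointwise domination of martingale-difference square functions by $G_\alpha$ is not entirely trivial; in the paper's route this issue never arises because only single-grid square functions appear. If you want to keep the duality approach, you would need to either run it directly for $A_\infty$ weights (which it structurally cannot do) or first prove the single-grid $A_\infty$ estimate by other means and then insert the $t=1$ averaging step, at which point you have reconstructed the paper's proof.
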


\begin{proof}
Given $\calD^d$, a dyadic lattice in $\R^d$, one can first show that
\begin{equation}\label{awFQFAWf}
\| g \|_{L^{t}(\nu)} \lesssim \Big \| \Big( \sum_{J \in \calD^d} |\Delta_J g |^2 \Big)^{\frac12} \Big \|_{L^{t}(\nu)}
\end{equation}
for any $t \in (0, \infty)$ and  $\nu \in A_{\infty}(\R^d)$ where the implicit constant is independent of $\calD^d$. To see this we can first invoke \cite{CWW} or \cite[Theorem 3.4]{DIPTV} to obtain the case $t=2$. In turn, using $A_\infty$-extrapolation, see \cite[Theorem 2.1]{CUMP}, we conclude that the same estimate holds for all $0<t<\infty$.

We next use \eqref{awFQFAWf} with $t=1$ to see that for every $\nu \in A_\infty(\R^m)$ we have
\begin{multline*}
\| \E_\omega g_{\omega} \|_{L^1(\nu)}
\le \E_\omega \|  g_{\omega} \|_{L^1(\nu)}
\lesssim
\E_\omega \Big\| \Big( \sum_{J \in \calD^d_{\omega}} |\Delta_J g_\omega |^2 \Big)^{\frac12} \Big \|_{L^1(\nu)}
\\
=
\Big\| \E_\omega \Big( \sum_{J \in \calD^d_{\omega}} |\Delta_J g_\omega |^2 \Big)^{\frac12} \Big \|_{L^1(\nu)}.
\end{multline*}
We use again $A_\infty$ extrapolation, see \cite[Theorem 2.1]{CUMP},  to obtain the desired estimate.
\end{proof}

Given a sequence of scalars $\{a_{i,j}\}_{i,j=1}^\infty\subset\mathbb{C}$ and $p,q \in (0, \infty]$, we define
$$
\big\| \{a_{i,j}\}_{i,j} \big\|_{\ell_j^p(\ell_i^q)}:= \Big \| \big\{ \big\| \{a_{i,j}\}_i \big\|_{\ell^q} \big\}_j \Big \|_{\ell^p}.
$$
The following lemma states a variant of the bilinear Marcinkiewicz-Zygmund inequality:

\begin{Lemma}\label{lem:MZ}
Let $\Omega_0$ be a probability space. Assume that the sequence of bilinear operators $\{T^\omega\}_{\omega\in \Omega_0}$ satisfies
\begin{equation}\label{f4q3r34tfg3g}
\sup_{\omega\in\Omega_0}\| T^\omega(f_1,f_2)w_1 w_2 \|_{L^{3/2}} \lesssim  \| f_1w_1 \|_{L^3} \| f_2 w_2\|_{L^3}
\end{equation}
for all weights $\vec{w}=(w_1,w_2)\in A_{(3,3)}(\R^d)$ and $f_1, f_2\in L^\infty_c(\R^d)$.
Let $1<s\le 2$, $\vec{p}=(p_1,p_2)$, and $\vec{q}=(q_1,q_2)$ be such that $1<p_1,p_2,q_1,q_2\le \infty$, $\frac1p=\frac1{p_1}+\frac1{p_2}>0$ and $\frac1q=\frac1{q_1}+\frac1{q_2}>0$. Then,
the estimate
\begin{multline*}
\Big\| \E_{\omega}  \Big( \sum_{k=1}^\infty \Big( \sum_{i,j=1}^\infty |T^\omega(f^\omega_{1,i,k}, f^\omega_{2,j,k})|^s \Big)^{\frac{q}{s}} \Big)^{\frac{1}{q}} w_1w_2\Big\|_{L^{p}}
\\
\lesssim
\Big \|  \Big(\E_\omega \big\| \{f^\omega_{1,i,k}\}_{i,k} \big\|_{\ell_k^{q_1}(\ell_i^s)}^2 \Big)^{\frac{1}{2}} w_1\Big \|_{L^{p_1}}
\Big \|  \Big(\E_\omega \big\| \{f^\omega_{2,j,k}\}_{j,k} \big\|_{\ell_k^{q_2}(\ell_j^s)}^2 \Big)^{\frac{1}{2}} w_2\Big \|_{L^{p_2}}
\end{multline*}
holds for all weights $\vec{w}=(w_1,w_2)\in A_{\vec{p}}(\R^d)$ and for $f_{1,i,k}^\omega, f_{2,j,k}^\omega\in L^\infty_c(\R^d)$.
\end{Lemma}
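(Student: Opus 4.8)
The plan is to invoke the extrapolation theorem (Theorem~\ref{theor:extrapol-general:CZO}, used in its extrapolation--family form, i.e.\ Theorem~\ref{theor:extrapol-general} with $\vec r=(1,1,1)$) several times, arranging matters so that the only genuinely hands-on steps --- Tonelli, Minkowski's integral inequality, and Cauchy--Schwarz in $\omega$ --- are performed at exponents where they are elementary; extrapolation then carries the resulting inequalities to the full range $1<p_1,p_2,q_1,q_2\le\infty$ with $1/p,1/q>0$ and to all $\vec w\in A_{\vec p}(\R^d)$. At every stage the constant produced by extrapolation inherits the $\omega$-uniformity of the input (the input constant always being a single function of the relevant $A$-characteristic), so all intermediate estimates stay uniform in $\omega$; one may also assume the right-hand side finite and only finitely many functions nonzero, the general case following by monotone convergence.

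\emph{Step 1: a square-function-free base estimate.} First I would extrapolate the hypothesis \eqref{f4q3r34tfg3g} --- which is \eqref{extrapol:H*} for $m=2$, exponents $(3,3)$, weights $A_{(3,3)}$ and the family of triples $(T^\omega(g_1,g_2),g_1,g_2)$ --- to obtain, uniformly in $\omega$, $\|T^\omega(g_1,g_2)w\|_{L^s}\lesssim\|g_1w_1\|_{L^{2s}}\|g_2w_2\|_{L^{2s}}$ for all $\vec w=(w_1,w_2)\in A_{(2s,2s)}(\R^d)$; this is legitimate since $\tfrac1{2s}+\tfrac1{2s}=\tfrac1s>0$. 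For such weights, Tonelli and this bound give
\[
\Big\|\Big(\sum_{i,j}|T^\omega(f_{1,i},f_{2,j})|^s\Big)^{\frac1s}w\Big\|_{L^s}^s
=\sum_{i,j}\|T^\omega(f_{1,i},f_{2,j})w\|_{L^s}^s
\lesssim\Big(\sum_i\|f_{1,i}w_1\|_{L^{2s}}^s\Big)\Big(\sum_j\|f_{2,j}w_2\|_{L^{2s}}^s\Big),
\]
and, since $2s\ge s$, Minkowski's integral inequality gives $\big(\sum_i\|f_{1,i}w_1\|_{L^{2s}}^s\big)^{1/s}\le\big\|(\sum_i|f_{1,i}|^s)^{1/s}w_1\big\|_{L^{2s}}$ and similarly for the $j$-sum. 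Thus the family of triples $\big((\sum_{i,j}|T^\omega(f_{1,i},f_{2,j})|^s)^{1/s},\,(\sum_i|f_{1,i}|^s)^{1/s},\,(\sum_j|f_{2,j}|^s)^{1/s}\big)$ satisfies \eqref{extrapol:H*} at exponents $(2s,2s,s)$ with weights $A_{(2s,2s)}$, uniformly in $\omega$.

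\emph{Step 2: extrapolate and add the outer $\ell^{q}_k$-layer.} Applying extrapolation to that family, in particular the vector-valued conclusion \eqref{extrapol:vv*} with a new index $k$, outer Lebesgue exponents $(p_1,p_2)$ and outer sequence exponents $(q_1,q_2)$ (both admissible since $1/p>0$ and $1/q=1/q_1+1/q_2>0$), and using $\ell^s(\N^2)=\ell^s_j(\ell^s_i)$ to recognise the product-index inner norm, one obtains, uniformly in $\omega$,
\[
\Big\|\Big(\sum_k\Big(\sum_{i,j}|T^\omega(f_{1,i,k},f_{2,j,k})|^s\Big)^{\frac qs}\Big)^{\frac1q}w\Big\|_{L^p}
\lesssim
\Big\|\big\|\{f_{1,i,k}\}_{i,k}\big\|_{\ell^{q_1}_k(\ell^s_i)}w_1\Big\|_{L^{p_1}}
\Big\|\big\|\{f_{2,j,k}\}_{j,k}\big\|_{\ell^{q_2}_k(\ell^s_j)}w_2\Big\|_{L^{p_2}}
\]
for all $\vec w\in A_{\vec p}(\R^d)$. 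This is the asserted estimate except for the $\E_\omega$.

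\emph{Step 3: reinsert $\E_\omega$ at a convenient exponent and extrapolate once more.} The delicate point is putting back the average in $\omega$: averaging is not well behaved in $L^p$ for $p<1$, and replacing $\big(\E_\omega\|\cdot\|_{L^{p_i}}^2\big)^{1/2}$ by $\big\|(\E_\omega|\cdot|^2)^{1/2}\big\|_{L^{p_i}}$ needs $p_i\ge2$. I would therefore apply Step~2 at the harmless exponent $(p_1,p_2,p)=(4,4,2)$ (keeping the lemma's $q_1,q_2,s$), where $2\ge1$ and $4\ge2$. Writing $G_\omega$ for the left-hand function there and $F^\omega_1=\big\|\{f^\omega_{1,i,k}\}_{i,k}\big\|_{\ell^{q_1}_k(\ell^s_i)}$, $F^\omega_2=\big\|\{f^\omega_{2,j,k}\}_{j,k}\big\|_{\ell^{q_2}_k(\ell^s_j)}$, successive use of Minkowski's integral inequality ($\|\E_\omega G_\omega\,w\|_{L^2}\le\E_\omega\|G_\omega w\|_{L^2}$), Step~2 at $(4,4,2)$, Cauchy--Schwarz in $\omega$, and Minkowski's integral inequality again gives
\[
\|\E_\omega G_\omega\cdot w\|_{L^2}
\lesssim
\Big(\E_\omega\|F^\omega_1w_1\|_{L^4}^2\Big)^{\frac12}\Big(\E_\omega\|F^\omega_2w_2\|_{L^4}^2\Big)^{\frac12}
\le
\big\|(\E_\omega (F^\omega_1)^2)^{\frac12}w_1\big\|_{L^4}\,\big\|(\E_\omega (F^\omega_2)^2)^{\frac12}w_2\big\|_{L^4}
\]
for all $\vec w\in A_{(4,4)}(\R^d)$, uniformly over the admissible families. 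Hence the triples $\big(\E_\omega G_\omega,\,(\E_\omega (F^\omega_1)^2)^{1/2},\,(\E_\omega (F^\omega_2)^2)^{1/2}\big)$ satisfy \eqref{extrapol:H*} at $(4,4,2)$ with $A_{(4,4)}$ weights, and a final application of extrapolation (only the scalar conclusion \eqref{extrapol:C*} is needed) transports this to all $1<p_1,p_2\le\infty$ with $1/p>0$ and all $\vec w\in A_{\vec p}(\R^d)$ --- which is exactly the stated inequality. The main obstacle is not any single estimate but the organisation: choosing the intermediate exponents so that Tonelli and the three Minkowski swaps are all legitimate, handling the product index $(i,j)$ by the identification $\ell^s(\N^2)=\ell^s_j(\ell^s_i)$, and checking that the $\omega$-uniformity of the constant survives each extrapolation.
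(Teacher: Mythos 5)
Your proof is correct, and it takes a genuinely different route from the paper at the one substantive step, namely the passage from the scalar weighted bound to the inner $(i,j)$-vector-valued bound. The paper first extrapolates the hypothesis down to a pair of quasi-Banach exponents $(r_1,r_2)$ with $1<r_1,r_2<s$ (so that $r<1$), and then invokes the bilinear Marcinkiewicz--Zygmund inequality of Carando--Mazzitelli--Ombrosi \cite[Proposition 5.3]{CMO}, whose hypothesis $0<r<\max\{r_1,r_2\}<s\le 2$ is tailored to those exponents. You instead extrapolate to the single diagonal point $(2s,2s)\to L^s$ and observe that there the $\ell^s$-sum over $(i,j)$ can be pulled through the $L^s$ norm by Tonelli, after which Minkowski's inequality (legitimate since $2s\ge s$) recovers the $\ell^s(L^{2s})\to L^{2s}(\ell^s)$ step. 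This replaces a nontrivial external lemma by a completely elementary computation, at the cost of committing to one particular starting exponent --- which is exactly the kind of freedom extrapolation provides. From that point on the two proofs are essentially parallel: both apply the vector-valued conclusion of Theorem~\ref{theor:extrapol-general:CZO} to add the $k$-layer, both reinsert the expectation $\E_\omega$ at a single Banach-range exponent (the paper at $(2,2,1)$ using Jensen and Cauchy--Schwarz; you at $(4,4,2)$ using Minkowski, Cauchy--Schwarz, and Minkowski again --- both equally valid), and both finish with one more extrapolation. Your account of the $\omega$-uniformity of constants through each extrapolation step is also accurate and matches the point the paper emphasizes. In short, the overall architecture is the same, but your Tonelli trick at the diagonal exponent is a self-contained and somewhat cleaner substitute for the appeal to \cite{CMO}, while the paper's route makes the Marcinkiewicz--Zygmund structure of the estimate more visible.
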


\begin{proof}
Take an arbitrary $\omega \in \Omega_0$. Let $\vec{r}=(r_1,r_2)$ be such that $1<r_1,r_2<s$ and set $\frac1r=\frac1{r_1}+\frac1{r_2}$.  Note that in particular $\frac1r>\frac2s\ge 1$. On the other hand, from \eqref{f4q3r34tfg3g} Theorem \ref{theor:extrapol-general:CZO} we deduce that $T^\omega$
satisfies the estimate
$$
\| T^\omega(f_1,f_2)w_1w_2\|_{L^r}
\lesssim \| f_1 w_1\|_{L^{r_1}}\| f_2 w_2\|_{L^{r_2}}.
$$
for all $\vec{w}=(w_1,w_2)\in A_{\vec{r}}(\R^d)$, and $f_1, f_2\in L^\infty_c(\R^d)$.
With this in hand, since $0< r<\max\{r_1,r_2\}<s\le 2$ we can apply the bilinear Marcinkiewicz-Zygmund inequality in \cite[Proposition 5.3]{CMO} to obtain that
\begin{equation}\label{eq:ApplyCMO}
\Big \| \Big(\sum_{i,j=1}^\infty |T^\omega(f_{1,i},f_{2,j})|^s \Big)^{\frac{1}{s}} w_1w_2\Big \|_{L^r}
\lesssim
\Big \| \big\| \{f_{1,i}\}_i\big\|_{\ell^s} w_1\Big\|_{L^{r_1}}\Big\| \big\|\{f_{2,j}\}_j \big\|_{\ell^s} w_2\Big\|_{L^{r_2}},
\end{equation}
for all $\vec{w}=(w_1,w_2)\in A_{\vec{r}}(\R^d)$ and $f_{1,i}, f_{2,j}\in L^\infty_c(\R^d)$.

By Theorem \ref{theor:extrapol-general:CZO}, for every $\vec{q}=(q_1,q_2)$, $\vec{t}=(t_1,t_2)$ with $1<q_1,q_2,t_1,t_2\le \infty$ so that
$\frac1q=\frac1{q_1}+\frac1{q_2}>0$ and $\frac1t=\frac1{t_1}+\frac1{t_2}>0$ one can immediately obtain that
\begin{multline*}
\Big\|   \Big( \sum_{k=1}^\infty \Big( \sum_{i,j=1}^\infty |T^\omega(f_{1,i,k}, f_{2,j,k})|^s \Big)^{\frac{q}{s}} \Big)^{\frac{1}{q}} w_1 w_2\Big\|_{L^{t}} \\
 \lesssim \Big \|  \big\| \{f_{1,i,k}\}_{i,k} \big\|_{\ell_k^{q_1}(\ell_i^s)} w_1\Big \|_{L^{t_1}}
\Big \|  \big\| \{f_{2,j,k}\}_{j,k} \big\|_{\ell_k^{q_2}(\ell_j^s)} w_2 \Big \|_{L^{t_2}}.
\end{multline*}
holds for every $\vec{w}=(w_1,w_2)\in A_{\vec{t}}(\R^d)$  and $f_{1,i,k}, f_{2,j,k}\in L^\infty_c(\R^d)$. Here it is crucial to emphasize all the implicit constants are uniformly bounded on $\omega\in \Omega_0$ because of \eqref{f4q3r34tfg3g}. Applying this for the particular choice $\vec{t}=(2,2)$ so that $t=1$ one can easily see that
\begin{equation*}
\begin{split}
\Big\| \E_\omega  \Big( \sum_{k=1}^\infty & \Big( \sum_{i,j=1}^\infty |T^\omega(f_{1,i,k}, f_{2,j,k})|^s \Big)^{\frac{q}{s}} \Big)^{\frac{1}{q}} w_1w_2\Big\|_{L^{1}} \\
&
\le \E_\omega  \Big\| \Big( \sum_{k=1}^\infty\Big( \sum_{i,j=1}^\infty |T^\omega(f_{1,i,k}, f_{2,j,k})|^s \Big)^{\frac{q}{s}} \Big)^{\frac{1}{q}} w_1w_2\Big\|_{L^{1}} \\
&
\lesssim
\E_\omega \Big \|  \big\| \{f^\omega_{1,i,k}\}_{i,k} \big\|_{\ell_k^{q_1}(\ell_i^s)} w_1\Big \|_{L^{2}}
\Big \|  \big\| \{f^\omega_{2,j,k}\}_{j,k} \big\|_{\ell_k^{q_2}(\ell_j^s)} w_2 \Big \|_{L^{2}} \\
 & \le \Big \| \Big( \E_\omega \big\| \{f^\omega_{1,i,k}\}_{i,k} \big\|_{\ell_k^{q_1}(\ell_i^s)}^2 \Big)^{\frac{1}{2}} w_1\Big \|_{L^{2}}
\Big \| \Big( \E_\omega \big\| \{f_{2,j,k}\}_{j,k} \big\|_{\ell_k^{q_2}(\ell_j^s)}^2 \Big)^{\frac{1}{2}} w_2\Big \|_{L^{2}}
\end{split}
\end{equation*}
for every $\vec{w}=(w_1,w_2)\in A_{\vec{t}}(\R^d)$  and $f_{1,i,k}, f_{2,j,k}\in L^\infty_c(\R^d)$. We extrapolate from this and Theorem \ref{theor:extrapol-general:CZO} readily leads to the desired estimate.
\end{proof}

\begin{Lemma}\label{lem:block-SF}
Let $\Omega_0$ be a probability space and let $\{\mathcal{D}^d_\omega\}_{\omega\in\Omega_0}$ be a collection of dyadic grids in $\R^d$. For every $\omega\in\Omega_0$,  $I\in\mathcal{D}^d_\omega$ and $k\ge 0$ set
\[
\Delta_{I,k} f:=\sum_{\substack{J\in\mathcal{D}^d_\omega\\ J^{(k)}=I}} \Delta_J f.
\]
If $1<p<\infty$ and $\nu\in A_{p}(\R^d)$ then
\[
\sup_{k\ge 0}
\Big \| \Big( \E_{\omega} \sum_{I \in \calD^d_{\omega}}
( M (\Delta_{I,k} g ))^2  \Big)^{\frac 12} \Big \|_{L^{p}(\nu)}
\lesssim
\| g \|_{L^{p}(\nu)}.
\]
\end{Lemma}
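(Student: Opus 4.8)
The plan is to collapse the ``block'' martingale differences $\Delta_{I,k}g$ into honest martingale differences, reduce the whole statement to a single estimate on $L^2(\nu)$ with $\nu\in A_2$ --- a setting where both the Hardy--Littlewood maximal function and the dyadic square function are bounded with constants controlled by $[\nu]_{A_2}$ --- and then invoke one-variable Rubio de Francia extrapolation to obtain the full range $1<p<\infty$, $\nu\in A_p$, with a constant that does not see $k$ or $\omega$.

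The first step is the elementary disjointness collapse. Fix a dyadic lattice $\calD^d$, an integer $k\ge0$ and a cube $I\in\calD^d$; every $J\in\calD^d$ with $J^{(k)}=I$ has $\ell(J)=2^{-k}\ell(I)$, so these cubes are pairwise disjoint, and since $\Delta_J g$ is supported in $J$ we get $|\Delta_{I,k}g|^2=\sum_{J:\,J^{(k)}=I}|\Delta_J g|^2$ pointwise; because each $J$ has exactly one $k$-th ancestor, summing over $I$ gives
\begin{equation*}
\sum_{I\in\calD^d}|\Delta_{I,k}g|^2=\sum_{J\in\calD^d}|\Delta_J g|^2=\big(S_{\calD^d}g\big)^2,
\end{equation*}
where $S_{\calD^d}$ is the dyadic square function. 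Next I would fix $g\in L^\infty_c(\R^d)$ (the general case follows by a routine density/Fatou argument) and $\nu\in A_2(\R^d)$, and estimate on $L^2(\nu)$: using Tonelli's theorem (all terms being nonnegative), the boundedness of $M$ on $L^2(\nu)$, the collapse identity, and the classical fact that $S_{\calD^d}$ is bounded on $L^2(\nu)$ with a bound depending only on $d$ and $[\nu]_{A_2}$ --- hence uniform over dyadic lattices, since $S_{\calD^d}$ is a vector-valued Calder\'on--Zygmund operator with lattice-independent constants; see also \cite{CWW, DIPTV} --- we obtain
\begin{align*}
\Big\|\Big(\E_\omega\sum_{I\in\calD^d_\omega}(M\Delta_{I,k}g)^2\Big)^{\frac12}\Big\|_{L^2(\nu)}^2
&=\E_\omega\sum_{I\in\calD^d_\omega}\|M\Delta_{I,k}g\|_{L^2(\nu)}^2
\lesssim\E_\omega\sum_{I\in\calD^d_\omega}\|\Delta_{I,k}g\|_{L^2(\nu)}^2\\
&=\E_\omega\big\|S_{\calD^d_\omega}g\big\|_{L^2(\nu)}^2\lesssim\|g\|_{L^2(\nu)}^2,
\end{align*}
all implicit constants depending only on $d$ and $[\nu]_{A_2}$, and in particular independent of $k$.

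Finally I would feed this into extrapolation applied to the family
\begin{equation*}
\mathscr{F}:=\Big\{\Big(\big(\E_\omega{\textstyle\sum_{I\in\calD^d_\omega}}(M\Delta_{I,k}g)^2\big)^{\frac12},\,g\Big):\ g\in L^\infty_c(\R^d),\ k\ge0\Big\}.
\end{equation*}
The previous step says $\|F\|_{L^2(\nu)}\le\varphi([\nu]_{A_2})\|G\|_{L^2(\nu)}$ for all $(F,G)\in\mathscr{F}$ and all $\nu\in A_2(\R^d)$, with $\varphi$ non-decreasing. By the classical one-variable (diagonal) Rubio de Francia extrapolation theorem (see \cite[Chapter 3]{CMP}; equivalently, the diagonal case of Theorem \ref{thm:offdiagonal}) this self-improves to $\|F\|_{L^p(\nu)}\le\widetilde\varphi(p,[\nu]_{A_p})\|G\|_{L^p(\nu)}$ for every $1<p<\infty$, every $\nu\in A_p(\R^d)$, and every $(F,G)\in\mathscr{F}$; since $\widetilde\varphi$ is independent of $k$, taking the supremum over $k\ge0$ gives the lemma. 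I do not expect a genuinely hard step here: the only point that needs care is the bookkeeping of constants --- every maximal-function and square-function bound must be invoked in the quantitative form depending on $[\nu]_{A_2}$ alone, so that the resulting inequality is uniform in both $\omega$ and $k$; without that uniformity neither the extrapolation step nor the passage to $\sup_k$ would be legitimate. The conceptual role of extrapolation is precisely to sidestep the awkward position of the average $\E_\omega$ \emph{inside} the square root (which would make a direct $L^p$ argument for $1<p<2$ unpleasant), replacing it by a single clean $L^2(A_2)$ estimate.
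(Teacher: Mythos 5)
Your proof is correct and follows essentially the same route as the paper: collapse the disjointly supported blocks into the full dyadic square function, establish the $k$-uniform $L^2(\nu)$ estimate for $\nu\in A_2$ using the weighted $L^2$ boundedness of $M$ and of the dyadic square function, and then invoke Rubio de Francia extrapolation. The extra bookkeeping you flag (uniformity in $\omega$, $k$, and the dyadic lattice) is the same implicit care the paper relies on.
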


\begin{proof}
Fix $\omega\in\Omega_0$ and note that clearly
\[
\sum_{I \in \calD^d_{\omega}} |\Delta_{I,k} g|^2
=
\sum_{I \in \calD^d_{\omega}}
\sum_{\substack{J \in \calD^d_{\omega}\\J^{(k)}=I}} |\Delta_{J} g|^2
=
\sum_{I \in \calD^d_{\omega}}
|\Delta_{I} g|^2.
\]
Hence, for every $\nu\in A_2(\R^d)$, the boundedness of $M$ and the dyadic square function on $L^2(\nu)$  imply
\begin{multline*}
\sup_k\Big \| \Big( \E_{\omega} \sum_{I \in \calD^d_{\omega}}
( M (\Delta_{I,k} g ))^2  \Big)^{\frac 12} \Big \|_{L^{2}(\nu)}^2
=
\sup_k
\E_{\omega} \sum_{I \in \calD^d_{\omega}}\int_{\R^d} ( M (\Delta_{I,k} g ))^2 \nu dx
\\
\lesssim
\sup_k\E_{\omega} \sum_{I \in \calD^d_{\omega}}\int_{\R^d} |\Delta_{I,k} g|^2 \nu dx
=\E_{\omega}\Big \| \Big( \sum_{I \in \calD^d_{\omega}} |\Delta_{I} g|^2  \Big)^{\frac 12} \Big \|_{L^{2}(\nu)}^2
\lesssim
\|g\|_{L^2(\nu)}^2.
\end{multline*}
We can now invoke Rubio de Francia extrapolation theorem to obtain at once the desired inequality.
\end{proof}

\begin{Lemma}\label{lemma:paraprod}
Let $\Omega_0$ be a probability space and let $\{\mathcal{D}^d_\omega\}_{\omega\in\Omega_0}$ be a collection of dyadic grids in $\R^d$. Suppose we have a sequence $\{\gamma_I^{\omega}\}_{\omega\in\Omega_0,I\in\mathcal{D}^d_\omega}$ of complex numbers such that
\[
\sup_{\omega\in\Omega_0}\Big(\sup_{I_0\in\mathcal{D}^d_\omega} \frac1{|I_0|} \sum_{I\in\mathcal{D}^d_\omega, I\subset I_0} |\gamma_I^{\omega}|^2\Big)^{\frac12}\le 1.
\]
Then, for every $1<p<\infty$ and $\nu\in A_p(\R^d)$ we have
\[
\Big \| \Big(\E_{\omega} \sum_{I \in \calD^d_{\omega}} |\gamma_I^{\omega} \langle g \rangle_I|^2\,\frac{1_I}{|I|} \Big)^{\frac 12} \Big\|_{L^p(\nu)}
\lesssim
\|g\|_{L^p(\nu)}.
\]
\end{Lemma}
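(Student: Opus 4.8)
The plan is to follow exactly the template used in the proofs of Lemmas~\ref{lem:LowerSF} and \ref{lem:block-SF}: prove the estimate first for $p=2$ and $\nu\in A_2(\R^d)$, and then pass to general $1<p<\infty$, $\nu\in A_p(\R^d)$ by Rubio de Francia extrapolation. Extrapolation is legitimate here because the functional $g\mapsto\big(\E_\omega\sum_{I\in\calD^d_\omega}|\gamma_I^\omega\langle g\rangle_I|^2\frac{1_I}{|I|}\big)^{1/2}$ is sublinear: for each fixed point it is the norm of the vector $\big(\gamma_I^\omega\langle g\rangle_I|I|^{-1/2}1_I\big)_{\omega,I}$ in a suitable $\ell^2$-type space, and $g\mapsto\langle g\rangle_I$ is linear, so the associated family of pairs falls within the scope of the classical extrapolation theorem. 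For the base case $p=2$, by Tonelli's theorem and the identity $\int_{\R^d}\tfrac{1_I}{|I|}\,\nu\,dx=\tfrac{\nu(I)}{|I|}$,
\[
\Big\|\Big(\E_{\omega}\sum_{I\in\calD^d_\omega} |\gamma_I^\omega \langle g\rangle_I|^2\,\frac{1_I}{|I|}\Big)^{\frac12}\Big\|_{L^2(\nu)}^2
=
\E_\omega\sum_{I\in\calD^d_\omega}|\gamma_I^\omega|^2\,|\langle g\rangle_I|^2\,\frac{\nu(I)}{|I|},
\]
so it suffices to bound the inner sum, \emph{for each fixed} $\omega$, by $C\,\|g\|_{L^2(\nu)}^2$ with $C$ depending only on $[\nu]_{A_2}$ and on the uniform Carleson bound $1$ --- in particular \emph{not} on $\omega$ --- after which averaging in $\omega$ closes this case.

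The required uniform single-grid bound $\sum_{I\in\calD^d_\omega}|\gamma_I^\omega|^2|\langle g\rangle_I|^2\frac{\nu(I)}{|I|}\lesssim\|g\|_{L^2(\nu)}^2$ is precisely the weighted $L^2$ inequality for the square function of a dyadic paraproduct with Carleson coefficients. This is classical: it follows either directly from the weighted quadratic Carleson embedding theorem, or by observing that the left-hand side equals $\|S(\pi_{\gamma^\omega}g)\|_{L^2(\nu)}^2$, where $\pi_{\gamma^\omega}g=\sum_I\gamma_I^\omega\langle g\rangle_I h_I$ is the paraproduct and $S$ the dyadic square function, and then combining the weighted Littlewood--Paley inequalities with the $L^2(\nu)$ boundedness of paraproducts for $\nu\in A_2$; see \cite{CMP}. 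I expect this to be the one genuinely non-routine point. In particular, it cannot be reduced to a mere $A_\infty$ (Carleson) estimate: the naïve Cauchy--Schwarz bound $|\langle g\rangle_I|^2\le\frac{\sigma(I)}{|I|^2}\int_I|g|^2\nu$, with $\sigma=\nu^{-1}$, turns the problem into $\sum_I|\gamma_I^\omega|^2\langle|g|^2\nu\rangle_I\lesssim\| |g|^2\nu \|_{L^1}$, which is false in general, so the full strength of the $A_2$ condition (equivalently, a stopping-time argument) is genuinely needed.

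Granting this single-grid estimate, the case $p=2$, $\nu\in A_2(\R^d)$ is complete, and Rubio de Francia extrapolation then upgrades it to all $1<p<\infty$ and all $\nu\in A_p(\R^d)$, exactly as at the end of the proof of Lemma~\ref{lem:block-SF}. As in the companion lemmas, the admissible class of functions $g$ is immaterial: one first proves the inequality for, say, $g\in L^\infty_c(\R^d)$ and then extends it by density.
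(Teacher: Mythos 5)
Your proposal is correct and matches the paper's argument essentially step for step: reduce to $p=2$ by Rubio de Francia extrapolation, and for $p=2$ recognize the left-hand side as $\|S(\pi_{\gamma^\omega}g)\|_{L^2(\nu)}^2$ (via the martingale-difference identity $\Delta_I \pi_{\gamma^\omega}g=\gamma_I^\omega\langle g\rangle_I h_I$), then invoke the weighted dyadic square-function and paraproduct bounds. The paper cites \cite{CMP1} (rather than \cite{CMP}) for the latter, but that is the only difference of note.
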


\begin{proof}
By Rubio de Francia extrapolation we just need to obtain the case $p=2$. Assuming that $\nu\in A_2(\R^d)$ one can see that if we write $f_\omega=\sum_{I \in \calD^d_{\omega}} \gamma_I^{\omega} \langle g \rangle_I\,h_I^\eta$ (for some fixed $\eta \in \{0,1\}^d \setminus \{0\}^d$) then \eqref{Haar-vs-Martingale} easily yields
\begin{multline*}
\Big \| \Big(\E_{\omega} \sum_{I \in \calD^d_{\omega}} |\gamma_I^{\omega} \langle g \rangle_I|^2 \frac{1_I}{|I|}\Big)^{\frac 12} \Big\|_{L^2(\nu)}^2
=
\E_{\omega}  \Big \| \Big(\sum_{I \in \calD^d_{\omega}} |\Delta_I f_\omega|^2\Big)^{\frac 12} \Big\|_{L^2(\nu)}^2
\lesssim
\E_{\omega}  \|f_\omega\|_{L^2(\nu)}^2
\\
=
\E_{\omega}  \Big \| \sum_{I \in \calD^d_{\omega}}\gamma_I^{\omega} \langle g \rangle_I\,h_I^\eta\Big\|_{L^2(\nu)}^2
\lesssim
\|g\|_{L^2(\nu)}^2
,
\end{multline*}
where we have used the $L^2(\nu)$-boundedness of the dyadic square function and the dyadic paraproduct (with the dyadic-BMO function $b_\omega=\sum_{I \in \calD^d_{\omega}}\gamma_I^{\omega} h_I^\eta$), see for instance \cite{CMP1}.
\end{proof}

\subsection{Proof of Theorem \ref{thm:WeightedTensor}}

To set the stage we fix $\vec{p}=(p_1,p_2)$ such that $1<p_1, p_2<\infty$ and set $\frac1p=\frac1{p_1}+\frac1{p_2}$. We also pick $\vec{w}=(w_1,w_2) \in \mathbb{A}_{\vec{p}, n, m}$.

Let $f_1, f_2, f_3$ be finite linear combinations of tensor products of bounded and compactly supported functions. We apply the representation theorem from \cite{LMOV} (see \eqref{eq:repthm}) to both $T_n$ and $T_m$. For $T_n$ we will use $\omega_1$  to denote the random parameter and $U^k_{\omega_1,u_1}$ for the different bilinear model operators. Analogously, for $T_m$ these will be respectively $\omega_2$ and $U^v_{\omega_2,u_2}$.  We also let $\E_\omega:= \E_{\omega_1}\E_{\omega_2}$. Using this notation \eqref{eq:repthm} readily leads to
\[
\langle T_n \otimes T_m(f_1, f_2), f_3 \rangle = C_{T_n}C_{T_m}
 \E_\omega \sum_{k \in \N_0^3} \sum_{v \in \N_0^3}   \sum_{u_1=1}^{C_n}\sum_{u_2=1}^{C_m}
c_{k,v} \langle U^k_{\omega_1,u_1} \otimes U^v_{\omega_2,u_2} (f_1,f_2), f_3 \rangle,
\]
where $c_{k,v}:= 2^{-\max_i k_i \frac{\alpha_1}2 -\max_i v_i \frac{\alpha_2}2}$ and $\alpha_1$ and $\alpha_2$ are respectively the  parameters in the H\"older continuity assumptions of the kernels of $T_n$ and $T_m$. It is not too hard to show, and this also follows directly from \cite{LMV1}, that
$$
|\langle U^k_{\omega_1,u_1} \otimes U^v_{\omega_2,u_2} (f_1,f_2), f_3 \rangle| \lesssim \|f_1\|_{L^3} \|f_2\|_{L^3} \|f_3\|_{L^3},
$$
 uniformly in $\omega_1, \omega_2, k,v$.
Since finite linear combinations of tensor products of bounded and compactly supported functions are dense in $L^3$, this implies
via the above representation that $T_n \otimes T_m \colon L^3 \times L^3 \to L^{\frac32}$ is bounded, and that
the representation holds for $f_1,f_2,f_3 \in L^\infty_c(\R^{n+m})$.

Take next $f_1, f_2 \in L^{\infty}_c(\R^{n+m})$. It is easy to see that if $p \ge 1$ we have
$$
\| T_n \otimes T_m (f_1,f_2)w\|_{L^p }
\lesssim \sum_{k \in \N_0^3}\sum_{v \in \N_0^3} \sum_{u_1,u_2}
c_{k,v}
\| \E_\omega  U^k_{\omega_1,u_1} \otimes U^v_{\omega_2,u_2} (f_1,f_2) w\|_{L^p},
$$
and if $p<1$, then
$$
\| T_n \otimes T_m(f_1,f_2) w\|_{L^p }^p
\lesssim \sum_{k \in \N_0^3}\sum_{v \in \N_0^3}  \sum_{u_1,u_2}
c_{k,v}^p
\| \E_\omega  U^k_{\omega_1,u_1} \otimes U^v_{\omega_2,u_2} (f_1,f_2)w \|_{L^p}^p.
$$
We claim that
\begin{equation}\label{eq:TensModWeighted}
\| \E_\omega  U^k_{\omega_1,u_1} \otimes U^v_{\omega_2,u_2} (f_1,f_2) w\|_{L^p}
\lesssim
C_{k,v}\| f_1 w_1\|_{L^{p_1}}\| f_2w_2 \|_{L^{p_2}}
\end{equation}
holds with $C_{k,v}:=2^{\max_i k_i \frac{\alpha_1}4 +\max_i v_i \frac{\alpha_2}4}$. With this in hand, all the previous observations readily lead us to the desired estimate for $f_1, f_2 \in L^{\infty}_c(\R^{n+m})$. By density this completes the proof of Theorem \ref{thm:WeightedTensor}.

We are left with showing our claim \eqref{eq:TensModWeighted}. Let us observe that when $p \ge1$ we could take the expectation
out of the $L^p$ norm and prove a uniform estimate for all $\omega$. However, in the case $p<1$ we cannot do this, and therefore we need to keep working with the expectation. We consider two cases, whether
$U^v_{\omega_2,u_2}$ is a bilinear dyadic shift or is a bilinear dyadic paraproduct.

\medskip

\noindent\textbf{Case 1:} Proof of \eqref{eq:TensModWeighted} when $U^v_{\omega_2,u_2}$ is a dyadic bilinear shift.

To simplify the notation we write $U^k_{\omega_1}=U^k_{\omega_1,u_1}$ and $U^v_{\omega_2}=U^v_{\omega_2,u_2}$. In this scenario $U^v_{\omega_2}$ is of the form $\mathcal{U}_{\mathcal{D}_{\omega_2}^m}^v$ (see \eqref{dyadic-shift}). Recall that dyadic shifts have three different expressions depending where the non-cancellative Haar function is located. For now we consider the case on which  the non-cancellative Haar function is in the ``second'' position, that is,
\[
U^v_{\omega_2}(g_1,g_2)
=
\sum_{V \in \calD^m_{\omega_2}} \sum_{\substack{J_1, J_2,J_3 \in \calD^m_{\omega_2} \\ J_i^{(v_i)}=V}}
a^{\omega_2}_{V, (J_i)}\langle g_1, h_{J_1} \rangle, \langle g_2, h_{J_2}^0 \rangle h_{J_3}.
\]
The other possibilities of $U^v_{\omega_2}$ will be handled in a similar manner.

If we next write $U_\omega:= U^k_{\omega_1} \otimes U^v_{\omega_2}$ one can easily see that
\begin{equation}\label{eq:SpecialShift}
U_{\omega}(f_1,f_2)
=\sum_{V \in \calD^m_{\omega_2}} \sum_{\substack{J_1, J_2,J_3 \in \calD^m_{\omega_2} \\ J_i^{(v_i)}=V}}
a^{\omega_2}_{V, (J_i)} U_{\omega_1}^k (\langle f_1, h_{J_1} \rangle_2, \langle f_2, h_{J_2}^0 \rangle_2) \otimes h_{J_3},
\end{equation}
where $\langle\cdot,\cdot\rangle_2$ stands for the inner product with respect to the variable $x_2$ ---recall that we always write $x=(x_1,x_2)$ with $x_1\in\R^n$ and $x_2\in\R^m$. Here it is useful to remember that the Haar functions live in $\R^m$, hence they only depend on $x_2$.

 Let us recall that by assumption $\vec{w}=(w_1,w_2) \in \mathbb{A}_{\vec{p}, n, m}$, see Definition \ref{def:LargerClass}. For $w=w_1 w_2$ we have by H\"older's inequality that
\begin{multline*}
\esssup_{x_1\in\re^n} \big[w(x_1, \cdot)^p ]_{A_{2p}(\R^m)}
\le
\esssup_{x_1\in\re^n} \big[w_1(x_1, \cdot)^{p_1}]_{A_{p_1}(\R^m)}^{\frac{p}{p_1}} \big[w_2(x_1, \cdot)^{p_2}]_{A_{p_2}(\R^m)}^{\frac{p}{p_2}}<\infty.
\end{multline*}
Hence $w(x_1, \cdot)^p \in A_\infty(\R^m)$ uniformly in $x_1\in \re^n$. With $x_1\in\R^n$ fixed, we can invoke Lemma \ref{lem:LowerSF} with $g_\omega := U_\omega(f_1,f_2)(x_1, \cdot)$ and the weight $w(x_1, \cdot)^p \in A_\infty(\R^m)$.  Then, we integrate over $x_1 \in \R^n$ and after some straightforward computations we arrive at
\begin{multline}\label{eq:ApplyLowerSF}
\iint \displaylimits _{\R^{n+m}}  |\E_\omega U_{\omega}(f_1,f_2)|^p w^p dx\\
\lesssim \iint \displaylimits _{\R^{n+m}} \Big( \E_{\omega} \Big [
\sum_{\substack{J_3, V \in \calD^m_{\omega_2} \\ J_3 ^{(v_3)}=V}}
\Big | \sum_{\substack{J_1, J_2 \in \calD^m_{\omega_2} \\ J_i^{(v_i)}=V}}
a^{\omega_2}_{V, (J_i)} U_{\omega_1}^k (\langle f_1, h_{J_1} \rangle_2, \langle f_2, h_{J_2}^0 \rangle_2) \otimes h_{J_3} \Big|^2 \Big]^{\frac{1}{2}} \Big)^p w^pdx.
\end{multline}
Let $s= (\frac{8m}{\alpha})'$ so that $\frac{m}{s'}=\frac{\alpha}{8}$ and clearly $1<s\le \frac87<2$. Let $J_3, V \in \calD^m_{\omega_2}$ be such that $J_3^{(v_3)}=V$.
From the size estimate of the coefficients $a^{\omega_2}_{V, (J_i)}$ we get that
\begin{multline}\label{eq:Intros}
\Big | \sum_{\substack{J_1, J_2 \in \calD^m_{\omega_2} \\ J_i^{(v_i)}=V}}
a^{\omega_2}_{V, (J_i)} U_{\omega_1}^k (\langle f_1, h_{J_1} \rangle_2, \langle f_2, h_{J_2}^0 \rangle_2) \otimes h_{J_3} \Big |
\\
\le 2^{\max_i v_i\frac{\alpha}{4}} \Big( \sum_{\substack{J_1, J_2 \in \calD^m_{\omega_2} \\ J_i^{(v_i)}=V}}
|U_{\omega_1}^k( f_{1,J_1,V}, f_{2,J_2,V}) |^s\Big)^{\frac{1}{s}} \otimes 1_{J_3},
\end{multline}
where we abbreviated $f_{1,J_1,V}:=\frac{|J_1|^{\frac12}}{|V|}\langle f_1, h_{J_1} \rangle_2$ and
$f_{2,J_2,V}:=\frac{|J_2|^{\frac12}}{|V|}\langle f_2, h_{J_2}^0 \rangle_2$.
Using this in \eqref{eq:ApplyLowerSF}
we have
\begin{multline}\label{weqfwferfgvea}
\Big(\iint \displaylimits _{\R^{n+m}}  |\E_\omega U_{\omega}(f_1,f_2)|^p w^p dx\Big)^{\frac{1}{p}}\\
\lesssim
2^{\max_iv_i \frac{\alpha}{4}} \Big(\iint \displaylimits _{\R^{n+m}} \Big( \E_{\omega} \Big [
\sum_{V \in \calD^m_{\omega_2}}
\Big ( \sum_{\substack{J_1, J_2 \in \calD^m_{\omega_2} \\ J_i^{(v_i)}=V}}
| U_{\omega_1}^k( f_{1,J_1,V}, f_{2,J_2,V}) |^s\Big)^{\frac2s} \otimes 1_{V} \Big]^{\frac12} \Big)^p w^p dx\Big)^{\frac 1p}.
\end{multline}

We next introduce some notation. Write $M^2 f(x) = M(f(x_1, \cdot))(x_2)$, where $M$ stands for the Hardy-Littlewood maximal function (over cubes) in $\R^m$ and
\begin{equation}\label{erfafer}
\Delta^2_{V,v_1} f_1(x) = \mathop{\sum_{J_1 \in \calD^m_{\omega_2}}}_{J_1^{(v_1)} = V} \Delta_{J_1}(f_1(x_1, \cdot))(x_2), \qquad V \in \calD^m_{\omega_2},
\end{equation}
and when $v_1=0$ we will simply write $\Delta_V^2$.
Recalling the definition of $f_{1,J_2,V}$ and \eqref{Haar-vs-Martingale} one can see that
\[
|f_{1,J_1,V}|
=
\frac{|J_1|^{\frac12}}{|V|}\big|\langle f_1, h_{J_1} \rangle_2\big|
=
\frac{|J_1|^{\frac12}}{|V|}\big|\langle \Delta^2_{V,v_1} f_1, h_{J_1} \rangle_2\big|
\le
\frac1{|V|}\int_{J_1} |\Delta^2_{V,v_1} f_1|dx_2,
\]
and, analogously,
\[
|f_{2,J_2,V}|
=
\frac{|J_2|^{\frac12}}{|V|}\big|\langle f_2, h_{J_2}^0 \rangle_2\big|
=
\frac{1}{|V|}\big|\langle f_2, 1_{J_2} \rangle_2\big|
\le
\frac1{|V|}\int_{J_2} |f_2|dx_2.
\]
Thus,
\begin{equation}\label{qrfafcwa}
\sum_{\substack{J_1 \in \calD^m_{\omega_2} \\ J_1^{(v_1)}=V}}   |f_{1,J_1,V}| \otimes 1_V
\le M^2 (\Delta^2_{V,v_1} f_1) \quad \text{ and } \quad
\sum_{\substack{J_2 \in \calD^m_{\omega_2} \\ J_2^{(v_2)}=V}} |f_{2,J_2,V}| \otimes1_V
\le M^2 f_2.
\end{equation}

Recall that $(w_1(\cdot,x_2),w_2(\cdot,x_2))\in A_{\vec{p}}(\R^n)$ for a.e. $x_2\in\re^m$ uniformly on $x_2$ (see Definition \ref{def:LargerClass}), and that $U_{\omega_1}^k$ satisfies the estimate \eqref{eq:ModelWeighted}. Therefore, we may
invoke Lemma \ref{lem:MZ} with the choice $q_1=2$, $q_2=\infty$, $q=2$ and with $s$ as above to conclude that
\begin{align*}
&\Big(\int_{\R^{n}} \Big( \E_{\omega} \Big [
\sum_{V \in \calD^m_{\omega_2}}
\Big ( \sum_{\substack{J_1, J_2 \in \calD^m_{\omega_2} \\ J_i^{(v_i)}=V}}
| U_{\omega_1}^k( f_{1,J_1,V}, f_{2,J_2,V}) |^s\Big)^{\frac2s} \otimes 1_{V}(x_2) \Big]^{\frac12} \Big)^p w(\cdot,x_2)^p dx_1\Big)^{\frac1p}
\\
&\lesssim
 (1+\max_i k_i)
\Big(\int _{\R^{n}} \Big (\E_{\omega_2} \sum_{V \in \calD^m_{\omega_2}}
\Big( \sum_{\substack{J_1 \in \calD^m_{\omega_2} \\ J_1^{(v_1)}=V}}
|f_{1,J_1,V}|^s \otimes 1_V(x_2) \Big)^\frac{2}{s} \Big)^{\frac {p_1}{2}} w_1(\cdot,x_2)^{p_1} dx_1\Big)^{\frac {1}{p_1}}
\\
&\qquad \times \Big( \int_{\R^{n}} \Big( \E_{\omega_2} \Big(\sup_{V \in \calD^m_{\omega_2}} \Big(
\sum_{\substack{J_2 \in \calD^m_{\omega_2} \\ J_2^{(v_2)}=V}}
|f_{2,J_2,V}|^s \otimes 1_V(x_2) \Big)^\frac{2}{s} \Big)^{\frac {p_2}{2}} w_2(\cdot,x_2)^{p_2} dx_1\Big)^{\frac {1}{p_2}}
\\
&\le
 (1+\max_i k_i)
\Big(\int _{\R^{n}} \Big (\E_{\omega_2} \sum_{V \in \calD^m_{\omega_2}}
\big( M^2 (\Delta^2_{V,v_1} f_1)  \big)^2 \Big)^{\frac {p_1}{2}} w_1(\cdot,x_2)^{p_1} dx_1\Big)^{\frac {1}{p_1}}
\\
&\qquad \times \Big( \int_{\R^{n}} (M^2 f_2)^{p_2} w_2(\cdot,x_2)^{p_2} dx_1\Big)^{\frac {1}{p_2}},
\end{align*}
where in the second estimate we have used that $s>1$ and \eqref{qrfafcwa}. This, Hölder's inequality, the fact that by assumption
$w_1(x_1, \cdot)^{p_1} \in A_{p_1}(\R^m)$ and $w_2(x_1, \cdot)^{p_2} \in A_{p_2}(\R^m)$ uniformly for a.e. $x_1 \in \R^n$ (see Definition \ref{def:LargerClass}), and Lemma \ref{lem:block-SF} readily give
\begin{align*}
&\Big(  \iint \displaylimits _{\R^{n+m}} \Big( \E_{\omega} \Big [
\sum_{V \in \calD^m_{\omega_2}}
\Big ( \sum_{\substack{J_1, J_2 \in \calD^m_{\omega_2} \\ J_i^{(v_i)}=V}}
| U_{\omega_1}^k( f_{1,J_1,V}, f_{2,J_2,V}) |^s\Big)^{\frac 2s} \otimes 1_{V} \Big]^{\frac 12} \Big)^p w^p dx\Big)^{\frac 1p} \\
&
\
\lesssim
(1+\max_i k_i)
\Big( \iint \displaylimits _{\R^{n+m}} \Big (\E_{\omega} \sum_{V \in \calD^m_{\omega_2}}
\big( M^2 (\Delta^2_{V,v_1} f_1)  \big)^2 \Big)^{\frac {p_1}{2}}  w_1^{p_1} dx \Big)^{\frac {1}{p_1}} \\
&\qquad \times \Big( \iint \displaylimits _{\R^{n+m}} (M^2 f_2)^{p_2}  w_2^{p_2} dx \Big)^{\frac {1}{p_2}}
\\
&
\
\lesssim
(1+\max_i k_i)
\|f_1 w_1\|_{L^{p_1}}\|f_2 w_2\|_{L^{p_2}}.
\end{align*}
Plugging this into \eqref{weqfwferfgvea} we arrive at \eqref{eq:TensModWeighted} in the case when $U_{\omega_1} \otimes U_{\omega_2}$ is of the form \eqref{eq:SpecialShift}.

\medskip

Let us now consider the cases in which $ U_{\omega_2}^v$ is a bilinear dyadic shift of some other form.
The case in which the non-cancellative Haar function is located in the ``first'' position (i.e., our operator is written in terms of
$h_{J_1}$, $h^0_{J_2}$ and $h_{J_3}$) is clearly symmetric to the case we treated already (one just needs to switch the roles of $f_1$ and $f_2$). Thus, we only need to consider the case when
\begin{equation}\label{eq:SpecialShift2}
U_{\omega}(f_1,f_2)
=\sum_{V \in \calD^m_{\omega_2}} \sum_{\substack{J_1, J_2,J_3 \in \calD^m_{\omega_2} \\ J_i^{(v_i)}=V}}
a^{\omega_2}_{V, (J_i)} U_{\omega_1}^k (\langle f_1, h_{J_1} \rangle_2, \langle f_2, h_{J_2} \rangle_2) \otimes h^0_{J_3}.
\end{equation}
This time, rather than using the lower bound for the square function estimate, we just proceed as in \eqref{eq:Intros}, putting absolute values inside and using H\"older's inequality with  $s$ as before.
This gives that
\begin{equation*}
|U_{\omega}(f_1,f_2)|
\le
 2^{\max v_i \alpha /4}
\sum_{V \in \calD^m_{\omega_2}}
\Big( \sum_{\substack{J_1, J_2 \in \calD^m_{\omega_2} \\ J_i^{(v_i)}=V}}
|U_{\omega_1}^k( f_{1,J_1,V}, f_{2,J_2,V}) |^s\Big)^{\frac{1}{s}} \otimes 1_{V},
\end{equation*}
where $f_{1,J_1,V}:=\frac{|J_1|^{\frac12}}{|V|}\langle f_1, h_{J_1} \rangle_2$ and
$f_{2,J_2,V}:=\frac{|J_2|^{\frac12}}{|V|}\langle f_2, h_{J_2} \rangle_2$. We can proceed much as before, with the difference that Lemma \ref{lem:MZ} is used with the exponents $q_1=q_2=2$, $q=1$ and also that since $h_{J_2}$ is cancellative we have the analog of the first estimate in \eqref{qrfafcwa} for $f_{2,J_2,V}$. This gives that
\begin{align}\label{eq:UseMZ2}
&\|\E_\omega   U_{\omega}(f_1,f_2)w\|_{L^p}
\\
&\qquad\lesssim  2^{\max v_i \alpha /4}  (1+\max_i k_i)
\Big\|\Big (\E_{\omega} \sum_{V \in \calD^m_{\omega_2}}
\big( M^2 (\Delta^2_{V,v_1} f_1)  \big)^2 \Big)^{\frac {1}{2}}  w_1\Big\|_{p_1} \\
&\quad\qquad\qquad \times
\Big\|\Big (\E_{\omega} \sum_{V \in \calD^m_{\omega_2}}
\big( M^2 (\Delta^2_{V,v_2} f_2)  \big)^2 \Big)^{\frac {1}{2}}  w_1\Big\|_{p_2}
\\
&\qquad\lesssim  2^{\max v_i \alpha /4}  (1+\max_i k_i)
\|f_1 w_1\|_{L^{p_1}}\|f_2 w_2\|_{L^{p_2}},
\end{align}
where in the last estimate we have used that $w_1(x_1, \cdot)^{p_1} \in A_{p_1}(\R^m)$ and $w_2(x_1, \cdot)^{p_2} \in A_{p_2}(\R^m)$ uniformly for a.e. $x_1 \in \R^n$ (see Definition \ref{def:LargerClass}), and Lemma \ref{lem:block-SF}. This ends our treatment of the case when $U_{\omega_2}$ is a dyadic bilinear shift.

\medskip

\noindent\textbf{Case 2:} Proof of \eqref{eq:TensModWeighted} when $U^v_{\omega_2,u_2}$ is a dyadic bilinear paraproduct.

As before to simplify the notation we write  $U^k_{\omega_1}=U^k_{\omega_1,u_1}$ and $U^v_{\omega_2}=U^v_{\omega_2,u_2}$. Here we assume that $U_{\omega_2}^v$ is of the form $\mathcal{U}_{\calD_{\omega_2}^m}$, see \eqref{dyadic-para}, and hence  $v=(0,0,0)$. On the other hand,  $U_{\omega_1}^k$ is any model operator. The argument for this case is easier than the previous one: before we used  the Marcinkiewicz-Zygmund type estimate in
Lemma \ref{lem:MZ} to deal with the complexity of $U_{\omega_2}^v$ and in this scenario the complexity is zero.

Recall that dyadic bilinear paraproducts have three different expressions depending on where the cancellative function is located. We first consider the case where

\[
U^v_{\omega_2}(g_1,g_2)
=
\sum_{V \in \calD^m_{\omega_2}} a^{\omega_2}_{V}\langle g_1\rangle_V \langle g_2\rangle_V h_{V},
\]
and the other cases will be treated below.

We write as before  $U_\omega:= U^k_{\omega_1} \otimes U^v_{\omega_2}$ so that
\begin{equation}\label{eq:SpecialPara}
U_\omega(f_1,f_2)
= \sum_{V \in \calD^m_{\omega_2}} a_V^{\omega_2}
U_{\omega_1}^k ( \langle f_1 \rangle_{V,2}, \langle f_2 \rangle_{V,2}) \otimes h_V.
\end{equation}
As shown in the previous case $w_1(x_1, \cdot)^p w_2(x_1, \cdot)^p \in A_\infty(\R^m)$ uniformly in $x_1\in \re^n$.  With $x_1\in\R^n$ fixed, we can use Lemma \ref{lem:LowerSF} with $g_\omega := U_\omega(f_1,f_2)(x_1, \cdot)$ and the weight $w_1(x_1, \cdot)^p w_2(x_1, \cdot)^p \in A_\infty(\R^m)$.
After that we integrate on $x_1$ and arrive at
\[
\| \E_\omega U_\omega(f_1,f_2) w\|_{L^p}
\lesssim \Big \| \E_{\omega} \Big( \sum_{V \in \calD^m_{\omega_2}}  | a_V^{\omega_2}
U_{\omega_1} ( \langle f_1 \rangle_{V,2}, \langle f_2 \rangle_{V,2}) \otimes h_V |^2 \Big)^{\frac 12} w\Big \|_{L^p}.
\]
Recall that $(w_1(\cdot,x_2),w_2(\cdot,x_2))\in A_{\vec{p}}(\R^n)$ for a.e. $x_2\in\re^m$ uniformly on $x_2$ (see Definition \ref{def:LargerClass}) and that  $U_{\omega_1}$ satisfies \eqref{eq:ModelWeighted}. Thus, we may with fixed $x_2\in\R^m$
invoke Lemma \ref{lem:MZ} with the choice $q_1=2$, $q_2=\infty$ and $q=2$  in the very special case where the inner $\ell^s$-sums have only one non-zero term. This gives that
\begin{align*}
&\Big(\int_{\R^{n}} \Big( \E_{\omega} \Big [
\sum_{V \in \calD^m_{\omega_2}}
|a_V^{\omega_2}  U_{\omega_1}^k ( \langle f_1 \rangle_{V,2}, \langle f_2 \rangle_{V,2})\otimes h_{V}(x_2)|^2 \Big]^{\frac12} \Big)^p w(\cdot,x_2)^p dx_1\Big)^{\frac1p}
\\
&\ \lesssim
 (1+\max_i k_i)
\Big(\int _{\R^{n}} \Big (\E_{\omega_2} \sum_{V \in \calD^m_{\omega_2}}
|a_V^{\omega_2} \langle f_1 \rangle_{V,2}\otimes h_{V}(x_2)|^2\Big)^{\frac {p_1}{2}} w_1(\cdot,x_2)^{p_1} dx_1\Big)^{\frac {1}{p_1}}
\\
&\qquad \times \Big( \int_{\R^{n}} \Big( \E_{\omega_2} \sup_{V \in \calD^m_{\omega_2}} |\langle f_2 \rangle_{V,2}\otimes 1_{V}(x_2)|^2\Big)^{\frac {p_2}{2}} w_2(\cdot,x_2)^{p_2} dx_1\Big)^{\frac {1}{p_2}}
\\
&\ \lesssim
(1+\max_i k_i)
\Big(\int _{\R^{n}} \Big (\E_{\omega_2} \sum_{V \in \calD^m_{\omega_2}}
|a_V^{\omega_2} \langle f_1 \rangle_{V,2}\otimes h_{V}(x_2)|^2\Big)^{\frac {p_1}{2}} w_1(\cdot,x_2)^{p_1} dx_1\Big)^{\frac {1}{p_1}}
\\
&\qquad \times \Big( \int_{\R^{n}} (M^2 f_2)^{p_2} w_2(\cdot,x_2)^{p_2} dx_1\Big)^{\frac {1}{p_2}},
\end{align*}
where we have used the pointwise inequality
$\sup_{V \in \calD^m_{\omega_2}}
|\langle f_2 \rangle_{V,2}| \otimes 1_V
\le M^2f_2$. Using the obtained estimates, Hölder's inequality, the fact that by assumption
$w_1(x_1, \cdot)^{p_1} \in A_{p_1}(\R^m)$ and $w_2(x_1, \cdot)^{p_2} \in A_{p_2}(\R^m)$ uniformly for a.e. $x_1 \in \R^n$ (see Definition \ref{def:LargerClass}), and Lemma \ref{lemma:paraprod} we conclude that
\begin{multline*}
\| \E_\omega U_\omega(f_1,f_2) w\|_{L^p}
\lesssim
 (1+\max_i k_i) \Big \| \Big( \E_\omega \sum_{V \in \calD^m_{\omega_2}}
|a_V^{\omega_2} \langle f_1 \rangle_{V,2}|^2\otimes \frac{1_V}{|V|}  \Big)^{\frac 12}w_1 \Big \|_{L^{p_1}}
\\
\times\Big( \iint \displaylimits _{\R^{n+m}} (M^2 f_2)^{p_2}  w_2^{p_2} dx \Big)^{\frac {1}{p_2}}
\lesssim
(1+\max_i k_i)
\|f_1 w_1\|_{L^{p_1}}  \|f_2 w_2\|_{L^{p_2}}.
\end{multline*}
This gives the desired estimate in the present case. Let us finally consider the remaining two possibilities for  $U_{\omega_2}^v$. By symmetry, we may suppose that
\[
U_\omega(f_1,f_2)
= \sum_{V \in \calD^m_{\omega_2}} a_V^{\omega_2}
U_{\omega_1}^k ( \langle f_1\rangle_{2}, \langle f_2, h_V  \rangle_{V,2}) \otimes \frac{1_V}{|V|}.
\]
and the other cases is treated similarly switching the roles of $f_1$ and $f_2$. This time we do not use the lower square function estimate and directly apply Lemma \ref{lem:MZ} with exponents $q_1=q_2=2$, $q=1$ and again the inner $\ell^s$-sums have only one non-zero term. Hence, much as before we obtain
\begin{multline*}
\| \E_\omega U_\omega(f_1,f_2) w\|_{L^p}
\lesssim
 (1+\max_i k_i)
\Big \| \Big( \E_\omega \sum_{V \in \calD^m_{\omega_2}}
|a_V^{\omega_2} \langle f_1 \rangle_{V,2}|^2\otimes \frac{1_V}{|V|}  \Big)^{\frac 12}w_1 \Big \|_{L^{p_1}}
\\
\times \Big \| \Big( \E_\omega \sum_{V \in \calD^m_{\omega_2}}
|\langle f_2,h_V \rangle_{V,2}|^2\otimes \frac{1_V}{|V|}  \Big)^{\frac 12}w_2 \Big \|_{L^{p_2}}
\\
\lesssim
 (1+\max_i k_i) \|f_1 w_1\|_{L^{p_1}}  \|f_2 w_2\|_{L^{p_2}},
\end{multline*}
where the term corresponding to $f_1$ has been treated as before. For $f_2$, using the notation introduced in \eqref{erfafer}), one first sees that
\eqref{Haar-vs-Martingale} gives
\begin{multline*}
|\langle f_2,h_V \rangle_{V,2}|\otimes \frac{1_V}{|V|^{\frac12}}
=
|\langle \Delta_V^2 f_2,h_V \rangle_{V,2}|\otimes \frac{1_V}{|V|^{\frac12}}
\\
\le
\frac1{|V|}\int_{V} |\Delta^2_{V} f_2|dx_2\otimes 1_V
\le
M^2 (\Delta^2_{V} f_2)
\end{multline*}
and therefore Lemma \ref{lem:block-SF} with $k=0$, along with the fact that $w_2(x_1, \cdot)^{p_2} \in A_{p_2}(\R^m)$ uniformly for a.e. $x_1 \in \R^n$ (see Definition \ref{def:LargerClass}), yields
\begin{multline*}
\Big \| \Big( \E_\omega \sum_{V \in \calD^m_{\omega_2}}
|\langle f_2,h_V \rangle_{V,2}|^2\otimes \frac{1_V}{|V|}  \Big)^{\frac 12}w_2 \Big \|_{L^{p_2}}
\\
\le
\Big \| \Big( \E_\omega \sum_{V \in \calD^m_{\omega_2}} (M^2 (\Delta^2_{V} f_2))^2 \Big)^{\frac 12}w_2 \Big \|_{L^{p_2}}
\lesssim
\|f_2w_2\|_{L^{p_2}}.
\end{multline*}
This concludes the proof of \eqref{eq:TensModWeighted}, and hence that of Theorem \ref{thm:WeightedTensor}. \qed

\subsection{Proof of Corollary \ref{cor:TensorEndpoint}}

We proceed by extrapolation. Fix $1<q_1,q_2<\infty$ and let $\frac1q=\frac1{q_1}+\frac1{q_2}> 0$. Given $f_1, f_2 \in L^{\infty}_c(\R^{n+m})$ define for every $x_1\in\R^n$ and for $i=1,2$
\begin{equation}\label{afawfwf}
F(x_1):= \| T_n \otimes T_m (f_1,f_2)(x_1, \cdot) \|_{L^q(\R^m)}
\quad \text{and} \quad  F_i(x_1):= \| f_i(x_1, \cdot) \|_{L^{q_i}(\R^m)}
\end{equation}
Suppose $\vec{w}=( w_1, w_2) \in A_{\vec{q}}(\R^n)$ with $\vec{q}=(q_1,q_2)$ and as usual we define $w=w_1w_2$. It is easy to see that $(w_1\otimes 1, w_2\otimes 1)\in \mathbb A_{\vec{q}, n, m}$.
From Theorem \ref{thm:WeightedTensor} ---here it is crucial that $1<q_1,q_2<\infty$--- we see that
\begin{align}\label{22qffd}
\| F w \|_{L^q(\R^n)}
&= \| T_n \otimes T_m (f_1,f_2)  (w\otimes 1) \|_{L^q(\R^{n+m})} \\ \nonumber
&\lesssim
\| f_1 (w_1\otimes 1) \|_{L^{q_1}(\R^{n+m})}\| f_2 (w_2\otimes 1) \|_{L^{q_2}(\R^{n+m})} \\ \nonumber
&=\| F_1 w_1 \|_{L^{q_1}(\R^n)} \| F_2 w_2 \|_{L^{q_2}(\R^n)}.
\end{align}
We can extrapolate from this estimate with the family $\mathcal{F}$ consisting on the triples $(F,F_1, F_2)$ and Theorem \ref{theor:extrapol-general} (with $\vec{r}=(1,1, 1)$) gives with $\vec{p}=(p_1,p_2)$ and $1<p_1,p_2\le\infty$ with $\frac1p=\frac1{p_1}+\frac1{p_2}>0$ that
$$
\| F w \|_{L^p(\R^n)}
\lesssim \| F_1 w_1 \|_{L^{p_1}(\R^n)} \| F_2 w_2 \|_{L^{p_2}(\R^n)}
$$
for all $(w_1,w_2) \in A_{\vec{p}}(\R^n)$.
This in the special case $w_1 \equiv 1$ and $w_2 \equiv 1$ gives us the estimate
\begin{multline}\label{eq:eq1afvrv}
\| T_n \otimes T_m (f_1,f_2) \|_{L^{p}(\R^n; L^{q}(\R^m))}
=
\| F\|_{L^p(\R^n)}
\\
\lesssim
\| F_1 \|_{L^{p_1}(\R^n)} \| F_2 \|_{L^{p_2}(\R^n)}
\lesssim \|f_1 \|_{L^{p_1}(\R^n; L^{q_1}(\R^m))}
\| f_2 \|_{L^{p_2}(\R^n; L^{q_2}(\R^m))}
\end{multline}
for all  $f_1, f_2 \in L^{\infty}_c(\R^n; L^{q_1}(\R^m))$. This ends the proof of Corollary \ref{cor:TensorEndpoint}.
\qed

\subsection{Proof of Theorem \ref{theor:TensorEndpoint}}

We first observe that a trivial density argument allows us to see that Corollary \ref{cor:TensorEndpoint} readily gives the desired estimate for all $f_1\in {L^{p_1}(\R^n; L^{q_1}(\R^m))}$ and $f_2\in {L^{p_2}(\R^n; L^{q_2}(\R^m))}$ provided $1<p_1,p_2,q_1,q_2<\infty$. This means that we only need to consider the cases $q_1=\infty$ or $q_2=\infty$ and/or  $p_1=\infty$ or $p_2=\infty$. In these scenarios we need to justify why the operators are well-defined and also get the desired estimates. To accomplish all these we split the argument in two main steps which are almost independent, and each of them is interesting in its own right. In \textbf{Case 1} we use sparse domination techniques and the main goal is to treat the cases $q_1=\infty$ or $q_2=\infty$ on which our main extrapolation result is not useful. However, we prove more, mostly because in \textbf{Case 2} we will need to know that the operators are well-defined for some class of functions. As a result, \textbf{Case 1} deals with the exponents  $1<p_1,p_2\le\infty$ with $\frac1p=\frac1{p_1}+\frac1{p_2}>0$ and $1<q_1,q_2\le \infty$ with $0<\frac1q=\frac1{q_1}+\frac1{q_2}\le 1$. We note that the restriction $q\ge 1$ is natural since we use sparse domination and duality, hence we need to be in the Banach range at least for $q$. In this direction, if $q_1=\infty$ (resp. $q_2=\infty$) then $q=q_2>1$ (resp. $q=q_1>1$), thus in these two cases we do not have a real restriction. In \textbf{Case 2}, where $1<q_1,q_2<\infty$, we are going to use our extrapolation result again, Theorem \ref{theor:extrapol-general}, and in doing so we need to take a detour to show that the operators are well-defined and this is where \textbf{Case 1} is invoked in a particular case where the $q_i$'s are finite. It is important to note that at the end-point cases (i.e., when some of the $p_i$'s or $q_i$'s are infinity) we are able to obtain the desired estimates for functions in the corresponding spaces and not just in $L^\infty_c(\R^{n+m})$. This requires to justify why the operators are well-defined and we do this by duality using the two adjoints of our operators. This makes some of our argument tedious needing to consider several cases quite carefully. If one were interested in obtaining the desired estimates just for compactly supported functions one could skip some of those cases.

\medskip

\noindent\textbf{Case 1:}  $1<p_1,p_2\le\infty$ with $\frac1p=\frac1{p_1}+\frac1{p_2}>0$ and $1<q_1,q_2\le \infty$ with $0<\frac1q=\frac1{q_1}+\frac1{q_2}\le 1$.

We first prove  certain vector-valued sparse domination result using dyadic model operators.
The starting point is that as observed above we already know that for $f_1, f_2, f_3 \in L^{\infty}_c(\R^{n+m})$ we have
\[
\langle T_n \otimes T_m(f_1, f_2), f_3 \rangle = C_{T_n}C_{T_m}
\E_\omega \sum_{k \in \N_0^3} \sum_{v \in \N_0^3}   \sum_{u_1=1}^{C_n}\sum_{u_2=1}^{C_m}
c_{k,v} \langle U^k_{\omega_1,u_1} \otimes U^v_{\omega_2,u_2} (f_1,f_2), f_3 \rangle,
\]
where $c_{k,v}:= 2^{-\max_i k_i \frac{\alpha_1}2 -\max_i v_i \frac{\alpha_2}2}$ and $\alpha_1$ and $\alpha_2$ are respectively
the parameters in the H\"older continuity assumptions of the kernels of $T_n$ and $T_m$.
Denote $U_\omega:= U_{\omega_1, u_1}^{k} \otimes U_{\omega_2, u_2}^{v}$.

Then, with $1<q_1, q_2\le \infty$ so that $0<1/q := 1/q_1 + 1/q_2\le 1$ and for any $f_1, f_2, f_3 \in L^{\infty}_c(\R^{n+m})$ we write
$$
\langle U_{\omega}(f_1,f_2), f_3\rangle
=\sum_{K \in \calD^n_{\omega_1, u_1}} \sum_{\substack{I_1, I_2, I_3 \in \calD^n_{\omega_1, u_1} \\ I_i^{(k_i)}=K}}
a^{\omega_1, u_1}_{K, (I_i)} \langle U_{\omega_2,u_2}^v (\langle f_1, \tilde h_{I_1} \rangle_1, \langle f_2, \tilde h_{I_2} \rangle_1), \langle f_3, \tilde h_{I_3}\rangle_1 \rangle,
$$
where $\tilde h_{I} \in \{h_I, h_I^0\}$.
It is not hard to adapt the bilinear sparse domination argument of model operators \cite[Section 5]{LMOV}
and deduce that for all $f_1, f_2, f_3 \in L^{\infty}_c(\R^{n+m})$ we have
\begin{multline*}
|\langle U_{\omega}(f_1,f_2), f_3\rangle|
\lesssim (1+\max_i k_i) \Lambda_\mathcal{S}(\|f_1\|_{L^{q_1}(\R^m)}, \|f_2\|_{L^{q_2}(\R^m)}, \|f_3\|_{L^{q'}(\R^m)})
\\
:=
(1+\max_i k_i)
\sum_{Q\in\mathcal{S}}|Q| \bla \|f_1\|_{L^{q_1}(\R^m)} \bra_Q \bla \|f_2\|_{L^{q_2}(\R^m)} \bra_Q \bla \|f_3\|_{L^{q'}(\R^m)} \bra_Q
\end{multline*}
for some dyadic grid $\calD^n$ and sparse collection of cubes $\calS \subset \calD^n$ depending on
$f_1$, $f_2$, $f_3$, $q_1$, $q_2$. Here it is important to notice
that $\calS$ and $\calD^n$ do not depend on $\omega$ (see Lacey-Mena \cite{LM} or \cite{LMOV}).
Thus,
\begin{equation}\label{q334r3f}
|\langle T_n \otimes T_m(f_1,f_2), f_3\rangle|
\le
 C \Lambda_\mathcal{S}(\|f_1\|_{L^{q_1}(\R^m)}, \|f_2\|_{L^{q_2}(\R^m)}, \|f_3\|_{L^{q'}(\R^m)}).
\end{equation}

\medskip

\noindent\textbf{Case 1a:} $q_1,q_2<\infty$ and $p_1,p_2<\infty$.

This case follows at once from Corollary \ref{cor:TensorEndpoint} and a standard density argument.

\medskip \noindent\textbf{Case 1b:} $q_1,q_2<\infty$ and $p_1=\infty$ or $p_2=\infty$.

By symmetry we may assume that $p_1=\infty$ and hence $p_2=p$ with $1<p<\infty$.  We proceed by duality and observe first that $T_n^{1*}$ (resp. $T_m^{1*}$), the adjoint with respect the first entries of $T_n$ (resp. $T_m$) is a bilinear Calder\'on-Zygmund operator in $\R^n$ (resp. $\R^m$). Also, one can see that $(T_n \otimes T_m)^{1*}=T_n^{1*}\otimes T_m^{1*}$ and therefore
the already established estimates give
\[
\|(T_n \otimes T_m)^{1*}(f_3,f_2)\|_{L^{1}(\R^n;L^{q_1'}(\R^m))}  \lesssim \|f_3\|_{L^{p'}(\R^n; L^{q'}(\R^m))} \|f_2\|_{L^{p}(\R^n;L^{q_2}(\R^m))}
\]
for all $f_3\in L^{p'}(\R^n; L^{q'}(\R^m))$ and $f_2\in L^{p}(\R^n;L^{q_2}(\R^m))$ since in the current case  we have $1<p',p<\infty$, $1<q',q_2<\infty$, and $\frac1{q_1'}=\frac1{q'}+\frac1{q_2}<1$.

In turn for every $f_1\in {L^{\infty}(\R^n; L^{q_1}(\R^m))}$ and $f_2\in {L^{p}(\R^n; L^{q_2}(\R^m))}$ we can define $T_n \otimes T_m(f_1, f_2) \in L^p(\R^n; L^q(\R^m))$ by setting for all $f_3 \in L^{p'}(\R^n; L^{q'}(\R^m))$
$$
\langle T_n \otimes T_m(f_1,f_2),f_3 \rangle
:= \langle (T_n \otimes T_m)^{1*}(f_3,f_2),f_1 \rangle.
$$
All these eventually show that \eqref{qfwafaerfer:theor} holds and \textbf{Case 1b} is complete.

\medskip

\noindent\textbf{Case 1c:} $q_1=\infty$ or $q_2=\infty$ and $p_1,p_2<\infty$.

By symmetry we may assume that $q_1=\infty$, hence $1< q=q_2<\infty$. Our first goal is to see that the above sparse estimate \eqref{q334r3f} holds for any
 $f_1 \in L^\infty(\R^{n+m})$ and  $f_2, f_3 \in L^\infty_c(\R^{n+m})$. As observed before $ (T_n \otimes T_m)^{1*} (f_3,f_2)= T_n^{1*} \otimes T_m^{1*} (f_3,f_2)$ and
by Case 1a with exponents $\tilde{p}_1=\tilde{p}_2=\tilde{q}_1=\tilde{q}_2=2$ we have
\begin{multline*}
\|(T_n \otimes T_m)^{1*}(f_3,f_2)\|_{L^{1}(\R^{n+m})}
=
\|(T_n \otimes T_m)^{1*}(f_3,f_2)\|_{L^{1}(\R^n;L^{1}(\R^m))}
\\
\lesssim \|f_3\|_{L^{2}(\R^n; L^{2}(\R^m))} \|f_2\|_{L^{2}(\R^n;L^{2}(\R^m))}<\infty.
\end{multline*}
Let $R_1 \subset R_2 \subset \cdots$ be an increasing sequence of rectangles such that $\bigcup_k R_k= \R^{n+m}$ and note that by duality
\begin{multline*}
\langle T_n \otimes T_m (f_1,f_2), f_1 \rangle
=\langle (T_n \otimes T_m)^{1*} (f_3,f_2), f_1 \rangle
\\
=\lim_{k \to \infty} \langle (T_n \otimes T_m)^{1*} (f_3,f_2),1_{R_k} f_1 \rangle
=\lim_{k \to \infty} \langle T_n \otimes T_m (1_{R_k}f_1 , f_2), f_3 \rangle.
\end{multline*}
Hence since all the quantities involved are finite we can choose $k$ (depending on $f_1,f_2,f_3$) so that
\[
|\langle T_n \otimes T_m (f_1, f_2), f_3 \rangle|
\le 2 |\langle T_n \otimes T_m (1_{R_k}f_1, f_2), f_3 \rangle|.
\]
At this point we can invoke \eqref{q334r3f} ---which is valid with $q_1=\infty$, $q_2=q$ as observed above--- for $1_{R_k}f_1, f_2, f_3\in  L^\infty_c(\R^{n+m})$ and find a sparse family $\mathcal{S}$ and a dyadic grid $\mathcal{D}^n$ with $\mathcal{S}\subset\mathcal{D}^n$ (both $\mathcal{S}$ and $\mathcal{D}^n$ depending on $f_1,f_2,f_3$, and on $k$ which ultimately depends on these three functions) such that
\begin{multline*}
|\langle T_n \otimes T_m (f_1, f_2), f_3 \rangle|
\lesssim
\Lambda_\mathcal{S}(\|1_{R_k} f_1\|_{L^{\infty}(\R^m)}, \|f_2\|_{L^{q}(\R^m)}, \|f_3\|_{L^{q'}(\R^m)})
\\
\le
\Lambda_\mathcal{S}(\|f_1\|_{L^{\infty}(\R^m)}, \|f_2\|_{L^{q}(\R^m)}, \|f_3\|_{L^{q'}(\R^m)}).
\end{multline*}
All in one, we have been able to show that for any $f_1 \in L^\infty(\R^{n+m})$, $f_2, f_3 \in L^\infty_c(\R^{n+m})$ there is a sparse domination formula as in \eqref{q334r3f} with a possible larger constant.

To proceed, let $w\in A_\infty(\R^n)$ and
note that for any $f_1 \in L^\infty(\R^{n+m})$ and $f_2\in L^{\infty}_c(\R^{n+m})$ we have
\begin{multline*}
\|T_n \otimes T_m(f_1,f_2)\|_{L^q(w(x_1)dx_1,\R^n; L^q(\R^m))}
=
\|T_n \otimes T_m(f_1,f_2)\|_{L^q(w\otimes 1,\R^{n+m})}
\\
=
\sup
|\langle T_n \otimes T_m(f_1,f_2), f_3\,(w\otimes 1)^{\frac1q}\rangle|,
\end{multline*}
where the sup runs over all $f_3\in L^\infty_c(\R^{n+m})$ with $\|f_3\|_{L^{q'}}=1$. Fix  such a function $f_3$ and write $\tilde{f}_1=\|f_1\|_{L^{\infty}(\R^m)}$, $\tilde{f}_2=\|f_2\|_{L^{q}(\R^m)}$, and $\tilde{f}_3=\|f_3\|_{L^{q_3}(\R^m)}$. By the previous argument we know that there exists a sparse family $\mathcal{S}$ and a dyadic grid $\mathcal{D}^n$ with $\mathcal{S}\subset\mathcal{D}^n$ (both $\mathcal{S}$ and $\mathcal{D}^n$ depending on $f_1,f_2,f_3$) such that
\begin{align*}
&|\langle T_n \otimes T_m(f_1,f_2), f_3\,(w\otimes 1)^{\frac1q}\rangle|
\lesssim
\Lambda_\mathcal{S}(\tilde{f}_1, \tilde{f}_2, \tilde{f}_3 \,w^{\frac1q})
\\
&\qquad=
\sum_{Q\in\mathcal{S}} w(Q)  \bla \tilde{f}_1 \bra_Q \bla \tilde{f}_2\bra_Q \frac1{w(Q)}\int_Q (\tilde{f}_3 w^{-\frac1{q'}}) wdx_1
\\
&\qquad\lesssim
\sum_{Q\in\mathcal{S}} w(E_Q)  \bla \tilde{f}_1 \bra_Q \bla \tilde{f}_2\bra_Q \frac1{w(Q)}\int_Q (\tilde{f}_3 w^{-\frac1{q'}}) wdx_1
\\
&\qquad\lesssim
\int_{\R^n} M(\tilde{f}_1,\tilde{f}_2) M^{\mathcal{D}^n}_{w}  (\tilde{f}_3 w^{-\frac1{q'}})  wdx_1
\\
&\qquad\lesssim
\|M(\tilde{f}_1,\tilde{f}_2)\|_{L^q(w,\R^n)}\|M^{\mathcal{D}^n}_{w}  (\tilde{f}_3 w^{-\frac1{q'}})\|_{L^{q'}(w,\R^n)}
\\
&\qquad
\lesssim
\|M(\tilde{f}_1,\tilde{f}_2)\|_{L^q(w,\R^n)}.
\end{align*}
In the previous computations we have used that $\{E_Q\}_{Q\in\mathcal{S}}$ is the pairwise disjoint family associated with the sparse family $\mathcal{S}$ for which we have $|E_Q|\approx |Q|$. Also, since $w\in A_\infty$ it follows that $w(Q)\approx w(E_Q)$ since $|E_Q|\approx |Q|$. Finally, $M^{\mathcal{D}^n}_{w}$ is the $\mathcal{D}^n$-dyadic Hardy-Littlewood maximal function with underlying measure $wdx_1$ which is bounded in $L^{q'}(w,\R^n)$ for every $1\le q<\infty$. Gathering all the obtained estimates we have concluded that for all $w\in A_\infty(\R^n)$
\begin{multline*}
\big\| \|T_n \otimes T_m(f_1,f_2)\|_{L^q(\R^m)}\big\|_{L^q(w,\R^n)}
=
\|T_n \otimes T_m(f_1,f_2)\|_{L^q(w(x_1)dx_1,\R^n; L^q(\R^m))}
\\
\lesssim
\|M(\tilde{f}_1,\tilde{f}_2)\|_{L^q(w,\R^n)}.
\end{multline*}
Using then $A_\infty$-extrapolation, see  \cite[Theorem 2.1]{CUMP}, we obtain for every $0<r<\infty$
\begin{multline*}
\|T_n \otimes T_m(f_1,f_2)\|_{L^r(\R^n; L^q(\R^m))}
=
\big\| \|T_n \otimes T_m(f_1,f_2)\|_{L^q(\R^m)}\big\|_{L^r(\R^n)}
\\
\lesssim
\|M(\tilde{f}_1,\tilde{f}_2)\|_{L^r(\R^n)}.
\end{multline*}
At this point, given $1<p_1,p_2<\infty$ with $\frac1p=\frac1{p_1}+\frac1{p_2}>0$ we can apply the previous estimate with $r=p$ and the well-known estimates for the bilinear maximal operator (see \cite{LOPTT}) to conclude that
\begin{multline}\label{r43fvr}
\|T_n \otimes T_m(f_1,f_2)\|_{L^p(\R^n; L^q(\R^m))}
\lesssim
\|M(\tilde{f}_1,\tilde{f}_2)\|_{L^p(\R^n)}
\lesssim
\|\tilde{f}_1\|_{L^{p_1}(\R^n)} \|\tilde{f}_2\|_{L^{p_2}(\R^n)}
\\
=
\|f_1\|_{L^{p_1}(\R^n; L^{\infty}(\R^m))} \|f_2\|_{L^{p_2}(\R^{n}; L^{q}(\R^m))}.
\end{multline}
for all $f_1 \in L^\infty(\R^{n+m})$ and $f_2\in L^{\infty}_c(\R^{n+m})$.

Next, given $f_1\in {L^{p_1}(\R^n; L^{\infty}(\R^m))}$  and $f_2\in L^{\infty}_c(\R^{n+m})$ consider $f_1^N(x_1,x_2)= f_1(x_1,x_2)1_{\{x_1: \|f(x_1,\cdot)\|_{L^\infty(\R^m)}<N\}}$ for every $(x_1,x_2)\in\R^{n+m}$ and $N\ge 1$. It is straightforward to see that $\{f_1^N\}_{N\ge 1}$ is Cauchy sequence in
${L^{p_1}(\R^n; L^{\infty}(\R^m))}$ and hence this allows us to define
\[
T_n \otimes T_m(f_1,f_2):=\lim_{N\to\infty} T_n \otimes T_m(f_1^N,f_2)
\]
where the convergence is in ${L^p(\R^n; L^q(\R^m))}$. Since $f_1^N\in L^\infty(\R^{n+m})$ and $f_2\in L^{\infty}_c(\R^{n+m})$  we can invoke \eqref{r43fvr} to see that \eqref{qfwafaerfer:theor} holds for all $f_1\in {L^{p_1}(\R^n; L^{\infty}(\R^m))}$  and $f_2\in L^{\infty}_c(\R^{n+m})$. Next, since $p_2,q<\infty$ we can readily extend this estimate by density to all
$f_1\in {L^{p_1}(\R^n; L^{\infty}(\R^m))}$ and $f_2\in {L^{p_2}(\R^n; L^{q}(\R^m))}$  and this completes the proof of \eqref{qfwafaerfer:theor} in the present case.

\medskip

\noindent\textbf{Case 1d:} $q_1=\infty$ or $q_2=\infty$, and $p_1=\infty$ or $p_2=\infty$.

Again by symmetry we may assume that $q_1=\infty$, hence $1< q=q_2<\infty$. Consider first the case $p_1=\infty$, thus $1<p_2=p<\infty$. As before,
from \textbf{Case 1a} we readily get
\begin{align*}
\|(T_n \otimes T_m)^{1*}(f_3,f_2)\|_{L^{1}(\R^n;L^{1}(\R^m))}
\lesssim
\|f_3\|_{L^{p'}(\R^n; L^{q'}(\R^m))} \|f_2\|_{L^{p}(\R^n;L^{q}(\R^m))},
\end{align*}
for every $f_2\in L^{p}(\R^n;L^{q}(\R^m))$ and $f_3 \in L^{p'}(\R^n; L^{q'}(\R^m))$.
Then for every $f_1\in {L^{\infty}(\R^n; L^{\infty}(\R^m))}$ and $f_2\in {L^{p}(\R^n; L^{q}(\R^m))}$ and we can define $T_n \otimes T_m(f_1, f_2) \in L^p(\R^n; L^q(\R^m))$ by setting for all $f_3 \in L^{p'}(\R^n; L^{q'}(\R^m))$
$$
\langle T_n \otimes T_m(f_1,f_2),f_3 \rangle
:= \langle (T_n \otimes T_m)^{1*}(f_3,f_2),f_1 \rangle.
$$
All these readily imply the desired estimate.

In the second case, that is, when $p_2=\infty$ and hence $1<p_1=p<\infty$ we can show much as before that \textbf{Case 1c} yields
\begin{align*}
\|(T_n \otimes T_m)^{2*}(f_1,f_3)\|_{L^{1}(\R^n;L^{q'}(\R^m))}
\lesssim
\|f_1\|_{L^{p}(\R^n; L^{\infty}(\R^m))}
\|f_3\|_{L^{p'}(\R^n; L^{q'}(\R^m))},
\end{align*}
for every $f_1\in {L^{p}(\R^n; L^{\infty}(\R^m))}$  and $f_3 \in L^{p'}(\R^n; L^{q'}(\R^m))$. As a consequence, for any
$f_1\in {L^{p}(\R^n; L^{\infty}(\R^m))}$ and $f_2\in {L^{\infty}(\R^n; L^{q}(\R^m))}$ and we can define $T_n \otimes T_m(f_1, f_2) \in L^p(\R^n; L^q(\R^m))$ by setting for all $f_3 \in L^{p'}(\R^n; L^{q'}(\R^m))$
$$
\langle T_n \otimes T_m(f_1,f_2),f_3 \rangle
:= \langle (T_n \otimes T_m)^{2*}(f_1,f_3),f_2 \rangle.
$$
This gives the desired estimate in the present scenario completing \textbf{Case 1d} and hence \textbf{Case 1}.

\medskip

\noindent\textbf{Case 2:} $1<p_1,p_2\le\infty$ with $\frac1p=\frac1{p_1}+\frac1{p_2}>0$ and $1<q_1,q_2<\infty$.

By Corollary \ref{cor:TensorEndpoint} and a standard density argument we only need to treat the cases where either $p_1=\infty$ or $p_2=\infty$. By symmetry we just explain the  case $p_1 = \infty$, $1<p_2 = p<\infty$. Let $f_1 \in L^{\infty}(\R^n; L^{q_1}(\R^m))$ and $f_2 \in L^{\infty}_c(\R^{n+m})$. Pick $1<\tilde{q}_2<\infty$ large enough so that $\frac1{\tilde{q}}=\frac1{q_1}+\frac1{\tilde{q}_2}<1$. Since $f_2 \in L^{\infty}_c(\R^{n+m})\subset L^{p}(\R^n; L^{\tilde{q}_2}(\R^m))$, our choices of exponents allows us to invoke \textbf{Case 1} we know that $T_n \otimes T_m(f_1, f_2)$ is a well-defined function in $L^p(\R^n; L^{\tilde{q}}(\R^m))$ ---we would like to emphasize that this is the only place in this proof on which we use \textbf{Case 1} and it is done just qualitatively and with the exponents $q_i$'s being finite.
At this point we proceed as in the proof of Corollary \ref{cor:TensorEndpoint} and define $F$, $F_1$ and $F_2$ as in \eqref{afawfwf}. Our goal is to show the validity of \eqref{22qffd}. For $\vec{w}$ as above,
we may assume that $ \|f_1 (w_1\otimes 1)\|_{L^{q_1}(\R^{n+m})}=\| F_1 w_1 \|_{L^{q_1}(\R^n)}$ and $\|f_2 (w_2\otimes 1)\|_{L^{q_2}(\R^{n+m})}=\| F_2 w_2 \|_{L^{q_2}(\R^n)}$ are finite, otherwise there is nothing to prove. That means that we can invoke again Theorem \ref{thm:WeightedTensor}, and \eqref{22qffd} follows in the same manner. We can then extrapolate and in the special case $w_1 \equiv 1$ and $w_2 \equiv 1$ obtain \eqref{eq:eq1afvrv} in the present scenario:
\[
\| T_n \otimes T_m (f_1,f_2) \|_{L^{p}(\R^n; L^{q}(\R^m))}
\lesssim \|f_1 \|_{L^{\infty}(\R^n; L^{q_1}(\R^m))}
\| f_2 \|_{L^{p}(\R^n; L^{q_2}(\R^m))}.
\]
The important fact is that this estimate holds for any $f_1 \in L^{\infty}(\R^n; L^{q_1}(\R^m))$ and $f_2 \in L^{\infty}_c(\R^{n+m})$. In turn, since  $1<p_2,q_2<\infty$ we can run a standard density argument to conclude that it also holds for all $f_1 \in L^{\infty}(\R^n; L^{q_1}(\R^m))$ and $f_2 \in L^{p}(\R^n; L^{q_2}(\R^m))$. This completes the proof. \qed

\end{document}